\theoremstyle{plain}
\newtheorem{dummy}{anything}[section]
\newtheorem{theorem}[dummy]{Theorem}
\newtheorem{lemma}[dummy]{Lemma}
\newtheorem{proposition}[dummy]{Proposition}
\newtheorem{corollary}[dummy]{Corollary}
\theoremstyle{definition}
\newtheorem{definition}[dummy]{Definition}
\newtheorem{example}[dummy]{Example}
\newtheorem{remark}[dummy]{Remark}
\newtheorem*{acknowledgements}{Acknowledgements}
\begin{document}

\title{Contact $(+1)$-surgeries on rational homology 3-spheres}

\author{Fan Ding, Youlin Li and Zhongtao Wu}

\address{School of Mathematical Sciences and LMAM, Peking University, Beijing 100871, China}
\email{dingfan@math.pku.edu.cn}

\address{School of Mathematical Sciences, Shanghai Jiao Tong University, Shanghai 200240, China}
\email{liyoulin@sjtu.edu.cn}

\address{Department of Mathematics, The Chinese University of Hong Kong, Shatin, Hong Kong}
\email{ztwu@math.cuhk.edu.hk}

\subjclass[2000]{}

\begin{abstract}
In this paper, sufficient conditions for contact $(+1)$-surgeries along Legendrian knots in contact rational homology 3-spheres to have vanishing contact invariants or to be overtwisted are given. They can be applied to study contact $(\pm1)$-surgeries along Legendrian links in the standard contact 3-sphere. We also obtain a sufficient condition for contact $(+1)$-surgeries along Legendrian two-component links in the standard contact 3-sphere to be overtwisted via their front projections.
\end{abstract}

\maketitle

\section{Introduction}\label{sec: intro}

There is a dichotomy of contact structures on 3-manifolds: tight and overtwisted. Given a contact 3-manifold $(Y,\xi)$, it is a fundamental question to ask whether it is tight or overtwisted. In \cite{OSzContact}, Ozsv\'{a}th and Szab\'{o} introduced contact invariants $c(\xi)\in \widehat{HF}(-Y)$ and its image $c^{+}(\xi)\in HF^{+}(-Y)$, and proved that if $(Y,\xi)$ is overtwisted then $c(\xi)$ vanishes. Moreover, Ghiggini proved that if $(Y,\xi)$ is strongly symplectically fillable then $c^{+}(\xi)$, and hence $c(\xi)$,  are non-trivial \cite[Theorem 2.13]{g1}. So it is crucial to determine whether the contact invariant is trivial or not. In \cite{dg}, the first author and Geiges proved that any closed connected contact 3-manifold can be obtained by contact surgery on the standard contact 3-sphere $(S^{3},\xi_{st})$ along a Legendrian link $\mathbb{L}_{1}\cup \mathbb{L}_{2}$ with coefficients $+1$ for each component of $\mathbb{L}_1$ and $-1$ for each component of $\mathbb{L}_2$.  This leads us to study the tightness and contact invariant of $(Y,\xi)$ through its contact $(\pm1)$-surgery diagram along a Legendrian link in  $(S^{3},\xi_{st})$. If $\mathbb{L}_1$ is empty, then $(Y,\xi)$ is Stein fillable, and $c(\xi)$ is nontrivial, and hence $(Y,\xi)$ is tight. So we consider the case that  $\mathbb{L}_1$ is non-empty, namely there are contact $(+1)$-surgeries involved in the surgery. In \cite{dlw}, the authors studied contact $(+1)$-surgeries along Legendrian two-component links in $(S^{3},\xi_{st})$.

In many situations, a problem related to contact $(\pm1)$-surgery along a Legendrian link in  $(S^{3},\xi_{st})$ can be reduced to a problem related to contact $(+1)$-surgery along  a Legendrian link in a contact rational homology 3-sphere. This may occur, for example, when contact $(-1)$-surgery along the sublink $\mathbb{L}_2$ yields a contact rational homology 3-sphere.

For a Legendrian knot $L$ in the standard contact 3-sphere $(S^3, \xi_{st})$, whether or not the result of contact $\frac{p}{q}$-surgery along $L$ has non-vanishing contact invariant has been well studied by Lisca and Stipsicz \cite{ls3, ls1}, Golla \cite{gol}, and Mark and Tosun \cite{mt}, etc. Recall that the contact invariant is natural under the cobordism induced by contact $(+1)$-surgery \cite{OSzContact}. Thus, if $(Y,\xi)$ is a contact 3-manifold whose contact invariant vanishes, then the result of contact $(+1)$-surgery along any Legendrian knot in $(Y,\xi)$ has vanishing contact invariant as well. By Theorem 1.2 in \cite{w} and Proposition 8 in \cite{dg1}, the result of contact $(+1)$-surgery along any Legendrian knot in an overtwisted closed connected contact 3-manifold $(Y,\xi)$ is overtwisted. In this paper, we are mainly concerned with the contact invariant and overtwistedness of the result of contact $(+1)$-surgery along a Legendrian knot in a contact rational homology 3-sphere.

The last two authors introduced an invariant $\tau^{\ast}_{c(\xi)}(Y, K)$ for a rationally null-homologous knot $K$ in a contact 3-manifold $(Y,\xi)$ with non-vanishing contact invariant $c(\xi)$ \cite{lw}, and proved that this invariant gives an upper bound for the sum of the rational Thurston-Bennequin invariant and the absolute value of the rational rotation number of all Legendrian knots isotopic to $K$, i.e. $$tb_{\mathbb{Q}}(L)+|rot_{\mathbb{Q}}(L)|\leq 2\tau^{\ast}_{c(\xi)}(Y, K)-1,$$
where $L$ is a Legendrian knot in $(Y,\xi)$ isotopic to $K$. This is a generalization of the inequalities appeared in \cite{b}, \cite{e}, \cite{p}, \cite{h}, etc. We give a sufficient condition for the result of contact $(+1)$-surgery having vanishing contact invariant.
Let $(Y_{+1}(L), \xi_{+1}(L))$ denote the result of contact $(+1)$-surgery on $(Y, \xi)$ along $L$.

\begin{theorem}\label{Theorem:Main1}
Suppose $K$ is a knot in a rational homology 3-sphere $Y$, and $\xi$ is a contact structure on $Y$ with nontrivial contact invariant $c(\xi)\in \widehat{HF}(-Y)$. Let $L$ be a Legendrian knot in $(Y,\xi)$ isotopic to $K$.  Then the  contact invariant $c(\xi_{+1}(L))$ vanishes if
$$tb_{\mathbb{Q}}(L)+|rot_{\mathbb{Q}}(L)|<2\tau^{\ast}_{c(\xi)}(Y,K)-1.$$
\end{theorem}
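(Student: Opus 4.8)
The plan is to realize the contact invariant $c(\xi_{+1}(L))$ as the image of $c(\xi)$ under the map induced by the contact surgery cobordism, and then to show that the strict inequality forces that image to vanish because $c(\xi)$ is supported too deep in the knot filtration. Concretely, the threshold $2\tau^{\ast}_{c(\xi)}(Y,K)-1$ measures exactly the filtration level at which $c(\xi)$ becomes visible, and $tb_{\mathbb{Q}}(L)+|rot_{\mathbb{Q}}(L)|$ measures the level that the surgery cobordism map can access, so the proof amounts to comparing these two numbers.

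First I would set up the cobordism. Contact $(+1)$-surgery along $L$ is topological surgery along $K$ with framing determined by $tb_{\mathbb{Q}}(L)$, giving a $2$-handle cobordism; reversing orientation yields a cobordism $W$ from $-Y$ to $-Y_{+1}(L)$. By the naturality of the contact invariant under contact surgery cobordisms (Ozsv\'ath--Szab\'o, as exploited by Lisca--Stipsicz), the induced map $F_{W}\colon \HF(-Y)\to \HF(-Y_{+1}(L))$ carries $c(\xi)$ to $c(\xi_{+1}(L))$, after restricting to the distinguished $\mathrm{Spin}^{c}$ structure coming from the handle attachment. Thus it suffices to prove $F_{W}(c(\xi))=0$ under the hypothesis; note that a nonzero class may well map to zero, which is consistent with overtwisted examples arising from over-stabilized knots.

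Next I would identify the relevant $\mathrm{Spin}^{c}$ structure $\mathfrak{s}$ on $W$ and its grading behavior. Its first Chern number is governed by $rot_{\mathbb{Q}}(L)$ while the framing is governed by $tb_{\mathbb{Q}}(L)$, and combining these controls which Alexander level of $\CFK(-Y,K)$ the map $F_{W,\mathfrak{s}}$ can reach. Here I would invoke the (rational) surgery / mapping-cone description of $F_{W,\mathfrak{s}}$, so that it is identified with a map factoring through a specific subquotient of the knot filtration. Locating $c(\xi)$ via the definition of $\tau^{\ast}_{c(\xi)}(Y,K)$ as the minimal Alexander level supporting $c(\xi)$, the output $F_{W,\mathfrak{s}}(c(\xi))$ is then forced to vanish whenever the accessible level, which is $\frac{1}{2}\bigl(tb_{\mathbb{Q}}(L)+|rot_{\mathbb{Q}}(L)|+1\bigr)$ up to conventions, lies strictly below $\tau^{\ast}_{c(\xi)}(Y,K)$. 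Rearranging yields exactly the hypothesis $tb_{\mathbb{Q}}(L)+|rot_{\mathbb{Q}}(L)|<2\tau^{\ast}_{c(\xi)}(Y,K)-1$, and hence $c(\xi_{+1}(L))=0$. The absolute value reflects that either sign of the rotation number gives a valid constraint, and one keeps the stronger of the two.

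The hard part will be the identification in the second step: making the passage from the contact-surgery cobordism map to a filtered map on $\CFK(-Y,K)$ rigorous in the rationally null-homologous setting, where $\mathrm{Spin}^{c}$ structures, framings, and Maslov gradings are all rational. This requires a rational surgery formula together with a careful matching of the contact-geometric quantities $tb_{\mathbb{Q}}(L)$ and $rot_{\mathbb{Q}}(L)$ with the homological data $c_{1}(\mathfrak{s})$ and the surgery framing, so that the numerical threshold emerges as precisely $2\tau^{\ast}_{c(\xi)}(Y,K)-1$ rather than an off-by-one or off-by-a-rational-correction variant. Getting these conventions to align exactly, rather than up to an ambiguous constant, is where I expect the bulk of the technical work to lie.
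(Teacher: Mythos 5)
Your plan follows the same route as the paper's proof, and the ingredients you list are the right ones: the paper realizes $c(\xi_{+1}(L))$ as $F_{-W,\mathfrak{t}_2}(c(\xi))$ for a single distinguished Spin$^c$ structure $\mathfrak{t}_2$ (note that this single--Spin$^c$ refinement is due to Baldwin and Mark--Tosun, not merely to Ozsv\'ath--Szab\'o/Lisca--Stipsicz naturality, which only controls the map summed over Spin$^c$ structures); it proves $\langle c_1(\mathfrak{t}_2),[F_{qC}]\rangle=\pm q\cdot rot_{\mathbb{Q}}(L)$ by a $d_3$-invariant computation (Lemma~\ref{mt}) and $pq-cr=q(tb_{\mathbb{Q}}(L)+1)$ (Lemma~\ref{lemma: tb}); and it feeds these into the rational surgery formula (Theorems~\ref{iso} and~\ref{rao}) to pin down the Alexander grading $2A(\underline{\mathfrak{s}})=-tb_{\mathbb{Q}}(L)\pm rot_{\mathbb{Q}}(L)-1$ (Corollary~\ref{cor:cone}). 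One clarification on your last remark: the sign in Lemma~\ref{mt} is genuinely undetermined, so the hypothesis must involve $|rot_{\mathbb{Q}}(L)|$ because the inequality has to hold for whichever sign occurs, not for a sign of your choosing.

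The gap is in your key step. Under the mapping cone identification, $F_{-W,\mathfrak{t}_2}$ corresponds to the \emph{inclusion} $B_{\underline{\mathfrak{s}}}\hookrightarrow\widehat{\mathbb{X}}_{[\underline{\mathfrak{s}}]}$; it is not a map factoring through a subquotient of the knot filtration, so you cannot conclude vanishing merely by showing that $c(\xi)$ dies at some accessible filtration level. By exactness of the mapping cone sequence, the image of $c(\xi)$ in $H_*(\widehat{\mathbb{X}}_{[\underline{\mathfrak{s}}]})$ vanishes if and only if $c(\xi)\in \text{Im}(\Phi_*)$, and the witness for this is a cycle $c\in A_{\underline{\mathfrak{s}}}=C_{\underline{\mathfrak{s}}}\{\max(i,j)=0\}$ satisfying \emph{two} conditions: $v_{\underline{\mathfrak{s}}\ast}[c]=c(\xi)$ and $h_{\underline{\mathfrak{s}}}(c)=0$. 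The paper extracts both from the single numerical hypothesis: the inequality gives $A(\underline{\mathfrak{s}})>\tau_{c(\xi)}(-Y,K)=-\tau^{\ast}_{c(\xi)}(Y,K)$, so by the definition of $\tau_{c(\xi)}(-Y,K)$ there is a cycle representing $c(\xi)$ supported in $C_{\underline{\mathfrak{s}}}\{i=0,\ j\leq \tau_{c(\xi)}(-Y,K)-A(\underline{\mathfrak{s}})\}$, i.e.\ at strictly negative vertical level; such a cycle is automatically a cycle in $A_{\underline{\mathfrak{s}}}$ with $v_{\underline{\mathfrak{s}}}(c)=c$, and it is annihilated by $h_{\underline{\mathfrak{s}}}$ precisely because it lies in the region $j<0$. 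Without the $h$-vanishing, your level comparison does not force $F_{-W,\mathfrak{t}_2}(c(\xi))=0$; with it, your plan becomes exactly the paper's proof.
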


 Let $(S^3(\mathbb{L}_{1}^{+}\cup \mathbb{L}_{2}^{-}),\xi_{st}(\mathbb{L}_1^+\cup\mathbb{L}_2^-))$ denote the contact 3-manifold obtained by contact surgery on $(S^{3},\xi_{st})$ along a Legendrian link $\mathbb{L}_{1}\cup \mathbb{L}_{2}$ with coefficients $+1$ for each component of $\mathbb{L}_1$ and $-1$ for each component of $\mathbb{L}_2$.  \"{O}zba\u{g}ci showed in \cite{Ozb} that if some component of $\mathbb{L}_{1}$ contains an isolated stabilized arc which does not tangle with any other component of $\mathbb{L}_{1}\cup \mathbb{L}_{2}$, then $(S^3(\mathbb{L}_1^+\cup\mathbb{L}_2^-),\xi_{st}(\mathbb{L}_{1}^{+}\cup \mathbb{L}_{2}^{-}))$ is overtwisted. In fact, thanks to Theorem 1.2 in \cite{w}, the condition in \"{O}zba\u{g}ci's result can be slightly relaxed  to be that some component of $\mathbb{L}_{1}$ contains an isolated stabilized arc which does not tangle with any component of $\mathbb{L}_{2}$. Applying Theorem~\ref{Theorem:Main1} we obtain a result similar to that of \"{O}zba\u{g}ci. Here we consider isolated Legendrian connected summands. See Figure~\ref{figure:connsum}. We refer the reader to \cite{eh} for Legendrian connected sum.

\begin{proposition}\label{Pro0}
Let $\mathbb{L}_{1}\cup \mathbb{L}_{2}\subset (S^{3},\xi_{st})$ be an oriented Legendrian link.  If the contact 3-manifold  $S^3(\mathbb{L}_{2}^{-})$ is a rational homology 3-sphere, and there exists a front projection of $\mathbb{L}_{1}\cup \mathbb{L}_{2}$ such that a component $L_1$ of $\mathbb{L}_{1}$ contains an isolated connected summand $L_3$ which does not tangle with $\mathbb{L}_{2}$ and satisfies
$$tb(L_3)+|rot(L_3)|<2\tau(L_3)-1,$$ then the contact invariant $c(\xi_{st}(\mathbb{L}_{1}^{+}\cup \mathbb{L}_{2}^{-}))$ vanishes.
\end{proposition}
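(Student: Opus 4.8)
The plan is to reduce the statement to a single application of Theorem~\ref{Theorem:Main1}. Since contact surgeries along the disjoint components of a Legendrian link can be performed in any order, I would first carry out the contact $(-1)$-surgeries along all components of $\mathbb{L}_{2}$, producing the contact manifold $(Y,\xi):=(S^3(\mathbb{L}_2^-),\xi_{st}(\mathbb{L}_2^-))$. By hypothesis $Y$ is a rational homology 3-sphere, and since contact $(-1)$-surgery is Legendrian (Weinstein) surgery, $(Y,\xi)$ is Stein fillable; hence $c(\xi)\neq 0$ by Ghiggini's theorem \cite{g1}. The remaining surgery describing $(S^3(\mathbb{L}_1^+\cup\mathbb{L}_2^-),\xi_{st}(\mathbb{L}_1^+\cup\mathbb{L}_2^-))$ is then contact $(+1)$-surgery along the components of $\mathbb{L}_1$, now regarded as Legendrian knots in $(Y,\xi)$. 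I would perform the surgery along the distinguished component $L_1$ \emph{first}, so that $(Y,\xi)$ still has nonvanishing contact invariant at that stage.

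The heart of the argument is to verify that $L_1\subset(Y,\xi)$ meets the hypothesis of Theorem~\ref{Theorem:Main1}. Write $K_1$ for the knot type of $L_1$ in $Y$. Because $L_3$ is an isolated connected summand of $L_1$ that does not tangle with $\mathbb{L}_2$, its front lives in a standard ball $B\subset S^3$ disjoint from $\mathbb{L}_2$, and this ball survives the surgery as a standard ball in $Y$. Thus in $(Y,\xi)$ there is a Legendrian connected sum decomposition $L_1=L_1'\# L_3$, where $L_3\subset(S^3,\xi_{st})$ and $L_1'$ is a Legendrian knot in $(Y,\xi)$ of some knot type $K_1'$ with $K_1=K_1'\# L_3$. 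Since the summand $L_3$ lies in an integrally null-homologous ball, the rational invariants are additive in the usual way,
\begin{align*}
tb_{\mathbb{Q}}(L_1)&=tb_{\mathbb{Q}}(L_1')+tb(L_3)+1,\\
rot_{\mathbb{Q}}(L_1)&=rot_{\mathbb{Q}}(L_1')+rot(L_3),
\end{align*}
and I would likewise invoke the connected sum (super)additivity of the invariant of \cite{lw}, namely $\tau^{\ast}_{c(\xi)}(Y,K_1)\ge\tau^{\ast}_{c(\xi)}(Y,K_1')+\tau(L_3)$.

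With these in hand the verification is a short computation. Combining the additivity above with the triangle inequality, the general bound $tb_{\mathbb{Q}}(L_1')+|rot_{\mathbb{Q}}(L_1')|\le 2\tau^{\ast}_{c(\xi)}(Y,K_1')-1$ recalled in the introduction, and the hypothesis $tb(L_3)+|rot(L_3)|<2\tau(L_3)-1$, gives
\begin{align*}
tb_{\mathbb{Q}}(L_1)+|rot_{\mathbb{Q}}(L_1)|
&\le \big(tb_{\mathbb{Q}}(L_1')+|rot_{\mathbb{Q}}(L_1')|\big)+tb(L_3)+|rot(L_3)|+1\\
&< \big(2\tau^{\ast}_{c(\xi)}(Y,K_1')-1\big)+\big(2\tau(L_3)-1\big)+1\\
&\le 2\tau^{\ast}_{c(\xi)}(Y,K_1)-1.
\end{align*}
Theorem~\ref{Theorem:Main1} then yields $c(\xi_{+1}(L_1))=0$. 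Finally, performing the remaining contact $(+1)$-surgeries along the other components of $\mathbb{L}_1$ preserves the vanishing of the contact invariant, as recalled in the introduction (a consequence of the naturality of $c$ under the contact $(+1)$-surgery cobordism \cite{OSzContact}); hence $c(\xi_{st}(\mathbb{L}_1^+\cup\mathbb{L}_2^-))=0$.

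I expect the main obstacle to be the connected sum (super)additivity of $\tau^{\ast}_{c(\xi)}$ used above. Unlike the purely topological additivity of $tb_{\mathbb{Q}}$ and $rot_{\mathbb{Q}}$, this requires understanding how the filtered contact class behaves under the K\"unneth splitting $\CFK(Y\# S^3,K_1'\# L_3)\simeq\CFK(Y,K_1')\otimes\CFK(S^3,L_3)$, with $c(\xi)$ factoring off the standard generator of the $S^3$ factor. Establishing at least the inequality $\tau^{\ast}_{c(\xi)}(Y,K_1)\ge\tau^{\ast}_{c(\xi)}(Y,K_1')+\tau(L_3)$ is the one nontrivial input, and it is where the definition of $\tau^{\ast}$ from \cite{lw} must be used directly.
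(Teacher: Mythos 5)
Your proposal is correct and follows essentially the same route as the paper's proof: reduce to a single component of $\mathbb{L}_1$ via naturality of the contact invariant under contact $(+1)$-surgery, decompose $(S^3(\mathbb{L}_2^-),L_1)$ as a Legendrian connected sum with the $(S^3,\xi_{st})$-summand $L_3$, combine the bound of \cite[Theorem 1.1]{lw} for the other summand with the hypothesis on $L_3$ and the additivity of $tb_{\mathbb{Q}}$, $rot_{\mathbb{Q}}$ and $\tau^{\ast}$, and then apply Theorem~\ref{Theorem:Main1}. The one input you flag as the ``main obstacle'' --- (super)additivity of $\tau^{\ast}_{c(\xi)}$ under connected sum --- is precisely Proposition~\ref{prop: additivity} (a corollary of \cite[Proposition 2.4]{lw}), which holds with equality once one identifies $\tau^{\ast}_{c(\xi_{st})}(S^3,L_3)=\tau(L_3)$, so no new argument is needed there.
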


\begin{figure}[htb]
\begin{overpic}
{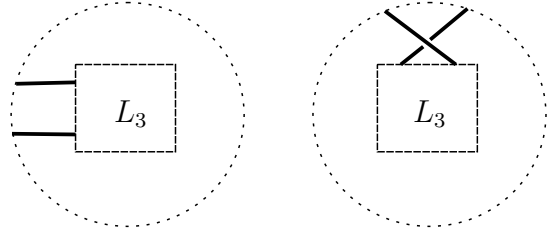}
\put(40, 40){$L_3$}
\put(153, 40){$L_3$}
\end{overpic}
\caption{Isolated Legendrian connected sum.}
\label{figure:connsum}
\end{figure}

On the other hand, we remark that the second part of \cite[Proposition 1.4]{ls1} can be generalized to Legendrian knots in contact L-spaces.

\begin{proposition}\label{Prop:tb}
Suppose $(Y,\xi)$ is a contact L-space, and $L$ is a Legendrian knot in $(Y,\xi)$. If $tb_{\mathbb{Q}}(L)<-1$, then the contact invariant $c^{+}(\xi_{+1}(L))$ vanishes.
\end{proposition}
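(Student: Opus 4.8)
The plan is to reduce the statement to a computation with the cobordism map induced by contact $(+1)$-surgery, and then to exploit the vanishing of the $HF^{\infty}$-map for a cobordism with positive $b_2^+$. Write $(Y',\xi')=(Y_{+1}(L),\xi_{+1}(L))$, and let $W$ be the cobordism from $Y$ to $Y'$ obtained by attaching the contact $(+1)$-surgery $2$-handle along $L$. By the naturality of the contact invariant under contact $(+1)$-surgery (Ozsv\'ath--Szab\'o), the map $F^{+}_{X}\colon HF^{+}(-Y)\to HF^{+}(-Y')$ induced by the orientation reversal $X:=-W$, regarded as a cobordism from $-Y$ to $-Y'$, carries $c^{+}(\xi)$ to $c^{+}(\xi')$ up to sign. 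Thus it suffices to show that $F^{+}_{X}$ annihilates $c^{+}(\xi)$.

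First I would pin down the topology of $X$. Since an $L$-space is a rational homology $3$-sphere, $Y$ is a rational homology sphere and $L$ is rationally null-homologous, so $H_{2}(W;\mathbb{R})$ is one-dimensional and the self-intersection of its generator equals the surgery framing measured against the rational Seifert framing, namely $tb_{\mathbb{Q}}(L)+1$. The hypothesis $tb_{\mathbb{Q}}(L)<-1$ gives $tb_{\mathbb{Q}}(L)+1<0$, so this rank-one form is negative definite; hence $W$ is negative definite and $X=-W$ is positive definite. In particular $b_{2}^{+}(X)=1>0$.

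The heart of the argument is then a standard consequence of the positivity of $b_{2}^{+}$. For any cobordism with $b_{2}^{+}>0$ the induced map $F^{\infty}_{X,s}$ on $HF^{\infty}$ vanishes for every spin$^{c}$ structure $s$. Feeding this into the commuting square relating $F^{\infty}_{X}$, $F^{+}_{X}$ and the projections $HF^{\infty}\to HF^{+}$, one concludes that $F^{+}_{X}$ vanishes on the image of $HF^{\infty}(-Y)\to HF^{+}(-Y)$. Here the contact $L$-space hypothesis enters decisively: since $-Y$ is again an $L$-space, $HF^{+}_{red}(-Y)=0$, so $HF^{+}(-Y)$ coincides with the image of $HF^{\infty}(-Y)$. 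As $c^{+}(\xi)$ lies in this image, we obtain $c^{+}(\xi')=F^{+}_{X}\bigl(c^{+}(\xi)\bigr)=0$. Note that no case distinction according to whether $c(\xi)$ itself vanishes is needed, since the map kills all of $HF^{+}(-Y,s_{\xi})$ uniformly.

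The main obstacle is really the bookkeeping in the first two steps: one must fix conventions so that the naturality map is induced by $X=-W$ rather than by a negative-definite cobordism (for which $F^{\infty}$ need not vanish), and verify that the rank-one intersection form over $\mathbb{Q}$ is genuinely definite of the asserted sign precisely when $tb_{\mathbb{Q}}(L)<-1$ (the threshold $tb_{\mathbb{Q}}(L)=-1$ yielding the degenerate framing $0$, where $b_{2}^{+}(X)=0$ and the argument correctly fails). Once the positivity $b_{2}^{+}(X)=1$ is secured, the remaining steps are formal.
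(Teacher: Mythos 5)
Your proposal is correct and follows essentially the same route as the paper's own proof: naturality of $c^{+}$ under the $(+1)$-surgery cobordism, positive definiteness of $-W$ deduced from $tb_{\mathbb{Q}}(L)+1<0$ (the paper computes the self-intersection as $-q^{2}(tb_{\mathbb{Q}}(L)+1)$ via Lemma~\ref{lemma: tb} and a lemma of Mark--Tosun, but only the sign matters), the vanishing of $F^{\infty}$ for cobordisms with $b_{2}^{+}>0$ from Ozsv\'ath--Szab\'o, and surjectivity of $HF^{\infty}(-Y)\to HF^{+}(-Y)$ coming from the L-space hypothesis. The only differences are bookkeeping conventions, not substance.
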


As mentioned earlier, this result may be used to study certain contact $(\pm1)$-surgeries along Legendrian links in $(S^3, \xi_{st})$.

\begin{corollary}\label{cor0}
Let $\mathbb{L}=L_{1}\cup L_{2}$ be a Legendrian two-component link in $(S^{3}, \xi_{st})$ whose two components have linking number $l$. Suppose that $L_2$ is a Legendrian L-space knot and $l^2>2g(L_2)(tb(L_1)+1)$, where $g(L_2)$ denotes the genus of $L_2$.  Then the contact invariant $c^{+}(\xi_{st}(\mathbb{L}^{+}))$ vanishes.
\end{corollary}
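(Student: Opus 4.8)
The plan is to reduce this two-component problem to a single contact $(+1)$-surgery performed inside a contact L-space, and then invoke Proposition~\ref{Prop:tb}. Since $L_1$ and $L_2$ are disjoint, the two contact $(+1)$-surgeries are supported in disjoint neighborhoods and may be carried out in either order. Thus $\xi_{st}(\mathbb{L}^+)$ is the result of contact $(+1)$-surgery along $L_1$ inside the contact manifold $(Y',\xi')$ obtained by first performing contact $(+1)$-surgery along $L_2$. I would therefore analyze $(Y',\xi')$ and the image of $L_1$ in it, and then apply Proposition~\ref{Prop:tb} with the knot $L_1$.

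First I would show that $(Y',\xi')$ is a contact L-space. Because $L_2$ is a Legendrian L-space knot, its underlying knot is fibered with $tb_{\max}(L_2)=2g(L_2)-1$; taking $L_2$ to realize this maximal Thurston--Bennequin number, the smooth coefficient of the contact $(+1)$-surgery equals $tb(L_2)+1=2g(L_2)$. Since an L-space knot admits an L-space surgery for every coefficient $\geq 2g-1$ and $2g(L_2)\geq 2g(L_2)-1$, the underlying manifold $Y'=S^3_{2g(L_2)}(L_2)$ is an L-space; hence $(Y',\xi')$ is a contact L-space.

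Next I would compute the rational Thurston--Bennequin invariant of $L_1$ viewed in $Y'$. Surgery on the linked component $L_2$ twists the rational Seifert framing of $L_1$, and the standard framing-change formula gives $tb_{\mathbb{Q}}(L_1)=tb(L_1)-l^2/n$ with $n=2g(L_2)>0$. The hypothesis $l^2>2g(L_2)(tb(L_1)+1)$ is exactly the inequality $l^2/(2g(L_2))>tb(L_1)+1$, which reads $tb_{\mathbb{Q}}(L_1)<-1$. Proposition~\ref{Prop:tb} then yields $c^{+}(\xi_{st}(\mathbb{L}^{+}))=c^{+}((\xi')_{+1}(L_1))=0$, as desired.

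The two inputs I have used as black boxes are where the real work lies. The first is the fact that an L-space knot satisfies $tb_{\max}=2g-1$, which pins the surgery coefficient at $2g(L_2)$ and converts the hypothesis into the clean bound; this rests on L-space knots being fibered and supporting the tight contact structure. The second, and the main technical point, is the framing-change formula $tb_{\mathbb{Q}}(L_1)=tb(L_1)-l^2/n$ describing how the rational contact framing of $L_1$ transforms under the surgery on $L_2$, whose verification requires identifying the order of $[L_1]$ in $H_1(Y')$, constructing a rational Seifert surface for $L_1$ in $Y'$, and carrying out the intersection computation that produces the correction term $-l^2/n$. A minor additional point is the precise meaning of ``contact L-space'' in Proposition~\ref{Prop:tb}: should it require $c(\xi')\neq0$ and $\xi'$ happen to have vanishing invariant, then $c(\xi_{st}(\mathbb{L}^{+}))=0$ already follows from the naturality of the contact invariant under contact $(+1)$-surgery, so the conclusion holds in that case as well.
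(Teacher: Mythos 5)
Your overall strategy---perform the contact $(+1)$-surgery along $L_2$ first, then apply Proposition~\ref{Prop:tb} to the image of $L_1$ in the resulting contact manifold---is exactly the paper's strategy, and in the case you actually treat, your computation (via the framing-change formula, which is the paper's Lemma~\ref{tb and rot} generalizing \cite[Lemma 3.1]{go}) agrees with the paper's. But there is a genuine gap: the step ``taking $L_2$ to realize this maximal Thurston--Bennequin number'' is not available to you. $L_2$ is a \emph{given} Legendrian representative, with whatever $tb(L_2)$ it happens to have; you cannot renormalize it, and the smooth surgery coefficient is $tb(L_2)+1$, not $2g(L_2)$. (Moreover, the claim that L-space knots have maximal Thurston--Bennequin invariant $2g-1$ is only the Lidman--Sivek conjecture \cite[Conjecture 1.19]{ls2} cited in the paper; what is actually known is the Bennequin bound $tb(L_2)\le 2g(L_2)-1$.) When $tb(L_2)<2g(L_2)-1$ your argument collapses: if $tb(L_2)+1<2g(L_2)-1$ then $S^3_{tb(L_2)+1}(L_2)$ is \emph{not} an L-space (and if $tb(L_2)+1=0$ it is not even a rational homology sphere), so Proposition~\ref{Prop:tb} cannot be invoked; and if $tb(L_2)+1\le 0$ the passage from the hypothesis $l^2>2g(L_2)(tb(L_1)+1)$ to $tb_{\mathbb{Q}}<-1$ also fails, because the sign of the correction term $-l^2/(tb(L_2)+1)$ flips.

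The paper closes exactly these cases with two arguments your proof is missing. If $L_2$ is the unknot, it invokes \cite[Theorem 1.1]{dlw}. If $L_2$ is nontrivial and $tb(L_2)<2g(L_2)-1=2\tau(L_2)-1$, it invokes Golla's theorem \cite[Theorem 1.1]{gol} to conclude that $c^{+}(\xi_{st}(L_2^{+}))$ already vanishes, and then naturality of $c^{+}$ under the cobordism induced by the remaining contact $(+1)$-surgery along $L_1$ kills the invariant of $\xi_{st}(\mathbb{L}^{+})$. Your closing remark---that if the invariant of the intermediate structure vanishes then naturality finishes---identifies the right mechanism, but it does not repair the gap: nothing in your argument shows that in the low-$tb$ case this invariant does vanish (that is precisely what Golla's theorem provides), and the other branch of your dichotomy (Proposition~\ref{Prop:tb}) is unavailable there since the intermediate manifold need not be an L-space. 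Only in the remaining case, $L_2$ nontrivial with $tb(L_2)=2g(L_2)-1$, is your argument correct, and there it coincides with the paper's.
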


Now we deal with the overtwistedness of contact $(+1)$-surgeries. Among other things,  Conway \cite{c} and Onaran \cite{o} obtained sufficient conditions for the overtwistedness of contact $(+1)$-surgeries along Legendrian null-homologous knots. Here we generalize Conway's results to Legendrian knots in contact rational homology 3-spheres. Likewise, it is useful for determining the overtwistedness of contact $(\pm1)$-surgery along Legendrian links in $(S^3,\xi_{st})$.

\begin{theorem}\label{Theorem:Main3}
 Let $L$ be a Legendrian knot in a contact rational homology 3-sphere $(Y, \xi)$. Let $q$ be the order of $[L]$ in $H_{1}(Y;\mathbb{Z})$, and $\chi(F)$ be the Euler characteristic of a rational Seifert surface $F$ for $L$. \newline
 (1) If  $$tb_{\mathbb{Q}}(L)<-1~~ \text{and} ~~tb_{\mathbb{Q}}(L)-|rot_{\mathbb{Q}}(L)|<\frac{\chi(F)}{q},$$
 then $(Y_{+1}(L), \xi_{+1}(L))$ is overtwisted. \newline
 (2) If $$tb_{\mathbb{Q}}(L)+|rot_{\mathbb{Q}}(L)|<\frac{\chi(F)}{q}-2,$$
 then the result of any positive contact surgery along $L$ is overtwisted.
\end{theorem}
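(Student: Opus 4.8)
The plan is to generalize Conway's convex-surface-theoretic argument from the null-homologous case to the rationally null-homologous setting, detecting an overtwisted disk either through Giroux's criterion or, equivalently, through a violation of the rational Bennequin inequality. I would work throughout with a standard contact neighborhood $\nu(L)$ of $L$ and the rational Seifert surface $F$. First I would realize $F$ as a convex surface whose boundary consists of Legendrian ruling curves on the convex torus $\partial\nu(L)$, and compute its dividing set $\Gamma_F$. The number of intersection points of $\Gamma_F$ with $\partial F$ and the relative half-Euler class $\langle e(\xi|_F),[F,\partial F]\rangle$ are governed by $tb_{\mathbb{Q}}(L)$, $rot_{\mathbb{Q}}(L)$ and the order $q$, since $\partial F$ wraps $q$ times around the rational longitude; this is exactly what produces the quantities $tb_{\mathbb{Q}}(L)\pm rot_{\mathbb{Q}}(L)$ and the normalized Euler characteristic $\chi(F)/q$ that appear in the statement.

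Next I would perform the contact $(+1)$-surgery: removing $\nu(L)$ and regluing a solid torus along the slope given by the new meridian $\mu'=\mu+\lambda_c$, where $\lambda_c$ is the contact framing, the surface $F$ extends across the surgery torus to a convex surface $\hat F$ in $(Y_{+1}(L),\xi_{+1}(L))$. I would then recompute the dividing set of $\hat F$, equivalently the rational self-linking number of the surgery-dual transverse knot bounding $\hat F$ together with $\chi(\hat F)$. The hypothesis $tb_{\mathbb{Q}}(L)<-1$ rules out the degenerate case (for the unknot it is precisely what separates the tight contact structure on $S^1\times S^2$ from the overtwisted surgeries), and under the remaining inequality $tb_{\mathbb{Q}}(L)-|rot_{\mathbb{Q}}(L)|<\chi(F)/q$ the recomputation shows that $\hat F$ acquires a homotopically trivial dividing curve. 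By Giroux's criterion — equivalently, by the failure of the rational Bennequin inequality, which holds in every tight contact $3$-manifold — the surgered manifold contains an overtwisted disk, proving (1).

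For (2), I would invoke the Ding--Geiges algorithm to write an arbitrary positive contact $r$-surgery along $L$ as the contact $(+1)$-surgery on $L$ together with a collection of contact $(-1)$-surgeries on stabilized Legendrian push-offs of $L$. The stronger inequality $tb_{\mathbb{Q}}(L)+|rot_{\mathbb{Q}}(L)|<\chi(F)/q-2$ uses the larger of the two rational self-linking numbers and carries two extra units of slack; this is exactly what is needed so that the dividing-curve (Bennequin) estimate from (1) survives no matter how the auxiliary push-offs are stabilized and oriented. I would check that the overtwisted disk produced above can be isotoped off the auxiliary surgery tori, so that it persists in the fully surgered manifold, yielding overtwistedness of every positive contact surgery.

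I expect the main obstacle to be the framing and homology bookkeeping in the rationally null-homologous setting: tracking the dividing set of $F$ and the rational framings correctly under the $q$-fold wrapping of $\partial F$ and under the $(+1)$-surgery, so that the comparison with $\chi(F)/q$ emerges with the right normalization. In particular, pinning down the precise homology class and the inherited rational Seifert surface of the surgery-dual knot — which is what licenses the application of the rational Bennequin inequality — is the delicate step; the remaining ingredients are standard convex-surface and Bennequin-type estimates assembled in the right order.
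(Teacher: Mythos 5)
Your overall strategy --- pass to the surgery-dual knot, compute its rational classical invariants and its inherited rational Seifert surface, and detect overtwistedness through a rational Bennequin-type inequality --- is the same as the paper's, and for part (1) your route can be made to work. With $tb_{\mathbb{Q}}(L)<-1$, the image $L_0^*$ of a Legendrian push-off of $L$ (Legendrian isotopic to the surgery dual) has $tb_{\mathbb{Q}}(L_0^*)=tb_{\mathbb{Q}}(L)/(tb_{\mathbb{Q}}(L)+1)$, $rot_{\mathbb{Q}}(L_0^*)=rot_{\mathbb{Q}}(L)/(tb_{\mathbb{Q}}(L)+1)$ (the paper's Lemma~\ref{tb and rot}), order $q\,|tb_{\mathbb{Q}}(L)+1|$ in homology (Lemma~\ref{lemma:order3}), and a coned-off rational Seifert surface with Euler characteristic $\chi(F)$; hypothesis (1) then makes $L_0^*$ violate the inequality $tb_{\mathbb{Q}}+|rot_{\mathbb{Q}}|\le -\chi/q$ valid in tight contact manifolds (Baker--Etnyre), so $(Y_{+1}(L),\xi_{+1}(L))$ is overtwisted. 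One caveat on your constructive convex-surface version: since $tb_{\mathbb{Q}}(L)<-1$ forces $tb_{\mathbb{Q}}(L_0^*)>1>0$, the contact framing twists \emph{positively} relative to the surface framing along $\partial\hat F$, so $\hat F$ cannot be perturbed rel boundary to a convex surface; this is precisely why the paper first stabilizes the dual ($k$ times, $k$ large) and then applies the nonloose Bennequin inequality of Theorem~\ref{loose} (for knots with \emph{tight complement}) to the stabilized knot. Either fix --- stabilize, or argue by contradiction from the tight-manifold inequality --- completes (1).

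The genuine gap is in part (2). The overtwisted disk you produce in (1) comes either from a contradiction argument or from Giroux's criterion applied to a surface whose boundary wraps around the dual knot; in both cases you have no control over where that disk sits. In the Ding--Geiges/Conway decomposition, a positive contact $r$-surgery on $L$ is contact $(+1)$-surgery on $L$ followed by contact $(-1)$-surgeries along \emph{stabilized push-offs of the dual} $L^*$, and these auxiliary curves live exactly in a standard neighborhood of $L^*$ --- the region your surface $\hat F$, and possibly your disk, necessarily approaches. So ``isotoping the overtwisted disk off the auxiliary surgery tori'' is not a step you can verify; it may simply be false for the disk you have. What the argument needs, and what the paper proves, is a stronger, localized statement: \emph{both} single stabilizations $L_+^*$ and $L_-^*$ of the dual are loose, i.e.\ the complements $X_\pm^*(L)$ of their standard neighborhoods are overtwisted. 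This is where the two extra units of slack in hypothesis (2) are actually spent --- one stabilization must already produce a violation of the nonloose inequality of Theorem~\ref{loose}, giving $-|tb_{\mathbb{Q}}(L_\pm^*)|+|rot_{\mathbb{Q}}(L_\pm^*)| = \bigl(|rot_{\mathbb{Q}}(L)\pm(tb_{\mathbb{Q}}(L)+1)|-1\bigr)/|tb_{\mathbb{Q}}(L)+1| > -\chi(F)/(q\,|tb_{\mathbb{Q}}(L)+1|)$. Then one invokes Conway's observation that the result of any positive contact surgery along $L$ contains $(X_+^*(L),\xi_{+1}(L))$ or $(X_-^*(L),\xi_{+1}(L))$ as a contact submanifold, because all remaining surgery curves lie inside the standard neighborhood of one stabilized dual; overtwistedness then persists automatically, with no isotopy claim needed.
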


In \cite[Theorem 1.6 and Corollary 6.4]{dlw}, the authors obtained sufficient conditions for the result of contact $(+1)$-surgery along a Legendrian two-component link in $(S^3,\xi_{st})$ to be overtwisted via some specific configurations in the front projection. The following theorem is an improvement of \cite[Corollary 6.4]{dlw}.

\begin{theorem}\label{Theorem:Main2}
Suppose the front projection of a Legendrian two-component link
$\mathbb{L}=L_{1}\cup L_{2}$ in the standard contact 3-sphere $(S^3, \xi_{st})$ contains a configuration
exhibited in Figure~\ref{figure:ot}, then contact $(+1)$-surgery on $(S^3, \xi_{st})$ along $\mathbb{L}$ yields an
overtwisted contact 3-manifold.
\end{theorem}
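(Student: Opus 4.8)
The plan is to reduce this two-component link statement to the single-knot criterion of Theorem~\ref{Theorem:Main3} by exploiting the commutativity of contact surgeries. Writing $\mathbb{L}=L_1\cup L_2$, I would first perform contact $(+1)$-surgery on the component $L_2$ singled out by the configuration in Figure~\ref{figure:ot}, producing a contact manifold $(Y',\xi')$. Since contact $(+1)$-surgery on a Legendrian knot in $(S^3,\xi_{st})$ is topologically $(tb(L_2)+1)$-surgery, $Y'$ is a rational homology $3$-sphere precisely when $tb(L_2)+1\neq 0$, a condition the configuration is designed to enforce; in that case $H_1(Y';\mathbb{Z})\cong \mathbb{Z}/|tb(L_2)+1|$. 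The remaining component $L_1$ descends to a Legendrian knot $L_1'$ in $(Y',\xi')$, and by commutativity of contact surgery one has $(S^3(\mathbb{L}^{+}),\xi_{st}(\mathbb{L}^{+}))=(Y'_{+1}(L_1'),\xi'_{+1}(L_1'))$, so it suffices to prove the latter is overtwisted.

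Next I would compute, directly from the front projection, the rational invariants needed to invoke Theorem~\ref{Theorem:Main3}(1) for $L_1'$ in $(Y',\xi')$. With $\ell=\mathrm{lk}(L_1,L_2)$ the linking number, the class $[L_1']$ is $\ell$ times the generator of $H_1(Y';\mathbb{Z})$, so its order is $q=|tb(L_2)+1|/\gcd(|tb(L_2)+1|,\ell)$. The rational Thurston--Bennequin invariant $tb_{\mathbb{Q}}(L_1')$ and the rational rotation number $rot_{\mathbb{Q}}(L_1')$ can be expressed through the classical data $tb(L_1)$, $rot(L_1)$, $rot(L_2)$, the linking number $\ell$, and the surgery coefficient $tb(L_2)+1$, using the transformation formulas for these invariants under surgery. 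I would then read $\chi(F)$ off from a rational Seifert surface assembled from a Seifert surface for the link together with the surgery solid torus, and verify that the local picture of Figure~\ref{figure:ot} forces both $tb_{\mathbb{Q}}(L_1')<-1$ and $tb_{\mathbb{Q}}(L_1')-|rot_{\mathbb{Q}}(L_1')|<\chi(F)/q$. Applying Theorem~\ref{Theorem:Main3}(1) then yields overtwistedness.

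The main obstacle is the rational bookkeeping in the second step: constructing an explicit rational Seifert surface for $L_1'$ in $Y'$ and computing its Euler characteristic $\chi(F)$, while correctly tracking how the framing and the rotation number rescale once $L_1'$ is no longer null-homologous. In particular, the inequality $tb_{\mathbb{Q}}(L_1')-|rot_{\mathbb{Q}}(L_1')|<\chi(F)/q$ mixes the $1/q$-scaling of the self-linking data with the combinatorics of the configuration, and establishing it cleanly from the front projection --- rather than from an abstract surgery-theoretic formula --- is where the improvement over \cite[Corollary 6.4]{dlw} should be achieved. A secondary point to handle carefully is the case analysis forced by the sign of $tb(L_2)+1$ and by the possible orientations appearing in the configuration, since these affect both the sign of $\ell$ and the identification of the generator of $H_1(Y';\mathbb{Z})$, and hence the precise value of $q$.
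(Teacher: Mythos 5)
Your reduction has a fatal flaw: the hypotheses of Theorem~\ref{Theorem:Main3}(1) are inequalities among \emph{global} classical invariants, and no local configuration in a front projection can force them. Theorem~\ref{Theorem:Main2} assumes only that the front projection \emph{contains} the configuration of Figure~\ref{figure:ot}; away from that configuration the link is completely arbitrary. Concretely, after contact $(+1)$-surgery on $L_2$, Lemma~\ref{tb and rot} gives $tb_{\mathbb{Q}}(L_1')=tb(L_1)-\frac{l^2}{tb(L_2)+1}$, and $tb(L_1)$, $tb(L_2)$, $l$, $rot(L_1)$, $rot(L_2)$ and the genus entering $\chi(F)$ are all unconstrained by the local picture. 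For instance, modify $L_1$ outside the configuration so that $tb(L_1)$ is very large (e.g.\ take a Legendrian connected sum with a maximal-$tb$ positive torus knot); then $tb_{\mathbb{Q}}(L_1')$ becomes arbitrarily large positive, the condition $tb_{\mathbb{Q}}(L_1')<-1$ fails, and your argument yields nothing, while the theorem still asserts overtwistedness. There is also a degenerate case you wave away: nothing in the configuration prevents $tb(L_2)=-1$, in which case $Y'$ has $b_1(Y')=1$, is not a rational homology sphere, and Theorem~\ref{Theorem:Main3} does not apply at all; since the same can simultaneously happen for $L_1$, swapping the roles of the two components does not rescue the argument.

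The paper's proof is of a completely different, purely local and constructive nature, which is exactly what the statement requires: it produces a Legendrian knot $L'$ disjoint from $\mathbb{L}$ and a thrice-punctured sphere $S$ with $\partial S=L_1\cup L_2\cup L'$ such that the framing induced by $S$ on each $L_i$ is $tb(L_i)+1$, i.e.\ the contact $(+1)$-surgery framing, while the framing induced by $S$ on $L'$ equals the contact framing of $L'$ (Lemmata~\ref{lemma:tb(L')} and \ref{lemma:SurfaceFraming}). Hence after the surgery the two surgered boundary components cap off and $S$ becomes a disk bounded by $L'$ whose surface framing agrees with its contact framing --- an explicit overtwisted disk. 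This argument sees only the configuration itself and is insensitive to all the global invariants on which your inequalities depend; that insensitivity is precisely why the theorem can be stated with no numerical hypotheses, and why no route through the invariant-based criterion of Theorem~\ref{Theorem:Main3} can recover it.
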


\begin{figure}[htb]
\begin{overpic}
{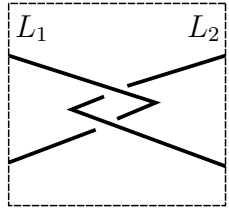}
\put(3, 65){$L_1$}
\put(68, 65){$L_{2}$}
\end{overpic}
\caption{A configuration in the front projection of a Legendrian two-component
link $\mathbb{L}$.}
\label{figure:ot}
\end{figure}

\medskip
\begin{acknowledgements}
The authors would like to thank Roger Casals for sharing us his alternative proof of Theorem~\ref{Theorem:Main2}. The first author was partially supported by Grant No. 11371033 of the National Natural Science Foundation of China. The second author was partially supported by Grant No. 11871332 of the National Natural Science Foundation of China. The third author was partially supported by grants from the Research Grants Council of the Hong Kong Special Administrative Region, China (Project No. 14300018).
\end{acknowledgements}

\section{knots in rational homology 3-spheres} \label{sec: invariant}

Suppose $Y$ is an oriented rational homology 3-sphere and $K$ is an oriented knot in $Y$. Suppose the order of $[K]$ in $H_{1}(Y;\mathbb{Z})$ is $q$. According to \cite{be}, we define a \emph{rational Seifert surface}  for $K$ to be a smooth map $j:F\rightarrow Y$ from a connected compact oriented surface $F$ to $Y$ that is an embedding from the interior of $F$ into $Y\setminus K$, and a $q$-fold cover from its boundary $\partial F$ to $K$.  Denote by $N(K)$ a closed tubular neighborhood of $K$ in $Y$, and by $\mu\subset\partial N(K)$ a meridian of $K$. We can assume that $j(F)
\cap\partial N(K)$ is composed of $c$ parallel oriented simple closed curves, each of which has homology $\nu\in H_{1}(\partial N(K); \mathbb{Z})$. Then we can choose a canonical longitude $\lambda_{can}$ such that $\nu=t\lambda_{can}+r\mu$, where homology classes of $\lambda_{can}$ and $\mu$ are also denoted by $\lambda_{can}$ and $\mu$, respectively, $t$ and $r$ are coprime integers, and $0\leq r<t$ (cf. \cite[Section 2.6]{r}). Certainly we have $ct=q$.

\subsection{Filtrations.}

Let $(\Sigma, \boldsymbol{\alpha}, \boldsymbol{\beta}, w, z)$ be a doubly pointed Heegaard diagram of $K$ in $Y$. Then the set of relative Spin$^{c}$-structures determines a filtration of the chain complex $\widehat{CF}(Y)$ via a map
$$\underline{\mathfrak{s}}_{w,z}:\mathbb{T}_{\alpha}\cap\mathbb{T}_{\beta}\rightarrow \underline{\text{Spin}^{c}}(Y,K).$$
Each relative Spin$^{c}$ structure $\underline{\mathfrak{s}}$ for $(Y,K)$ corresponds to a Spin$^{c}$ structure $\mathfrak{s}$ on $Y$ via a natural map $G_{Y,K}:\underline{\text{Spin}^{c}(}Y,K)\rightarrow \text{Spin}^{c}(Y)$ (see \cite[Section 2.2]{Rsurgery}).

Fix a rational Seifert surface $F$ for $K$.   As in \cite{hl}, the Alexander grading of a relative Spin$^{c}$-structure $\underline{\mathfrak{s}} \in \underline{\text{Spin}^{c}(}Y,K)$ is defined by

$$A(\underline{\mathfrak{s}})=\frac{1}{2q}(\langle  c_{1}(\underline{\mathfrak{s}}), [F] \rangle+q).$$

Moreover, the Alexander grading of an intersection point $x\in \mathbb{T}_{\alpha}\cap\mathbb{T}_{\beta}$  is defined by
$$A(x)=\frac{1}{2q}(\langle c_{1}(\underline{\mathfrak{s}}_{w,z}(x)), [F]\rangle+q).$$

In general, the Alexander grading $A(x)$ is a rational number.  Nonetheless, observe that for any two relative Spin$^c$ structures $\underline{\mathfrak{s}}_1, \underline{\mathfrak{s}}_2 \in G_{Y,K}^{-1} (\mathfrak{s})$ of a fixed $\mathfrak{s}$, we have $\underline{\mathfrak{s}}_2-\underline{\mathfrak{s}}_1=l \, \text{PD}[\mu]$ for some integer $l$, and $A(\underline{\mathfrak{s}}_2)-A(\underline{\mathfrak{s}}_1)=l$.

\subsection{Rational $\tau$-invariants and rational $\nu$-invariants}

For a fixed $\underline{\mathfrak{s}}\in\underline{\text{Spin}^{c}}(Y,K)$, let $C_{\underline{\mathfrak{s}}}$ be the $\mathbb{Z}\oplus\mathbb{Z}$-filtered chain complex $CFK^{\infty}(Y,K,\underline{\mathfrak{s}})$ (see \cite[Section 3]{Rsurgery}).

Let $$\iota_{k}:C_{\underline{\mathfrak{s}}}\{i=0, j\leq k\}\rightarrow C_{\underline{\mathfrak{s}}}\{i=0\}$$  be the inclusion map, where $k\in \mathbb{Z}$. It induces a homomorphism between the homologies $$\iota_{k\ast}:H_{\ast}(C_{\underline{\mathfrak{s}}}\{i=0, j\leq k\})\rightarrow \widehat{HF}(Y, \mathfrak{s}),$$
where $\mathfrak{s}=G_{Y,K}(\underline{\mathfrak{s}})$. Let  $$v_{k}:C_{\underline{\mathfrak{s}}}\{\max(i,j-k)=0\}\rightarrow C_{\underline{\mathfrak{s}}}\{i=0\}$$  be the composition of $\iota_{k}$ and the quotient map from $C_{\underline{\mathfrak{s}}}\{\max(i,j-k)=0\}$ to $C_{\underline{\mathfrak{s}}}\{i=0, j\leq k\}$. It induces a homomorphism between the homologies $$v_{k\ast}:H_{\ast}(C_{\underline{\mathfrak{s}}}\{\max(i,j-k)=0\})\rightarrow \widehat{HF}(Y, \mathfrak{s}).$$

Next we recall the definition of  rational $\tau$ invariant \cite{h}.

\begin{definition}\label{def:tau}
For any $[x]\neq0\in \widehat{HF}(Y, \mathfrak{s})$, define
$$\tau_{[x]}(Y,K)=\text{min}\{A(\underline{\mathfrak{s}})+k|[x]\in \text{Im}(\iota_{k\ast})\}.$$
\end{definition}

Then we introduce the definition of rational $\nu$ invariant in the same manner as Hom-Levine-Lidman did in the integral homology sphere case (\cite{hll}).

\begin{definition}\label{def:nu}
For any $[x]\neq0\in \widehat{HF}(Y, \mathfrak{s})$, define
$$\nu_{[x]}(Y,K)=\text{min}\{A(\underline{\mathfrak{s}})+k|[x]\in \text{Im}(v_{k\ast})\}.$$
\end{definition}

\begin{lemma}\label{lemma:nutau}
$\nu_{[x]}(Y,K)=\tau_{[x]}(Y,K)$ or $\tau_{[x]}(Y,K)+1$.

\end{lemma}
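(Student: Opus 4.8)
The plan is to prove the two inequalities $\tau_{[x]}(Y,K)\le \nu_{[x]}(Y,K)\le \tau_{[x]}(Y,K)+1$ separately, working entirely at the level of the subquotient complexes of $C_{\underline{\mathfrak{s}}}=CFK^{\infty}(Y,K,\underline{\mathfrak{s}})$.

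For the lower bound $\nu_{[x]}\ge\tau_{[x]}$, I would use the very definition of $v_k$: by construction $v_k=\iota_k\circ\pi_k$, where $\pi_k\colon C_{\underline{\mathfrak{s}}}\{\max(i,j-k)=0\}\to C_{\underline{\mathfrak{s}}}\{i=0,\,j\le k\}$ is the quotient map. (This quotient map is a chain map because the horizontal tail $\{i<0,\,j=k\}$ is a subcomplex of $C_{\underline{\mathfrak{s}}}\{\max(i,j-k)=0\}$, the differential there strictly decreasing $i$ while preserving $j=k$.) Passing to homology gives $v_{k\ast}=\iota_{k\ast}\circ\pi_{k\ast}$, so $\operatorname{Im}(v_{k\ast})\subseteq\operatorname{Im}(\iota_{k\ast})$. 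Hence every pair $(\underline{\mathfrak{s}},k)$ admissible in the definition of $\nu_{[x]}$ is admissible in that of $\tau_{[x]}$, and taking the minimum of $A(\underline{\mathfrak{s}})+k$ over the smaller set yields $\nu_{[x]}\ge\tau_{[x]}$.

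For the upper bound the key claim is the reverse inclusion $\operatorname{Im}(\iota_{k\ast})\subseteq\operatorname{Im}(v_{(k+1)\ast})$. To establish it, suppose $[x]=\iota_{k\ast}[c]$ where $c$ is a cycle supported in $C_{\underline{\mathfrak{s}}}\{i=0,\,j\le k\}$. I would regard $c$ as a chain in $\widehat{A}_{k+1}:=C_{\underline{\mathfrak{s}}}\{\max(i,j-k-1)=0\}$. Because $c$ is supported in $\{i=0,\,j\le k\}$, strictly below the corner $j=k+1$ of the hook, the $\widehat{A}_{k+1}$-differential of each generator of $c$ only produces terms with $i=0$ and smaller $j$; it therefore agrees with the differential of $C_{\underline{\mathfrak{s}}}\{i=0,\,j\le k\}$, so $c$ remains a cycle in $\widehat{A}_{k+1}$. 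Moreover $\pi_{k+1}(c)=c$ and $\iota_{k+1}$ merely re-includes it, whence $v_{(k+1)\ast}[c]=\iota_{k\ast}[c]=[x]$, proving the claim. Consequently, if $A(\underline{\mathfrak{s}})+k$ realizes $\tau_{[x]}$, then $[x]\in\operatorname{Im}(v_{(k+1)\ast})$ and so $\nu_{[x]}\le A(\underline{\mathfrak{s}})+(k+1)=\tau_{[x]}+1$.

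Finally, since for a fixed $\mathfrak{s}$ the quantities $A(\underline{\mathfrak{s}})+k$ all lie in a single coset of $\mathbb{Z}$ (changing $\underline{\mathfrak{s}}$ by $\mathrm{PD}[\mu]$ shifts $A$ by $1$ and the filtration index compensatingly, as recorded after the definition of the Alexander grading), both $\tau_{[x]}$ and $\nu_{[x]}$ share the same fractional part; combined with $\tau_{[x]}\le\nu_{[x]}\le\tau_{[x]}+1$ this forces $\nu_{[x]}\in\{\tau_{[x]},\,\tau_{[x]}+1\}$. I expect the only delicate point to be the chain-level verification in the upper-bound step—that re-grading the cycle $c$ into the larger hook complex $\widehat{A}_{k+1}$ introduces no new differential components—which hinges precisely on $c$ lying strictly below the corner of the hook.
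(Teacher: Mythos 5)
Your proof is correct and amounts to the same argument the paper intends: the paper's proof of this lemma is just a citation asserting that the $S^3$-case argument of \cite[Equation 34]{Rsurgery} adapts straightforwardly, and your two inclusions $\text{Im}(v_{k\ast})\subseteq\text{Im}(\iota_{k\ast})$ and $\text{Im}(\iota_{k\ast})\subseteq\text{Im}(v_{(k+1)\ast})$, together with the single-$\mathbb{Z}$-coset observation (needed because the Alexander gradings are rational here), are precisely that adaptation written out. The chain-level point you flag --- that a cycle supported in $C_{\underline{\mathfrak{s}}}\{i=0,\,j\le k\}$ sits strictly below the corner of the hook $C_{\underline{\mathfrak{s}}}\{\max(i,j-k-1)=0\}$, hence remains a cycle there and maps back to itself under $v_{k+1}$ --- is exactly the ``straightforward'' content the paper leaves implicit, and you verify it correctly.
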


\begin{proof}
Due to \cite[Proposition 24]{h} and \cite[Proposition 2.5]{lw}, the proof is similar to the case where $Y=S^3$ \cite[Equation 34]{Rsurgery}, and is straightforward.
\end{proof}

Consider the orientation reversal $-Y$ of $Y$.  We have a pairing
$$\langle-,-\rangle:\widehat{CF}(-Y,\mathfrak{s})\otimes\widehat{CF}(Y,\mathfrak{s})\rightarrow \mathbb{Z}/2\mathbb{Z},$$
given by
$$\langle x,y\rangle=
\begin{cases}
1& \text{if}~~   x=y,\\
0& \text{otherwise}.
\end{cases}$$
It descends to a pairing
$$\langle-,-\rangle:\widehat{HF}(-Y,\mathfrak{s})\otimes\widehat{HF}(Y,\mathfrak{s})\rightarrow \mathbb{Z}/2\mathbb{Z}.$$

\begin{definition}\label{def:tau*}
For any $[y]\neq0\in \widehat{HF}(-Y, \mathfrak{s})$, define
$$\tau^{\ast}_{[y]}(Y, K)=\text{min}\{A(\underline{\mathfrak{s}})+k|\exists\alpha\in \text{Im}(\iota_{k\ast}), \text{such that} \langle[y], \alpha\rangle\neq 0\}.$$
\end{definition}

\begin{proposition} \cite[Proposition 2.3]{lw}\label{prop: dual}
Let $[y]\neq0\in\widehat{HF}(-Y,\mathfrak{s})$. Then
$$\tau_{[y]}(-Y, K)=-\tau^{\ast}_{[y]}(Y, K).$$
\end{proposition}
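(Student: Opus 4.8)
The plan is to deduce the identity from the orientation-reversal duality between the knot filtrations on $Y$ and $-Y$, reducing everything to a formal adjointness statement for filtered complexes over $\mathbb{Z}/2\mathbb{Z}$.

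First I would set up the duality. By the behaviour of Heegaard Floer homology under orientation reversal, the complex $\widehat{CF}(-Y,\mathfrak{s})$ is the dual $\operatorname{Hom}(\widehat{CF}(Y,\mathfrak{s}),\mathbb{Z}/2\mathbb{Z})$, with the pairing $\langle-,-\rangle$ being exactly the evaluation pairing, and the Alexander filtration reversed: a generator has Alexander grading on $-Y$ equal to the negative of its grading on $Y$, so that $A(\underline{\mathfrak{s}})$ changes sign. Tracking absolute Alexander gradings (which, for a fixed $\mathfrak{s}$, lie in a single coset $a_{0}+\mathbb{Z}$ because $\underline{\mathfrak{s}}_{2}-\underline{\mathfrak{s}}_{1}=l\,\mathrm{PD}[\mu]$ shifts $A$ by $l\in\mathbb{Z}$), this identifies the subcomplex $C_{\underline{\mathfrak{s}}}\{i=0,j\le k\}$ for $-Y$ with the annihilator of the subcomplex at the complementary level for $Y$; concretely the inclusion $\iota_{k}$ on the $-Y$ side is the transpose of the quotient projection $q\colon C\{i=0\}\to C\{i=0\}/C\{i=0,j\le -k-1\}$ on the $Y$ side.

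Next I would run the adjointness argument in homology. Over the field $\mathbb{Z}/2\mathbb{Z}$, universal coefficients identify the homology of a dual complex with the dual of the homology, and for a linear map its transpose has image equal to the annihilator of its kernel. Applying this to $q_{*}\colon \widehat{HF}(Y,\mathfrak{s})\to H_{*}(C\{i=0\}/C\{i=0,j\le -k-1\})$ gives
$$[y]\in\operatorname{Im}(\iota_{k\ast}^{-Y})\iff \langle[y],\alpha\rangle=0 \text{ for all } \alpha\in\ker(q_{*}).$$
The long exact sequence of the pair identifies $\ker(q_{*})=\operatorname{Im}(\iota_{(-k-1)\ast}^{Y})$, so $[y]\in\operatorname{Im}(\iota_{k\ast}^{-Y})$ holds precisely when $[y]$ pairs trivially with every class in $\operatorname{Im}(\iota_{(-k-1)\ast}^{Y})$. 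Finally I would compare thresholds. Writing $s=A(\underline{\mathfrak{s}})+m$ for the absolute grading on the $Y$ side, the predicate ``there exists $\alpha\in\operatorname{Im}(\iota_{m\ast}^{Y})$ with $\langle[y],\alpha\rangle\ne0$'' is monotone nondecreasing in $s$ because the subcomplexes are nested, so it holds exactly for $s\ge s_{0}$ with $s_{0}=\tau^{\ast}_{[y]}(Y,K)$. On the $-Y$ side, $[y]\in\operatorname{Im}(\iota_{k\ast}^{-Y})$ is the negation of this predicate at level $m=-k-1$; since $A_{-Y}=-A_{Y}$, that level corresponds to absolute $Y$-grading $s=-s'-1$ where $s'=A(\underline{\mathfrak{s}})+k$ is the absolute grading on $-Y$. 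Hence $[y]\in\operatorname{Im}(\iota_{k\ast}^{-Y})\iff s<s_{0}\iff s'\ge -s_{0}$, and taking the minimum yields $\tau_{[y]}(-Y,K)=-s_{0}=-\tau^{\ast}_{[y]}(Y,K)$; nonvanishing of $[y]$ guarantees that the relevant minimum exists.

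The main obstacle is the first step: establishing the orientation-reversal duality at the level of the rational (relative $\mathrm{Spin}^{c}$) knot filtration and pinning down the exact index shift ``$-k-1$'', i.e.\ verifying that $\widehat{CF}(-Y)$ with its Alexander filtration really is the filtered dual of $\widehat{CF}(Y)$ with gradings negated and the correct normalization $A_{-Y}(\underline{\mathfrak{s}})=-A_{Y}(\underline{\mathfrak{s}})$. Once the duality and this shift are in place, the remaining two steps are purely formal homological algebra.
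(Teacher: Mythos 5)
Your proposal is correct and follows essentially the same route as the proof this paper cites (\cite[Proposition 2.3]{lw}, which in turn is modeled on Ozsv\'ath--Szab\'o's argument that $\tau(m(K))=-\tau(K)$): identify $\widehat{CF}(-Y,\mathfrak{s})$ with the $\mathbb{Z}/2\mathbb{Z}$-dual of $\widehat{CF}(Y,\mathfrak{s})$ so that the level-$k$ filtration subcomplex on the $-Y$ side is the dual of the quotient by the level-$(-k-1)$ subcomplex on the $Y$ side, then run the transpose/annihilator argument in homology and compare thresholds using that gradings in a fixed $\mathrm{Spin}^c$ class differ by integers. The normalization $A_{-Y}(\underline{\mathfrak{s}})=-A_{Y}(\underline{\mathfrak{s}})$ that you flag as the main obstacle is precisely the point the cited proof verifies; the remaining steps coincide with yours.
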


\begin{proposition} \cite[Proposition 2.4]{lw}\label{prop: additivity0}
 For any  $[y_{i}]\neq0\in\widehat{HF}(-Y_{i}, \mathfrak{s}_{i})$, $i=1,2$, we have
$$\tau^{\ast}_{[y_1]\otimes[y_2]}(Y_{1}\sharp Y_{2}, K_{1}\sharp K_{2})=\tau^{\ast}_{[y_1]}(Y_{1}, K_{1})+\tau^{\ast}_{[y_2]}(Y_{2}, K_{2}).$$
\end{proposition}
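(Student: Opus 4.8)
The plan is to deduce the equality from two subadditivity inequalities of opposite flavour and then glue them together using the duality of Proposition~\ref{prop: dual}; this mirrors the way one proves additivity of the Ozsv\'ath--Szab\'o $\tau$-invariant of knots in $S^3$. The one structural input I would isolate is a filtered connected-sum (K\"unneth) formula: the $\mathbb{Z}\oplus\mathbb{Z}$-filtered chain homotopy type of $CFK^{\infty}(Y_1\sharp Y_2, K_1\sharp K_2,\underline{\mathfrak{s}}_1\sharp\underline{\mathfrak{s}}_2)$ agrees with that of the tensor product $C_{\underline{\mathfrak{s}}_1}\otimes C_{\underline{\mathfrak{s}}_2}$, in such a way that (i) the two filtration indices add, (ii) $A(\underline{\mathfrak{s}}_1\sharp\underline{\mathfrak{s}}_2)=A(\underline{\mathfrak{s}}_1)+A(\underline{\mathfrak{s}}_2)$, and (iii) under the induced identification $\widehat{HF}(-(Y_1\sharp Y_2),\mathfrak{s})\cong\widehat{HF}(-Y_1,\mathfrak{s}_1)\otimes\widehat{HF}(-Y_2,\mathfrak{s}_2)$ the pairing of Definition~\ref{def:tau*} is multiplicative, i.e. $\langle a_1\otimes a_2,\,b_1\otimes b_2\rangle=\langle a_1,b_1\rangle\,\langle a_2,b_2\rangle$.

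Granting this, the first step is the upper bound $\tau^{\ast}_{[y_1]\otimes[y_2]}(Y_1\sharp Y_2,K_1\sharp K_2)\le\tau^{\ast}_{[y_1]}(Y_1,K_1)+\tau^{\ast}_{[y_2]}(Y_2,K_2)$. I would pick $(\underline{\mathfrak{s}}_i,k_i)$ realizing $\tau^{\ast}_{[y_i]}(Y_i,K_i)$ together with classes $\alpha_i\in\mathrm{Im}(\iota_{k_i\ast})$ for which $\langle[y_i],\alpha_i\rangle\ne0$. Since the product of classes coming from $C_{\underline{\mathfrak{s}}_1}\{i=0,j\le k_1\}$ and $C_{\underline{\mathfrak{s}}_2}\{i=0,j\le k_2\}$ is represented in $(C_{\underline{\mathfrak{s}}_1}\otimes C_{\underline{\mathfrak{s}}_2})\{i=0,j\le k_1+k_2\}$, the class $\alpha_1\otimes\alpha_2$ lies in $\mathrm{Im}(\iota_{(k_1+k_2)\ast})$ for the connected sum, and multiplicativity gives $\langle[y_1]\otimes[y_2],\alpha_1\otimes\alpha_2\rangle=\langle[y_1],\alpha_1\rangle\langle[y_2],\alpha_2\rangle\ne0$. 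As $A(\underline{\mathfrak{s}}_1\sharp\underline{\mathfrak{s}}_2)+(k_1+k_2)=\tau^{\ast}_{[y_1]}+\tau^{\ast}_{[y_2]}$, Definition~\ref{def:tau*} yields the bound. The same argument run through Definition~\ref{def:tau}, where only the image condition (and not the pairing) is used, gives the companion upper bound $\tau_{[y_1]\otimes[y_2]}((-Y_1)\sharp(-Y_2),K_1\sharp K_2)\le\tau_{[y_1]}(-Y_1,K_1)+\tau_{[y_2]}(-Y_2,K_2)$ for the orientation-reversed manifolds.

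The second step turns this companion bound into the reverse inequality for $\tau^{\ast}$ by means of Proposition~\ref{prop: dual}. Using $-(Y_1\sharp Y_2)=(-Y_1)\sharp(-Y_2)$ and $\tau_{[y]}(-Y,K)=-\tau^{\ast}_{[y]}(Y,K)$, the companion bound rewrites as $-\tau^{\ast}_{[y_1]\otimes[y_2]}(Y_1\sharp Y_2,K_1\sharp K_2)\le-\tau^{\ast}_{[y_1]}(Y_1,K_1)-\tau^{\ast}_{[y_2]}(Y_2,K_2)$, that is $\tau^{\ast}_{[y_1]\otimes[y_2]}\ge\tau^{\ast}_{[y_1]}+\tau^{\ast}_{[y_2]}$. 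Combining the two inequalities gives the asserted equality.

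I expect the main obstacle to be the structural input, namely the filtered K\"unneth formula and, within it, the two bookkeeping points (ii) and (iii) in the rational setting. Because the order of $[K_1\sharp K_2]$ in $H_1$ is $\mathrm{lcm}(q_1,q_2)$ rather than $q_1$ or $q_2$, and because the Alexander grading carries the normalization $A(\underline{\mathfrak{s}})=\frac{1}{2q}(\langle c_1(\underline{\mathfrak{s}}),[F]\rangle+q)$, verifying additivity of $A$ requires controlling the connected-sum behaviour of the rational Seifert surface $F$ and of $c_1$; this is the one place where the rational case genuinely differs from the integral one and where care is needed. The multiplicativity of the pairing, by contrast, should follow directly from the fact that it is defined on generators and that the generators of the connected sum are products of generators of the factors.
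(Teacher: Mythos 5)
There is a preliminary point that affects the whole comparison: the paper does not prove this proposition at all. It is imported, statement and all, from \cite[Proposition 2.4]{lw}, so there is no in-paper argument to measure your proposal against; the proof lives in the cited reference. Judged on its own terms, your logical skeleton is sound and is the natural (indeed the standard) route: subadditivity of $\tau^{\ast}$ for the connected sum, obtained by exhibiting $\alpha_1\otimes\alpha_2\in\mathrm{Im}(\iota_{(k_1+k_2)\ast})$ and using multiplicativity of the pairing; the companion subadditivity of $\tau$ for $(-Y_1)\sharp(-Y_2)$, which uses only the image condition; and Proposition~\ref{prop: dual} to convert the latter into superadditivity of $\tau^{\ast}$. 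Both inequalities are correctly executed \emph{granted} your ``structural input,'' and your remark that the pairing is multiplicative is correct, since for a connected-sum Heegaard diagram with basepoints in the connecting region the generators of the sum are pairs of generators and the differential is the tensor-product differential.

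The genuine gap is the structural input itself, which you posit rather than establish, and it is not a formality: essentially all of the content of the proposition sits there. Three things need proof or precise citation. (a) A K\"unneth theorem for $CFK^{\infty}$ of connected sums of \emph{rationally null-homologous} knots respecting relative Spin$^c$ structures; here one must note that the natural map $\underline{\text{Spin}^c}(Y_1,K_1)\times\underline{\text{Spin}^c}(Y_2,K_2)\to\underline{\text{Spin}^c}(Y_1\sharp Y_2,K_1\sharp K_2)$ is surjective but not injective (its fibers are orbits of the simultaneous shift by $(\mathrm{PD}[\mu_1],-\mathrm{PD}[\mu_2])$), and check that the total $j$-filtration on $C_{\underline{\mathfrak{s}}_1}\otimes C_{\underline{\mathfrak{s}}_2}$, taken over the Laurent ring $\mathbb{F}[U,U^{-1}]$, depends only on $\underline{\mathfrak{s}}_1\sharp\underline{\mathfrak{s}}_2$. (b) Additivity of the Alexander grading, which you correctly single out as the crux but never verify. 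With the paper's normalization $A(\underline{\mathfrak{s}})=\frac{1}{2q}\left(\langle c_1(\underline{\mathfrak{s}}),[F]\rangle+q\right)$ and $q=\mathrm{lcm}(q_1,q_2)$, additivity is \emph{equivalent} to the identity $\frac{1}{q}\langle c_1,[F]\rangle=\frac{1}{q_1}\langle c_1,[F_1]\rangle+\frac{1}{q_2}\langle c_1,[F_2]\rangle+1$, where $F$ is a rational Seifert surface for $K_1\sharp K_2$ assembled from $q/q_1$ copies of $F_1$ and $q/q_2$ copies of $F_2$ joined by bands; the ``$+1$'' has to come out of an honest evaluation of the relative Chern class on the banding region, and nothing in your write-up produces it. (c) Compatibility of the two chain-level K\"unneth equivalences (for $Y_1\sharp Y_2$ and for $-(Y_1\sharp Y_2)$) with the duality pairing, which is easy with the right diagram but must be said. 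Identifying where the difficulty lies is not the same as resolving it: as written, your proposal is a correct \emph{reduction} of the proposition to an unproved filtered K\"unneth lemma, and that lemma is precisely what a proof of this proposition consists of.
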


Suppose $(Y_1, \xi_1)$ and $(Y_2, \xi_2)$ are two contact rational homology 3-spheres with nonvanishing contact invariants $c(\xi_1)$ and $c(\xi_2)$, then the contact invariant of $(Y_1\sharp Y_2, \xi_1\sharp\xi_2)$ is $c(\xi_1)\otimes c(\xi_2)\in \widehat{HF}(-Y_1)\otimes \widehat{HF}(-Y_2)$. See for example \cite[Page 105]{h}. As a corollary of Proposition~\ref{prop: additivity0}, we have the following proposition.

\begin{proposition}\label{prop: additivity}
Let $K_1\subset (Y_1, \xi_1)$ and $K_2\subset (Y_2, \xi_2)$ be two smooth knots. Then
$$\tau^{\ast}_{c(\xi_1)\otimes c(\xi_2)}(Y_{1}\sharp Y_{2}, K_{1}\sharp K_{2})=\tau^{\ast}_{c(\xi_1)}(Y_{1}, K_{1})+\tau^{\ast}_{c(\xi_2)}(Y_{2}, K_{2}).$$
\end{proposition}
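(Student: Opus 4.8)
The plan is to deduce Proposition~\ref{prop: additivity} directly from Proposition~\ref{prop: additivity0} by specializing the classes $[y_i]$ to the contact invariants. First I would recall the two ingredients already assembled in the excerpt. The additivity statement of Proposition~\ref{prop: additivity0} holds for \emph{arbitrary} nonzero classes $[y_i]\in \widehat{HF}(-Y_i,\mathfrak{s}_i)$, so to obtain the contact version it suffices to know that the contact invariant $c(\xi_1\sharp\xi_2)$ of the connected sum is precisely the tensor product $c(\xi_1)\otimes c(\xi_2)$ under the K\"unneth-type identification $\widehat{HF}(-(Y_1\sharp Y_2))\cong \widehat{HF}(-Y_1)\otimes \widehat{HF}(-Y_2)$. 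The latter fact is exactly the statement quoted immediately before the proposition (the multiplicativity of the contact invariant under connected sum, cf. \cite[Page 105]{h}), and since $c(\xi_1),c(\xi_2)$ are assumed nonvanishing, the class $c(\xi_1)\otimes c(\xi_2)$ is a legitimately nonzero input to Proposition~\ref{prop: additivity0}.

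Next I would set $[y_1]=c(\xi_1)$, $[y_2]=c(\xi_2)$, and $[y_1]\otimes[y_2]=c(\xi_1)\otimes c(\xi_2)$, noting that each lives in the appropriate Floer group for the Spin$^c$-structure $\mathfrak{s}_i$ carrying the contact invariant. Substituting into Proposition~\ref{prop: additivity0} gives
$$\tau^{\ast}_{c(\xi_1)\otimes c(\xi_2)}(Y_{1}\sharp Y_{2}, K_{1}\sharp K_{2})=\tau^{\ast}_{c(\xi_1)}(Y_{1}, K_{1})+\tau^{\ast}_{c(\xi_2)}(Y_{2}, K_{2}),$$
which is the desired identity once we recognize $\tau^{\ast}_{c(\xi_1)\otimes c(\xi_2)}$ on the left as $\tau^{\ast}_{c(\xi_1\sharp\xi_2)}$ via the connected-sum formula for the contact class. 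In other words, the whole proof is a one-line substitution followed by an invocation of the multiplicativity of $c(\xi)$.

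The only point that requires genuine care—and what I expect to be the main (if modest) obstacle—is checking that the identification of $c(\xi_1\sharp\xi_2)$ with $c(\xi_1)\otimes c(\xi_2)$ is compatible with the particular K\"unneth splitting of $\widehat{HF}(-(Y_1\sharp Y_2))$ used implicitly in Proposition~\ref{prop: additivity0}, and that the connected sum $K_1\sharp K_2$ of knots sits inside $Y_1\sharp Y_2$ in a way that matches the hypotheses under which that proposition was proved in \cite{lw}. Both compatibilities are standard consequences of the naturality of the connected-sum maps in Heegaard Floer theory, so I would simply cite them and keep the argument short; no new computation is needed beyond assembling the quoted facts.
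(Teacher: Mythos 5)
Your proof is correct and is essentially identical to the paper's: the paper also treats this proposition as an immediate corollary of Proposition~\ref{prop: additivity0}, obtained by substituting $[y_i]=c(\xi_i)$ and invoking the connected-sum formula $c(\xi_1\sharp\xi_2)=c(\xi_1)\otimes c(\xi_2)$ (cited to \cite[Page 105]{h}), with the nonvanishing of the $c(\xi_i)$ ensuring these are legitimate inputs.
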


\subsection{Mapping cone for Morse surgery along knots in rational homology 3-spheres} \label{sec: mappingcone}

For $p\in\mathbb{Z}$, let $Y_p(K)$ denote the 3-manifold obtained by performing Dehn surgery along $K$ in $Y$ with coefficient $p$ with respect to the canonical longitude $\lambda_{can}$. The meridian $\mu$ is isotopic to a core circle of the
glued-in solid torus. Let $K_p$ denote this core circle, with the orientation inherited from $-\mu$ (see \cite[Section 1.1]{hl}). The sets $\underline{\text{Spin}^c}(Y,K)$ and
$\underline{\text{Spin}^c}(Y_p(K),K_p)$ are naturally identified and the fibers of the map $$G_{Y_p(K),K_p}:\underline{\text{Spin}^c}(Y_p(K),K_p)\to \text{Spin}^c(Y_p(K))$$ are the orbits of
the action of $PD[\lambda]$, where $\lambda=\lambda_{can}+p\mu$. Let $X_p(K)$ be the 4-manifold obtained by attaching a 4-dimensional 2-handle $H$ to $Y\times I$ along $K\times\{ 1\}$
with coefficient $p$ with respect to $\lambda_{can}$. Then $\partial X_p(K)=(-Y)\sqcup Y_p(K)$. Let $C$ denote the core disk of the attached 2-handle in $X_p(K)$ with $\partial
C=K\times \{1\}$. For a rational Seifert surface $j:F\to Y$ for $K$, $F_{qC}=(j(F)\times\{ 1\})\cup (-qC)$ represents a homology class $[F_{qC}]$ in $H_2(X_p(K);\mathbb{Z})$. Given a Spin$^c$ structure $\mathfrak{s}$ on $Y$, we can extend $\mathfrak{s}$ to a Spin$^c$ structure $\mathfrak{t}$ on $X_p(K)$. All Spin$^c$ structures $\mathfrak{t}$ on $X_p(K)$ with $\mathfrak{t}|_Y=\mathfrak{s}$ are distinguished by $\left\langle c_1(\mathfrak{t}),[F_{qC}]\right\rangle$. For each $\underline{\mathfrak{s}}\in\underline{\text{Spin}^c}(Y,K)$
with $G_{Y,K}(\underline{\mathfrak{s}})=\mathfrak{s}$, there is a unique Spin$^c$ structure $\mathfrak{t}$ on $X_p(K)$ such that $\mathfrak{t}|_Y=\mathfrak{s}$ and
$$\left\langle c_{1}(\mathfrak{t}),[F_{qC}]\right\rangle +pq-cr=2qA(\underline{\mathfrak{s}})$$ (see \cite[Theorem 4.2]{r}).

For $\underline{\mathfrak{s}}\in\underline{\text{Spin}^c}(Y,K)$, let $A_{\underline{\mathfrak{s}}}=C_{\underline{\mathfrak{s}}}\{\max(i,j)=0\}$ and $B_{\underline{\mathfrak{s}}}=C_{\underline{\mathfrak{s}}}\{i=0\}$.
There are two natural projection maps
$$v_{\underline{\mathfrak{s}}}:A_{\underline{\mathfrak{s}}}\rightarrow B_{\underline{\mathfrak{s}}}, \;\; h_{\underline{\mathfrak{s}}}:A_{\underline{\mathfrak{s}}}\rightarrow B_{\underline{\mathfrak{s}}+PD[\lambda]}.$$
Define
$$\Phi : \bigoplus\limits_{\underline{\mathfrak{s}}} A_{\underline{\mathfrak{s}}} \rightarrow \bigoplus\limits_{\underline{\mathfrak{s}}} B_{\underline{\mathfrak{s}}}, \;\; (\underline{\mathfrak{s}}, a) \mapsto\left(\underline{\mathfrak{s}}, v_{\underline{\mathfrak{s}}}(a)\right)+\left(\underline{\mathfrak{s}}+PD[\lambda], h_{\underline{\mathfrak{s}}}(a)\right),$$
which is often written in the following form

$$\xymatrix{
\cdots\ar[drr] & & A_{\underline{\mathfrak{s}}-PD[\lambda]}\ar[d]^(0.6){v_{\underline{\mathfrak{s}}-PD[\lambda]}}\ar[drr]^(0.5){h_{\underline{\mathfrak{s}}-PD[\lambda]}} & & A_{\underline{\mathfrak{s}}}\ar[d]^(0.6){v_{\underline{\mathfrak{s}}}}\ar[drr]^(0.5){h_{\underline{\mathfrak{s}}}} & & A_{\underline{\mathfrak{s}}+PD[\lambda]}\ar[d]^(0.6){v_{\underline{\mathfrak{s}}+PD[\lambda]}}\ar[drr]^(0.5){h_{\underline{\mathfrak{s}}+PD[\lambda]}}& &\cdots\\
\cdots& &B_{\underline{\mathfrak{s}}-PD[\lambda]}& & B_{\underline{\mathfrak{s}}}& &B_{\underline{\mathfrak{s}}+PD[\lambda]}& &\cdots
}$$

Note that the mapping cone  of $\Phi$ splits over equivalence classes of relative Spin$^{c}$ structures, where $\underline{\mathfrak{s}}_{1}$ and $\underline{\mathfrak{s}}_{2}$ are equivalent if $\underline{\mathfrak{s}}_{2}=\underline{\mathfrak{s}}_{1}+mPD[\lambda]$ for some integer $m$.

\begin{theorem} \cite[Theorem 6.1]{Rsurgery}\label{iso}
Denote $\widehat{\mathbb{X}}_{[\underline{\mathfrak{s}}]}$ the summand of the cone of $\Phi$ corresponding to the equivalence class of $\underline{\mathfrak{s}}$. Then there exists a quasi-isomorphism of the complexes $$\Psi: \widehat{\mathbb{X}}_{[\underline{\mathfrak{s}}]}\cong \widehat{CF}(Y_p(K), G_{Y_{p}(K), K_{p}}(\underline{\mathfrak{s}})).$$
\end{theorem}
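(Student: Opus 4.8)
The plan is to follow the holomorphic-triangle strategy of Ozsv\'ath--Szab\'o's surgery formula, adapted to the rationally null-homologous setting, in three stages: a large-surgery identification, a triangle-counting identification of the maps $v_{\underline{\mathfrak{s}}}$ and $h_{\underline{\mathfrak{s}}}$, and an algebraic truncation argument.

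First, I would establish the \emph{large surgery} statement: for an integer $N$ much larger than the genus of $F$ and the framing data, the complex $A_{\underline{\mathfrak{s}}}=C_{\underline{\mathfrak{s}}}\{\max(i,j)=0\}$ computes $\widehat{CF}(Y_N(K), G_{Y_N(K),K_N}(\underline{\mathfrak{s}}))$. Concretely, starting from the doubly pointed diagram $(\Sigma, \boldsymbol{\alpha}, \boldsymbol{\beta}, w, z)$ for $K$ in $Y$, one winds a distinguished $\beta$-curve $N$ times through the region between $w$ and $z$ to obtain a Heegaard diagram for $Y_N(K)$; for $N$ large, the domains supporting low-energy Whitney disks force the differential to count exactly the generators recorded in $C_{\underline{\mathfrak{s}}}\{\max(i,j)=0\}$, and an area/energy filtration argument (as in the integral case) shows this count is exact. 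The rational features, namely $\nu=t\lambda_{can}+r\mu$ with $ct=q$, enter only through the bookkeeping of which relative $\mathrm{Spin}^c$ class each intersection point carries, via the Alexander-grading normalization from Section~\ref{sec: invariant}.

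Second, I would realize the two projections as cobordism maps. Form the Heegaard triple $(\Sigma, \boldsymbol{\alpha}, \boldsymbol{\beta}, \boldsymbol{\gamma})$ in which $\boldsymbol{\gamma}$ encodes the $p$-framed surgery with respect to $\lambda_{can}$, so that the associated cobordism is $X_p(K)$ and $[F_{qC}]$ is represented in the triangle picture. Counting holomorphic triangles in the two families of homotopy classes distinguished by $\langle c_1(\mathfrak{t}),[F_{qC}]\rangle$ then yields precisely $v_{\underline{\mathfrak{s}}}\colon A_{\underline{\mathfrak{s}}}\to B_{\underline{\mathfrak{s}}}$ and $h_{\underline{\mathfrak{s}}}\colon A_{\underline{\mathfrak{s}}}\to B_{\underline{\mathfrak{s}}+PD[\lambda]}$, with the $PD[\lambda]$-shift coming from the deck translation along the surgery curve, $\lambda=\lambda_{can}+p\mu$ as in Section~\ref{sec: mappingcone}. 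This is consistent with the unique $\mathfrak{t}$ satisfying $\langle c_1(\mathfrak{t}),[F_{qC}]\rangle + pq - cr = 2qA(\underline{\mathfrak{s}})$ recorded just before the statement, so the triangle maps respect the relative $\mathrm{Spin}^c$ grading.

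Third, I would run the truncation-and-mapping-cone argument. For $A(\underline{\mathfrak{s}})$ sufficiently positive the vertical map $v_{\underline{\mathfrak{s}}}$ is a quasi-isomorphism, and for $A(\underline{\mathfrak{s}})$ sufficiently negative the horizontal map $h_{\underline{\mathfrak{s}}}$ is a quasi-isomorphism; both follow from the large-surgery identification applied in the two directions. A standard homological-algebra lemma then replaces the infinite zig-zag complex $\widehat{\mathbb{X}}_{[\underline{\mathfrak{s}}]}$ by a finite sub-quotient cone with the same homology, and comparing this finite cone with the iterated cone produced by repeatedly applying the surgery exact triangle (relating $Y_p(K)$, $Y$, and the large surgery) yields the quasi-isomorphism $\Psi$. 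The main obstacle I anticipate is the $\mathrm{Spin}^c$ bookkeeping in the rational setting: one must verify that the cone splits correctly over the $PD[\lambda]$-equivalence classes, that the triangle maps land in the claimed summands $B_{\underline{\mathfrak{s}}}$ and $B_{\underline{\mathfrak{s}}+PD[\lambda]}$, and that the normalization $A(\underline{\mathfrak{s}})$ is compatible with the cobordism grading via $[F_{qC}]$ --- precisely the content of the identity $\langle c_1(\mathfrak{t}),[F_{qC}]\rangle+pq-cr=2qA(\underline{\mathfrak{s}})$. Establishing naturality of these identifications and uniform admissibility of the infinitely many diagrams, so that the truncation is legitimate, is the delicate part; the analytic triangle counts themselves are routine once the combinatorial $\mathrm{Spin}^c$ data has been pinned down.
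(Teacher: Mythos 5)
Theorem~\ref{iso} is not proved in this paper at all: it is imported verbatim from Ozsv\'ath--Szab\'o \cite[Theorem 6.1]{Rsurgery}, so there is no in-paper argument to compare yours against, and the relevant benchmark is the original proof. Your outline follows essentially that same route --- the large-surgery identification of the complexes $A_{\underline{\mathfrak{s}}}$ via winding in a doubly pointed diagram, the realization of $v_{\underline{\mathfrak{s}}}$ and $h_{\underline{\mathfrak{s}}}$ as Spin$^c$-decomposed holomorphic-triangle (cobordism) maps compatible with the normalization $\langle c_1(\mathfrak{t}),[F_{qC}]\rangle+pq-cr=2qA(\underline{\mathfrak{s}})$, and a truncation argument --- so it is correct in outline and is the standard approach. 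The one refinement worth noting is that in the original argument the surgery exact sequence is invoked a single time, for a composite framing $\lambda+N\mu$ with $N$ large (yielding an exact triangle relating $\widehat{HF}(Y_p(K))$, the large surgery, and $N$ copies of $\widehat{HF}(Y)$), rather than by iterating the triangle as your last step suggests; the truncation then matches the resulting finite cone with the infinite cone $\widehat{\mathbb{X}}_{[\underline{\mathfrak{s}}]}$.
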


\begin{theorem} \cite[Theorem 4.2]{r}\label{rao}
 Suppose $\mathfrak{s}\in\text{Spin}^c(Y)$ and $\mathfrak{t}\in \text{Spin}^c(X_p(K))$ extends $\mathfrak{s}$. Then the map
 $$\widehat{CF}(Y,\mathfrak{s})\to\widehat{CF}(Y_p(K),\mathfrak{t}|_{Y_p(K)})$$ induced by $\mathfrak{t}$ corresponds via $\Psi$ to the inclusion of $B_{\underline{\mathfrak{s}}}$ in $\widehat{\mathbb{X}}_{[\underline{\mathfrak{s}}]}$, where $\underline{\mathfrak{s}}\in \underline{\text{Spin}^c}(Y,K)$ is determined by  $G_{Y,K}(\underline{\mathfrak{s}})=\mathfrak{s}$ and $$\left\langle c_{1}\left(\mathfrak{t}\right),[F_{qC}]\right\rangle +pq-cr=2qA(\underline{\mathfrak{s}}).$$
\end{theorem}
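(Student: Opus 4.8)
The plan is to adapt the naturality statement for the integer surgery formula of Ozsv\'{a}th and Szab\'{o} to the present rational setting, taking the quasi-isomorphism $\Psi$ of Theorem~\ref{iso} as given and showing that the two-handle cobordism map is carried by $\Psi$ to the inclusion of the summand $B_{\underline{\mathfrak{s}}}$. The cobordism map $\widehat{CF}(Y,\mathfrak{s})\to\widehat{CF}(Y_p(K),\mathfrak{t}|_{Y_p(K)})$ is defined by counting holomorphic triangles in a Heegaard triple $(\Sigma,\boldsymbol{\alpha},\boldsymbol{\beta},\boldsymbol{\gamma},w)$ subordinate to the surgery, so the heart of the argument is to match this triangle count, in the relevant Spin$^{c}$ structure, with the algebraically defined inclusion.

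First I would fix a doubly pointed Heegaard diagram $(\Sigma,\boldsymbol{\alpha},\boldsymbol{\beta},w,z)$ for $K\subset Y$ computing the filtered complex $C_{\underline{\mathfrak{s}}}$, and enlarge it by a winding region so that replacing the distinguished $\boldsymbol{\beta}$-curve by a curve $\boldsymbol{\gamma}$ realizing the framing $p$ (relative to $\lambda_{can}$) yields a diagram $(\Sigma,\boldsymbol{\alpha},\boldsymbol{\gamma},w)$ for $Y_p(K)$. Pairing with the top-dimensional generator $\Theta_{\boldsymbol{\beta}\boldsymbol{\gamma}}$ of the Floer homology of the $(\boldsymbol{\beta},\boldsymbol{\gamma})$-diagram, the resulting triangle map realizes the cobordism map induced by $\mathfrak{t}$. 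Next I would identify its image under $\Psi$: by the standard analysis of the domains of triangles in the winding region, for a fixed $\mathfrak{t}$ only those triangle classes supported at filtration level $i=0$ contribute, so the image is exactly the copy of $\widehat{CF}(Y,\mathfrak{s})=B_{\underline{\mathfrak{s}}}=C_{\underline{\mathfrak{s}}}\{i=0\}$ sitting inside $\widehat{\mathbb{X}}_{[\underline{\mathfrak{s}}]}$, and the map agrees with the inclusion on the nose.

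The remaining step is to determine which summand $B_{\underline{\mathfrak{s}}}$ receives the image, that is, to match $\mathfrak{t}$ with $\underline{\mathfrak{s}}$. Here I would evaluate $c_{1}(\mathfrak{t})$ against the capped rational Seifert surface $[F_{qC}]$ and compare it with the relative Spin$^{c}$ structure carried by the triangle class; the resulting computation, using $ct=q$ and $\nu=t\lambda_{can}+r\mu$, reproduces exactly the normalization $\langle c_{1}(\mathfrak{t}),[F_{qC}]\rangle+pq-cr=2qA(\underline{\mathfrak{s}})$ already recorded in Section~\ref{sec: mappingcone}. The main obstacle, I expect, is precisely this relative Spin$^{c}$ and Alexander-grading bookkeeping in the rational setting: unlike the integer homology sphere case, the Alexander grading takes values in $\tfrac{1}{q}\mathbb{Z}$ and the surgery longitude $\lambda=\lambda_{can}+p\mu$ need not be primitive on $\partial N(K)$, so one must track the correction terms governed by $c$, $r$, and $t$ carefully to confirm that the grading shift produced by the triangle map is the term $pq-cr$ rather than the cleaner shift of the integral formula. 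Everything else should be a faithful transcription of the Ozsv\'{a}th--Szab\'{o} argument.
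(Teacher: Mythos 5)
You should first be aware of the situation on the paper's side: Theorem~\ref{rao} is \emph{not proved in this paper at all}. It is quoted wholesale from \cite[Theorem 4.2]{r} (Raoux), with notation adapted, and is used as a black box whose only role is to feed into Corollary~\ref{cor:cone}. So there is no internal proof to compare your attempt against; the only meaningful benchmark is Raoux's own argument, which in turn adapts Ozsv\'ath--Szab\'o's naturality statement for the integer surgery formula \cite{Rsurgery} to the setting of relative Spin$^c$ structures and $\tfrac{1}{q}\mathbb{Z}$-valued Alexander gradings.

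Measured against that benchmark, your outline assembles the right ingredients (a Heegaard triple subordinate to the two-handle attachment, the map $x\mapsto F(x\otimes\Theta_{\boldsymbol{\beta}\boldsymbol{\gamma}})$, and the evaluation of $c_1(\mathfrak{t})$ on $[F_{qC}]$), but the central step is asserted rather than proved, and as stated it is not even well-posed. The quasi-isomorphism $\Psi$ of Theorem~\ref{iso} goes \emph{from} the cone $\widehat{\mathbb{X}}_{[\underline{\mathfrak{s}}]}$ \emph{to} $\widehat{CF}(Y_p(K),\cdot)$, so the claim to be established is that $F_{\mathfrak{t}}$ is chain homotopic to $\Psi$ precomposed with the inclusion of $B_{\underline{\mathfrak{s}}}$; this cannot be read off from an analysis of which triangle classes contribute to $F_{\mathfrak{t}}$ alone --- it requires unwinding how $\Psi$ is actually constructed. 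Moreover, the winding-region localization you invoke (``only classes supported at $i=0$ contribute'') is precisely the argument that proves the \emph{large} surgery theorem, identifying $\widehat{CF}(Y_p(K),\cdot)$ with $A_{\underline{\mathfrak{s}}}$ for $p\gg 0$. In the present theorem $p$ is an arbitrary integer --- in the paper's application it is dictated by the contact framing of $L$ and is typically small or negative --- and for such $p$ the quasi-isomorphism of Theorem~\ref{iso} is assembled from exact-triangle and truncation arguments, through which the cobordism map and its Spin$^c$ decoration must be traced. Finally, the matching $\left\langle c_{1}(\mathfrak{t}),[F_{qC}]\right\rangle +pq-cr=2qA(\underline{\mathfrak{s}})$, which you correctly single out as the delicate rational bookkeeping, is exactly the computation that constitutes the heart of \cite[Theorem 4.2]{r}, and your proposal defers it. In short: the strategy is the right one --- essentially the one Raoux follows --- but all three substantive steps are left as ``standard analysis,'' so what you have is a road map rather than a proof.
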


\section{Contact (+1)-surgeries on rational homology 3-spheres with vanishing contact invariants} \label{sec: legendrian}

 For a rationally null-homologous oriented Legendrian knot $L$ in a contact 3-manifold $(Y, \xi)$, one can define the rational Thurston-Bennequin invariant $tb_{\mathbb{Q}}(L)$ and the rational rotation number $rot_{\mathbb{Q}}(L)$. We refer the reader to \cite{be} for more details.

Suppose $Y$ is an oriented rational homology 3-sphere and $\xi$ is a contact structure on $Y$. Let $K$ be an oriented knot in $Y$. Suppose the order of $[K]$ in $H_1(Y;\mathbb{Z})$ is $q$. Let $L$ be an oriented Legendrian knot in $(Y,\xi)$ isotopic to $K$. Let $F$ be a rational Seifert surface for $L$.  Suppose the longitude of $L$ determined by the contact framing is $\lambda_{c}=\lambda_{can}+(p-1)\mu$ for some integer $p$. We use the notation in Section 2 with $K$ replaced by $L$. Performing contact $(+1)$-surgery along $L$, we obtain a contact 3-manifold $(Y_{+1}(L),\xi_{+1}(L))$. This contact $(+1)$-surgery induces a cobordism $X_p(L)$, also denoted by $W$, from  $Y$ to  $Y_{+1}(L)$. Notice that $Y_p(L)=Y_{+1}(L)$.

The contact structure $\xi$ (respectively, $\xi_{+1}(L)$) defines an almost complex structure $J$ (on $W$) along $Y$ (respectively, $Y_{+1}(L)$) by requiring $\xi$ (respectively, $\xi_{+1}(L)$) to be $J$-invariant and $J$ to map the inward (respectively, outward) normal along $Y$ (respectively, $Y_{+1}(L)$) to a vector in $Y$ (respectively, $Y_{+1}(L)$) positively transverse to $\xi$ (respectively, $\xi_{+1}(L)$). This $J$ can be extended to the closure of the complement of a 4-disk $D_H\subset int(H)\subset W$ such that $d_3(\xi_H)=\frac{1}{2}$, where $\xi_H$ denotes the plane field on $\partial D_H$ induced by $J$ (see \cite[Section 3]{dgs}).
The Spin$^c$ structure on $W$ induced by $J$ is denoted by $\mathfrak{t}_1$.

Mimicking the proof of \cite[Proposition 5.2]{dgs}, we have

\begin{lemma}\label{lemma: rot}
$\langle c_{1}(\mathfrak{t}_1), [F_{qC}]\rangle=q\cdot rot_{\mathbb{Q}}(L)$. \hfill $\Box$
\end{lemma}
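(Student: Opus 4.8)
The plan is to compute the first Chern number $\langle c_1(\mathfrak{t}_1),[F_{qC}]\rangle$ by evaluating $c_1(\mathfrak{t}_1)$ on the generator $[F_{qC}]=(j(F)\times\{1\})\cup(-qC)$ of the relevant homology. Since $\mathfrak{t}_1$ is the Spin$^c$ structure induced by the almost complex structure $J$, its first Chern class is $c_1(J)$, and evaluating $c_1(J)$ on a surface amounts to computing the obstruction to extending a section of the complex line bundle $\det_{\mathbb{C}}(TW,J)$, or equivalently counting the rotation of a vector field tangent to the surface relative to a framing coming from $J$-compatible data on $\xi$. Following \cite[Proposition 5.2]{dgs}, I would work piece by piece on the two parts of $F_{qC}$: the rational Seifert surface part in $Y\times I$ and the $q$ copies of the core disk $C$ in the handle.

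First I would trivialize $\xi$ over the rational Seifert surface $j(F)$ (away from $L$) and use this trivialization to express the contribution of $j(F)\times\{1\}$ to $\langle c_1(\mathfrak{t}_1),[F_{qC}]\rangle$ in terms of how the oriented tangent vector to $L=\partial(j(F))$ rotates relative to the contact framing. This is precisely where the rational rotation number $rot_{\mathbb{Q}}(L)$ enters: by definition, $rot_{\mathbb{Q}}(L)$ measures exactly this rotation, suitably normalized by the order $q$ since $\partial F$ is a $q$-fold cover of $L$. Next I would account for the handle part: the core disks $-qC$ are attached along $L\times\{1\}$ with the contact framing $\lambda_c=\lambda_{can}+(p-1)\mu$, and the condition $d_3(\xi_H)=\tfrac{1}{2}$ on $\partial D_H$ pins down how $J$ extends over the handle, fixing the relative Euler number contribution from $C$. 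Assembling the two contributions, the framing discrepancies should cancel against the $pq-cr$ correction term appearing in Theorem~\ref{rao}, leaving exactly $q\cdot rot_{\mathbb{Q}}(L)$.

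The main obstacle I expect is bookkeeping the rational coefficients correctly: because $L$ is only rationally null-homologous, $F$ is a rational Seifert surface whose boundary wraps $q$ times around $L$, and the longitude data $\lambda_c=\lambda_{can}+(p-1)\mu$ together with the integers $c,t,r$ from Section~\ref{sec: invariant} must be tracked carefully so that the normalization by $\tfrac{1}{2q}$ in the Alexander grading is compatible with the evaluation of $c_1$. In the null-homologous case treated in \cite{dgs} these factors are trivial ($q=1$), so the delicate point is verifying that the rational rotation number defined via $F$ matches the Chern-class computation up to the factor of $q$, rather than any other multiple. Once the trivializations over $j(F)$ and over $C$ are glued along $L$ with consistent orientation and framing conventions, the identity $\langle c_1(\mathfrak{t}_1),[F_{qC}]\rangle=q\cdot rot_{\mathbb{Q}}(L)$ should follow, in direct parallel with \cite[Proposition 5.2]{dgs}.
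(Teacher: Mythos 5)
Your proposal takes exactly the route the paper itself takes: the paper gives no written proof of Lemma~\ref{lemma: rot} at all, stating only that it follows by ``mimicking the proof of \cite[Proposition 5.2]{dgs}'', and your outline---splitting $[F_{qC}]$ into the rational Seifert surface part $j(F)\times\{1\}$ and the core-disk part $-qC$, trivializing $\xi$ over $j(F)$, and reading off the winding of the tangent direction of $L$ against that trivialization---is precisely that adaptation, with the correct attention to the factor of $q$ coming from the $q$-fold covering $\partial F\to L$.

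One correction to your final assembly step: the quantity $pq-cr$ and Theorem~\ref{rao} play no role in this lemma, so there is no cancellation against them to arrange. In the correct bookkeeping, Baker--Etnyre define $rot_{\mathbb{Q}}(L)$ as $\tfrac{1}{q}$ times the relative Euler number of $\xi$ over $F$ computed with respect to the tangential framing along $\partial F$ (which covers the tangent direction of $L$ exactly $q$ times), so the surface part contributes $q\cdot rot_{\mathbb{Q}}(L)$ outright; the $q$ copies of the core disk contribute zero, because the section of $\det{}_{\mathbb{C}}(TW,J)$ determined along $L$ by the tangent direction extends without zeros over $C$---the same point as in the null-homologous case of \cite{dgs}. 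The term $pq-cr=q\,(tb_{\mathbb{Q}}(L)+1)$ is the content of the separate Lemma~\ref{lemma: tb}, and it enters only afterwards, when Lemmas~\ref{lemma: rot} and \ref{lemma: tb} are fed into Theorem~\ref{rao} to identify the relative Spin$^c$ structure in Corollary~\ref{cor:cone}; looking for it inside the proof of Lemma~\ref{lemma: rot} would send you hunting for terms that are not there.
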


\begin{lemma}\label{lemma: tb}
$pq-cr=q\cdot (tb_{\mathbb{Q}}(L)+1)$.
\end{lemma}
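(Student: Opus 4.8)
The plan is to compute both sides of the identity $pq - cr = q(tb_{\mathbb{Q}}(L)+1)$ in terms of homological data on the boundary torus $\partial N(L)$, and to match them against the standard rational-framing formula for $tb_{\mathbb{Q}}$. Recall from Section~\ref{sec: invariant} that the rational Seifert surface $F$ meets $\partial N(L)$ in $c$ parallel curves each of class $\nu = t\lambda_{can} + r\mu$, with $ct = q$ and $0 \le r < t$. The rational Thurston--Bennequin invariant is defined (following \cite{be}) as a normalized intersection number between the contact longitude $\lambda_c$ and the rational Seifert framing; concretely, $tb_{\mathbb{Q}}(L) = \tfrac{1}{q}\,(\lambda_c \cdot qF)$, where the product is computed on $\partial N(L)$ and $qF$ denotes the boundary multicurve $c\,\nu$ taken with its total multiplicity $q$. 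The hypothesis $\lambda_c = \lambda_{can} + (p-1)\mu$ fixes the contact framing in the chosen basis $(\lambda_{can}, \mu)$.

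First I would set up the intersection pairing on the torus $\partial N(L)$ with the convention $\lambda_{can} \cdot \mu = 1$, so that for classes $a\lambda_{can}+b\mu$ and $a'\lambda_{can}+b'\mu$ the algebraic intersection is $ab' - a'b$. Then I compute $\lambda_c \cdot \nu = (\lambda_{can}+(p-1)\mu)\cdot(t\lambda_{can}+r\mu) = r - (p-1)t = r - pt + t$. Since $F$ contributes $c$ copies of $\nu$, the total pairing is $c(\lambda_c \cdot \nu) = c(r - pt + t) = cr - cpt + ct$. Using $ct = q$ this becomes $cr - pq + q$. The point is now to reconcile signs and the precise normalization in the definition of $tb_{\mathbb{Q}}$ so that $q\cdot tb_{\mathbb{Q}}(L) = \lambda_c \cdot qF = -(cr - pq + q) = pq - cr - q$, which rearranges to exactly $pq - cr = q(tb_{\mathbb{Q}}(L)+1)$. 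The second step, therefore, is a careful bookkeeping of orientation conventions: one must verify that the sign in \cite{be}'s definition of $tb_{\mathbb{Q}}$, combined with the orientation of $\nu$ induced by $F$ and the orientation of $K_p$ inherited from $-\mu$, produces the stated identity rather than its negative.

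The main obstacle I anticipate is precisely this sign- and normalization-matching, not the algebra itself. The rational Thurston--Bennequin number has several equivalent formulations in the literature (via self-linking of the push-off along the contact framing, via a $\frac{1}{q}$-weighted linking number, or via the framing defect relative to the canonical longitude), and the factor $q$, the divisor $t$, and the residue $r$ all enter the comparison. I would pin down one definition from \cite{be} and carry its conventions consistently, checking the computation against the classical integral case: when $Y$ is an integral homology sphere and $L$ is null-homologous, we have $q = t = c = 1$ and $r = 0$, so the identity collapses to $p = tb(L)+1$, i.e.\ $\lambda_c = \lambda_{can} + tb(L)\mu$, which is the familiar statement that the contact framing differs from the Seifert framing by $tb(L)$. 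This sanity check confirms the normalization and fixes the remaining sign ambiguity, after which the general formula follows by the linear-algebra computation above.
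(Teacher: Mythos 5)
Your proposal is correct and takes essentially the same route as the paper: both identify $tb_{\mathbb{Q}}(L)$ as the Baker--Etnyre rational linking number $\frac{1}{q}\,[F]\bullet\lambda_c$ computed on the torus $\partial N(L)$ in the basis $(\lambda_{can},\mu)$, with $\partial F$ of class $c(t\lambda_{can}+r\mu)=q\lambda_{can}+cr\mu$ and $ct=q$. The only cosmetic difference is that the paper evaluates the intersection in the order $[F]\bullet\lambda_c$, which yields $pq-cr=q(tb_{\mathbb{Q}}(L)+1)$ directly, whereas you compute $\lambda_c\cdot\partial F$ and then fix the sign by your (valid) normalization check against the classical null-homologous case.
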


\begin{proof}
Recall that the longitude of $L$ determined by the contact framing is $\lambda_{c}=\lambda_{can}+(p-1)\mu$.  By definition in Baker-Etnyre \cite{be}, $tb_{\mathbb{Q}}(L)$ is the rational linking number of $L$ and $\lambda_{c}$. So
$$tb_{\mathbb{Q}}(L)+1=lk_{\mathbb{Q}}(L, \lambda_{c})+1=\frac{1}{q} [F]\bullet \lambda_{c}+1$$
$$=\frac{1}{q} (q\lambda_{can}+cr\mu)\bullet (\lambda_{can}+(p-1)\mu)+1
=\frac{1}{q}(pq-cr),$$
where the second intersection product is on $\partial(\overline{Y\setminus N(L)})$.
\end{proof}

By Baldwin \cite[Theorem 1.2]{bal} (see also Mark-Tosun \cite[Theorem 3.1]{mt}),
there exists a Spin$^{c}$ structure $\mathfrak{t}_2$ on $-W$ such that the homomorphism $$F_{-W,\mathfrak{t}_2}:\widehat{HF}(-Y)\rightarrow \widehat{HF}(-Y_{+1}(L))$$ satisfies
$$F_{-W,\mathfrak{t}_2}(c(\xi))=c(\xi_{+1}(L)).$$ The following lemma is similar to \cite[Corollary 3.6]{mt}.

\begin{lemma}\label{mt}
The Spin$^{c}$ structure $\mathfrak{t}_2$ satisfies $$\langle c_{1}(\mathfrak{t}_2), [F_{qC}]\rangle=\pm\langle c_{1}(\mathfrak{t}_1), [F_{qC}]\rangle=\pm q\cdot rot_{\mathbb{Q}}(L).$$
\end{lemma}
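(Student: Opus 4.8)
The plan is to relate the two Spin$^{c}$ structures $\mathfrak{t}_1$ and $\mathfrak{t}_2$ through the behavior of the contact invariant under the surgery cobordism, mirroring the argument of Mark--Tosun \cite[Corollary 3.6]{mt}. The starting observation is that the cobordism $W = X_p(L)$ (and its orientation reversal $-W$) has $H_2(W;\mathbb{Z})$ generated by the single class $[F_{qC}]$, so any Spin$^{c}$ structure on $W$ restricting to a fixed $\mathfrak{s}$ on $Y$ is completely determined by the evaluation $\langle c_1(\cdot), [F_{qC}]\rangle$. Hence to pin down $\mathfrak{t}_2$ it suffices to compute this one pairing, and the claim is precisely that it equals $\pm\langle c_1(\mathfrak{t}_1),[F_{qC}]\rangle$, which by Lemma~\ref{lemma: rot} is $\pm q\cdot rot_{\mathbb{Q}}(L)$.

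First I would recall how $\mathfrak{t}_2$ arises. By Baldwin's theorem the map $F_{-W,\mathfrak{t}_2}$ sends $c(\xi)$ to $c(\xi_{+1}(L))$, and the key structural input is that the contact $(+1)$-surgery cobordism map factors (via $\Psi$ and Theorem~\ref{rao}) through the mapping cone description: the map induced by a Spin$^{c}$ structure $\mathfrak{t}$ corresponds to the inclusion of the summand $B_{\underline{\mathfrak{s}}}$ into $\widehat{\mathbb{X}}_{[\underline{\mathfrak{s}}]}$, where $\underline{\mathfrak{s}}$ is determined by the equation $\langle c_1(\mathfrak{t}),[F_{qC}]\rangle + pq - cr = 2qA(\underline{\mathfrak{s}})$. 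The plan is to exploit the conjugation symmetry of Heegaard Floer theory: the conjugate Spin$^{c}$ structure $\overline{\mathfrak{t}_2}$ induces a map that is identified, under the conjugation-induced isomorphisms on $\widehat{HF}$, with the map coming from $\mathfrak{t}_2$. Because the contact class $c(\xi)$ is carried to the distinguished generator in the appropriate summand, the Spin$^{c}$ structure $\mathfrak{t}_2$ and its conjugate must both be compatible with the surgery formula sending $c(\xi)$ to $c(\xi_{+1}(L))$.

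The crux of the computation is then to show that the relevant Alexander grading is forced to take the value $A(\underline{\mathfrak{s}}) = \tfrac12(rot_{\mathbb{Q}}(L)+tb_{\mathbb{Q}}(L)+1)$ (up to conjugation sending $rot_{\mathbb{Q}}$ to $-rot_{\mathbb{Q}}$), exactly as in the integral case. Substituting Lemma~\ref{lemma: tb}, namely $pq-cr = q(tb_{\mathbb{Q}}(L)+1)$, into the defining relation $\langle c_1(\mathfrak{t}_2),[F_{qC}]\rangle + pq - cr = 2qA(\underline{\mathfrak{s}})$ then yields $\langle c_1(\mathfrak{t}_2),[F_{qC}]\rangle = \pm q\cdot rot_{\mathbb{Q}}(L)$, completing the identification with $\langle c_1(\mathfrak{t}_1),[F_{qC}]\rangle$ from Lemma~\ref{lemma: rot}. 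The conjugation symmetry $\mathfrak{t}_2 \leftrightarrow \overline{\mathfrak{t}_2}$ is what produces the ambiguous sign $\pm$: conjugation negates the pairing $\langle c_1(\cdot),[F_{qC}]\rangle$, and both resulting Spin$^{c}$ structures are consistent with Baldwin's identity, so the invariant $tb_{\mathbb{Q}}$ and the magnitude $|rot_{\mathbb{Q}}|$ are the genuinely determined quantities.

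The main obstacle I anticipate is verifying that the contact class $c(\xi)\in\widehat{HF}(-Y)$ really does land in the $B$-summand indexed by the precise $\underline{\mathfrak{s}}$ predicted by $d_3$-invariant and grading considerations, rather than merely in some summand of the mapping cone. In the null-homologous $S^3$ setting this is handled by the grading computation in Mark--Tosun using the $d_3$ normalization $d_3(\xi_H)=\tfrac12$; here I would need to confirm that the rational Alexander grading $A$, as defined via $A(\underline{\mathfrak{s}})=\tfrac{1}{2q}(\langle c_1(\underline{\mathfrak{s}}),[F]\rangle+q)$, interacts correctly with the Spin$^{c}$ bookkeeping so that the same grading argument selects the summand indexed by $A(\underline{\mathfrak{s}})=\tfrac12(rot_{\mathbb{Q}}(L)+tb_{\mathbb{Q}}(L)+1)$. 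Once this localization of $c(\xi)$ is established, the remaining algebra is the routine substitution described above, and the conjugation symmetry delivers the stated sign ambiguity.
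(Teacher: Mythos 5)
There is a genuine gap: you correctly assemble the ingredients (Baldwin's theorem, the fact that $H_2(W;\mathbb{Z})$ is generated by $[F_{qC}]$, Lemmata~\ref{lemma: rot} and \ref{lemma: tb}), but the step you yourself flag as ``the crux'' --- that the Alexander grading is \emph{forced} to equal $\tfrac12(\pm rot_{\mathbb{Q}}(L)+tb_{\mathbb{Q}}(L)+1)$ --- is never actually argued, and neither the mapping cone of Theorem~\ref{rao} nor conjugation symmetry can supply it. Your route is in fact backwards relative to the logical structure of the paper: Theorem~\ref{rao} translates knowledge of $\langle c_1(\mathfrak{t}_2),[F_{qC}]\rangle$ into knowledge of which summand $B_{\underline{\mathfrak{s}}}$ carries $c(\xi)$ (this is exactly how Corollary~\ref{cor:cone} is deduced \emph{from} Lemma~\ref{mt}), so using it to prove Lemma~\ref{mt} presupposes an independent localization of $c(\xi_{+1}(L))$ in the cone, which you do not have. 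The argument that actually forces the value is the one the paper carries out: by the degree shift formula, Baldwin's identity $F_{-W,\mathfrak{t}_2}(c(\xi))=c(\xi_{+1}(L))$ pins down $c_1^2(\mathfrak{t}_2)_{-W}$ in terms of $d_3(\xi_{+1}(L))-d_3(\xi)$; computing the same difference from the almost complex structure $J$ on $W'=W-int(D_H)$ (using $\chi(W)=1$ and the normalization $d_3(\xi_H)=\tfrac12$) gives $c_1^2(\mathfrak{t}_2)_{-W}=-c_1^2(\mathfrak{t}_1)_W$; and since $H_2(W;\mathbb{Q})$ is generated by $[F_{qC}]$ with $[F_{qC}]^2=q(pq-cr)\neq 0$, equality of squares yields $\langle c_1(\mathfrak{t}_2),[F_{qC}]\rangle=\pm\langle c_1(\mathfrak{t}_1),[F_{qC}]\rangle$. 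Note in particular that the sign ambiguity comes from this purely algebraic fact (a rank-one intersection form determines $c_1$ only up to sign from its square), not from conjugation symmetry; conjugation would in any case move $\mathfrak{s}_{\xi}$ to $\overline{\mathfrak{s}_{\xi}}$, so it does not directly produce a second Spin$^c$ structure satisfying Baldwin's identity for the same contact class.

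A second, smaller omission: your argument (and any $d_3$-based argument) needs the hypothesis that $c_1(\xi_{+1}(L))$ is torsion and that $[F_{qC}]^2\neq 0$, i.e.\ $pq-cr\neq 0$. The paper treats the degenerate case $pq-cr=0$ (equivalently $r=p=0$) separately, by observing that there the restriction $H^2(X_p(L);\mathbb{Q})\to H^2(Y_{+1}(L);\mathbb{Q})$ is an isomorphism and $\mathfrak{t}_1,\mathfrak{t}_2$ restrict to the same Spin$^c$ structure on $Y_{+1}(L)$, whence the evaluations agree on the nose. Your proposal would fail in this case without such a separate argument, since the self-intersection of $[F_{qC}]$ vanishes and $d_3(\xi_{+1}(L))$ is not defined as a rational number.
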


\begin{proof}

First, assume that $pq-cr=0$, which is equivalent to $r=p=0$. In this case, the map $H^2(X_p(L);\mathbb{Q})\to H^2(Y_{+1}(L);\mathbb{Q})$ induced by inclusion
is an isomorphism. Combining this with the fact that $\mathfrak{t}_1,\mathfrak{t}_2$ induce the same Spin$^c$ structure on $Y_{+1}(L)$, i.e. the Spin$^c$ structure induced by
$\xi_{+1}(L)$, we have $$\langle c_{1}(\mathfrak{t}_2), [F_{qC}]\rangle =\langle c_{1}(\mathfrak{t}_1), [F_{qC}]\rangle=q\cdot rot_{\mathbb{Q}}(L).$$

Now suppose $pq-cr\neq 0$. In this case, $Y_{+1}(L)$ is a rational homology 3-sphere and $c_1(\xi_{+1}(L))$ is torsion. By the degree shift formula in Heegaard Floer
homology, we have
\begin{equation} \label{c_1^2(t_2)}
\frac{1}{4}(c_1^2(\mathfrak{t}_2)_{-W}-3\sigma (-W)-2\chi (-W))=-d_3(\xi_{+1}(L))+d_3(\xi).
\end{equation}

Let $W'$ denote $W-int(D_H)$.
Since $\partial W'=Y_{+1}(L)\sqcup (-Y)\sqcup (-\partial D_H)$ and the almost complex structure $J$ induces the plane fields $\xi_{+1}(L),\xi$ and $\xi_H$ on $Y_{+1}(L),Y$ and
$\partial D_H$, respectively, we have
$$d_3(\xi_{+1}(L))-d_3(\xi)-d_3(\xi_H)=\frac{1}{4}(c_1^2(J)_{W'}-3\sigma (W')-2\chi (W'))$$
$$=\frac{1}{4}(c_1^2(\mathfrak{t}_1)_{W}-3\sigma (W)-2\chi (W))+\frac{1}{2}.$$
Note that $\chi(W)=1$ and $d_3(\xi_H)=\frac{1}{2}$. Thus
$$d_3(\xi_{+1}(L))-d_3(\xi)-\frac{1}{2}=\frac{1}{4}(c_1^2(\mathfrak{t}_1)_{W}-3\sigma (W)).$$
With (\ref{c_1^2(t_2)}), we get $c_{1}^{2}(\mathfrak{t}_2)_{-W}=-c_{1}^{2}(\mathfrak{t}_1)_{W}$. Since $H_2(W;\mathbb{Q})$ is generated by $[F_{qC}]$ and
$[F_{qC}]^2=q(pq-cr)\neq 0$, we obtain
$$\langle c_{1}(\mathfrak{t}_2), [F_{qC}]\rangle =\pm\langle c_{1}(\mathfrak{t}_1), [F_{qC}]\rangle=\pm q\cdot rot_{\mathbb{Q}}(L).$$
\end{proof}

For $\underline{\mathfrak{s}}\in \underline{\text{Spin}^c}(-Y,L)$, by Theorem~\ref{iso}, there exists a quasi-isomorphism of the complexes $$\Psi:\widehat{\mathbb{X}}_{[\underline{\mathfrak{s}}]}\cong \widehat{CF}(-Y_p(L), G_{-Y_{p}(L), -L_{p}}(\underline{\mathfrak{s}})).$$
The isomorphism
$$H_*(\widehat{\mathbb{X}}_{[\underline{\mathfrak{s}}]})\cong \widehat{HF}(-Y_p(L), G_{-Y_{p}(L), -L_{p}}(\underline{\mathfrak{s}}))$$
induced by $\Psi$ is denoted by $\Psi_*$.
Let $\mathfrak{s}_{\xi}$ (respectively, $\mathfrak{s}_{\xi_{+1}(L)}$) denote the Spin$^c$ structure on $Y$ (respectively, $Y_{+1}(L)$) induced by $\xi$ (respectively, $\xi_{+1}(L)$).
By Theorem~\ref{rao}, the map $$\widehat{CF}(-Y,\mathfrak{s}_{\xi})\to\widehat{CF}(-Y_p(L),\mathfrak{s}_{\xi_{+1}(L)})$$ induced by $\mathfrak{t}_2$ corresponds via $\Psi$ to the
inclusion of $B_{\underline{\mathfrak{s}}}$ in $\widehat{\mathbb{X}}_{[\underline{\mathfrak{s}}]}$, where $\underline{\mathfrak{s}}\in \underline{\text{Spin}^c}(-Y,L)$ satisfies $G_{-Y,L}(\underline{\mathfrak{s}})=\mathfrak{s}_{\xi}$ and $$\left\langle c_{1}\left(\mathfrak{t}_2\right),[F_{qC}]\right\rangle -pq+cr=2qA(\underline{\mathfrak{s}}).$$
Applying Lemmata~\ref{lemma: tb} and \ref{mt}, we have

\begin{corollary}\label{cor:cone}
Via $\Psi_*$, the contact invariant $c(\xi_{+1}(L))\in \widehat{HF}(-Y_{p}(L))$ is the image of $c(\xi)$ under the homomorphism of homologies induced by inclusion  $$B_{\underline{\mathfrak{s}}}\hookrightarrow \widehat{\mathbb{X}}_{[\underline{\mathfrak{s}}]},$$ where $\underline{\mathfrak{s}}\in \underline{\text{Spin}^c}(-Y,L)$ satisfies
$G_{-Y,L}(\underline{\mathfrak{s}})=\mathfrak{s}_{\xi}$ and $$ 2A(\underline{\mathfrak{s}})=-tb_{\mathbb{Q}}(L)\pm rot_{\mathbb{Q}}(L)-1.$$
\end{corollary}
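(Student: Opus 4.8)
The plan is to read the corollary off directly, assembling the naturality of the contact invariant, Raoux's identification of the surgery cobordism map with an inclusion into the mapping cone (Theorem~\ref{rao}), and a short arithmetic computation of the Alexander grading $A(\underline{\mathfrak{s}})$. First I would recall the Baldwin--Mark--Tosun naturality already invoked above: there is a Spin$^c$ structure $\mathfrak{t}_2$ on $-W$ with $F_{-W,\mathfrak{t}_2}(c(\xi))=c(\xi_{+1}(L))$. The entire task is then to translate the map $F_{-W,\mathfrak{t}_2}$ into the language of the mapping cone $\widehat{\mathbb{X}}_{[\underline{\mathfrak{s}}]}$ and to determine which relative Spin$^c$ class $\underline{\mathfrak{s}}$ is responsible.

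Next I would apply Theorem~\ref{rao} to the map $\widehat{CF}(-Y,\mathfrak{s}_\xi)\to\widehat{CF}(-Y_p(L),\mathfrak{s}_{\xi_{+1}(L)})$ induced by $\mathfrak{t}_2$. By that theorem it corresponds, through the quasi-isomorphism $\Psi$, to the inclusion $B_{\underline{\mathfrak{s}}}\hookrightarrow\widehat{\mathbb{X}}_{[\underline{\mathfrak{s}}]}$, where $\underline{\mathfrak{s}}$ is the relative Spin$^c$ structure with $G_{-Y,L}(\underline{\mathfrak{s}})=\mathfrak{s}_\xi$ and $\langle c_1(\mathfrak{t}_2),[F_{qC}]\rangle-pq+cr=2qA(\underline{\mathfrak{s}})$. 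Since $c(\xi)\in\widehat{HF}(-Y,\mathfrak{s}_\xi)=H_*(B_{\underline{\mathfrak{s}}})$, passing to homology and composing with $\Psi_*$ shows that $c(\xi_{+1}(L))$ is precisely the image of $c(\xi)$ under the homomorphism induced by this inclusion, which is the first assertion of the corollary.

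Finally I would pin down $\underline{\mathfrak{s}}$ by computing its Alexander grading. Substituting Lemma~\ref{mt}, $\langle c_1(\mathfrak{t}_2),[F_{qC}]\rangle=\pm q\cdot rot_{\mathbb{Q}}(L)$, and Lemma~\ref{lemma: tb}, $pq-cr=q(tb_{\mathbb{Q}}(L)+1)$, into the grading constraint above and dividing by $q$ gives
$$2A(\underline{\mathfrak{s}})=\pm rot_{\mathbb{Q}}(L)-(tb_{\mathbb{Q}}(L)+1)=-tb_{\mathbb{Q}}(L)\pm rot_{\mathbb{Q}}(L)-1,$$
as claimed.

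The argument is essentially bookkeeping, so I do not expect a serious obstacle; the only point requiring care is that the $\pm$ ambiguity produced by Lemma~\ref{mt} — reflecting that the sign of $\langle c_1(\mathfrak{t}_2),[F_{qC}]\rangle$ relative to $\langle c_1(\mathfrak{t}_1),[F_{qC}]\rangle$ is undetermined — must be carried faithfully into the final formula, where it matches the $\pm$ in the statement. I would also be careful that $\underline{\mathfrak{s}}$ is only meaningful up to its equivalence class (addition of multiples of $PD[\lambda]$), since this is the data on which the summand $\widehat{\mathbb{X}}_{[\underline{\mathfrak{s}}]}$ depends, and that the identification of $c(\xi)$ with a class in $H_*(B_{\underline{\mathfrak{s}}})$ uses the specific representative whose Alexander grading is the one just computed.
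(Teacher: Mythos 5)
Your proposal is correct and is essentially identical to the paper's own argument: the paper likewise combines the Baldwin--Mark--Tosun naturality $F_{-W,\mathfrak{t}_2}(c(\xi))=c(\xi_{+1}(L))$ with Theorem~\ref{rao} applied to $-Y$ (yielding the constraint $\langle c_1(\mathfrak{t}_2),[F_{qC}]\rangle-pq+cr=2qA(\underline{\mathfrak{s}})$), and then substitutes Lemmata~\ref{lemma: tb} and \ref{mt} to obtain $2A(\underline{\mathfrak{s}})=-tb_{\mathbb{Q}}(L)\pm rot_{\mathbb{Q}}(L)-1$. Your closing remarks on carrying the $\pm$ ambiguity and on $\underline{\mathfrak{s}}$ versus its equivalence class are sound points of care, not deviations from the paper's proof.
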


\begin{lemma}\label{lemma: im}
Suppose $c(\xi)\neq 0$. For $\underline{\mathfrak{s}}$ in the above corollary,
$c(\xi)\not\in \text{Im}(v_{\underline{\mathfrak{s}}\ast})$ if and only if both $$\nu_{c(\xi)}(-Y, K)=-\tau^{\ast}_{c(\xi)}(Y,K)+1 ~~\text{and}~~ \tau^{\ast}_{c(\xi)}(Y,K)=\frac{1}{2}(tb_{\mathbb{Q}}(L)\mp rot_{\mathbb{Q}}(L)+1).$$
\end{lemma}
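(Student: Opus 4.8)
The plan is to read the condition $c(\xi)\in\text{Im}(v_{\underline{\mathfrak s}\ast})$ off the rational $\nu$-invariant of $K$ in $-Y$ and then convert the resulting numerical condition into the quantities in the statement. First I would note that the vertical map $v_{\underline{\mathfrak s}}\colon A_{\underline{\mathfrak s}}=C_{\underline{\mathfrak s}}\{\max(i,j)=0\}\to B_{\underline{\mathfrak s}}=C_{\underline{\mathfrak s}}\{i=0\}$ appearing in Corollary~\ref{cor:cone} is exactly the $k=0$ member $v_0$ of the family used in Definition~\ref{def:nu} for the complex $C_{\underline{\mathfrak s}}$. Using the standard nesting $\text{Im}(v_{k\ast})\subseteq\text{Im}(v_{(k+1)\ast})$ together with the shift-invariance of the defining expression $A(\underline{\mathfrak s})+k$ under $\underline{\mathfrak s}\mapsto\underline{\mathfrak s}+PD[\mu]$ (where $A(\underline{\mathfrak s}+PD[\mu])=A(\underline{\mathfrak s})+1$, so that $v_k$ on $C_{\underline{\mathfrak s}}$ is carried to $v_0$ on a relative $\text{Spin}^c$ structure of Alexander grading $A(\underline{\mathfrak s})+k$), the minimum defining $\nu_{c(\xi)}(-Y,K)$ is realized along this single family. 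This should yield the clean equivalence
\[
c(\xi)\in\text{Im}(v_{\underline{\mathfrak s}\ast})\iff A(\underline{\mathfrak s})\ge\nu_{c(\xi)}(-Y,K),
\]
so that $c(\xi)\notin\text{Im}(v_{\underline{\mathfrak s}\ast})$ is equivalent to the strict inequality $A(\underline{\mathfrak s})<\nu_{c(\xi)}(-Y,K)$.

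Next I would substitute the value $A(\underline{\mathfrak s})=\tfrac12(-tb_{\mathbb Q}(L)\pm rot_{\mathbb Q}(L)-1)$ from Corollary~\ref{cor:cone} and abbreviate $B=\tfrac12(tb_{\mathbb Q}(L)\mp rot_{\mathbb Q}(L)+1)=-A(\underline{\mathfrak s})$ and $\tau^\ast=\tau^\ast_{c(\xi)}(Y,K)$. Since $c(\xi)\in\widehat{HF}(-Y,\mathfrak s_\xi)$ forces every relevant relative $\text{Spin}^c$ structure to lie over $\mathfrak s_\xi$, the numbers $A(\underline{\mathfrak s})$ and $\nu_{c(\xi)}(-Y,K)$ differ by an integer, so $A(\underline{\mathfrak s})<\nu_{c(\xi)}(-Y,K)$ is the same as $\nu_{c(\xi)}(-Y,K)\ge A(\underline{\mathfrak s})+1=-B+1$. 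By Lemma~\ref{lemma:nutau} and Proposition~\ref{prop: dual} one has $\nu_{c(\xi)}(-Y,K)\in\{-\tau^\ast,\,-\tau^\ast+1\}$, and the inequality $\nu_{c(\xi)}(-Y,K)\ge -B+1$ then reads $\tau^\ast\le B-1$ in the case $\nu_{c(\xi)}(-Y,K)=-\tau^\ast$, and $\tau^\ast\le B$ in the case $\nu_{c(\xi)}(-Y,K)=-\tau^\ast+1$.

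Finally I would invoke the Legendrian bound $tb_{\mathbb Q}(L)+|rot_{\mathbb Q}(L)|\le 2\tau^\ast_{c(\xi)}(Y,K)-1$ from \cite{lw}. Since $|rot_{\mathbb Q}(L)|\ge\mp rot_{\mathbb Q}(L)$ for either choice of sign, this gives $\tau^\ast\ge B$, which makes $\tau^\ast\le B-1$ impossible and hence excludes the case $\nu_{c(\xi)}(-Y,K)=-\tau^\ast$ altogether, while in the case $\nu_{c(\xi)}(-Y,K)=-\tau^\ast+1$ it collapses $\tau^\ast\le B$ to the equality $\tau^\ast=B$. Therefore $c(\xi)\notin\text{Im}(v_{\underline{\mathfrak s}\ast})$ holds exactly when $\nu_{c(\xi)}(-Y,K)=-\tau^\ast_{c(\xi)}(Y,K)+1$ and $\tau^\ast_{c(\xi)}(Y,K)=\tfrac12(tb_{\mathbb Q}(L)\mp rot_{\mathbb Q}(L)+1)$, which is the claim. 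I expect the main obstacle to be the first paragraph: verifying that $v_{\underline{\mathfrak s}}$ really is the $k=0$ map and that the images $\text{Im}(v_{k\ast})$ nest so that $\nu$ is governed by a single threshold, so the equivalence with $A(\underline{\mathfrak s})\ge\nu_{c(\xi)}(-Y,K)$ is exact. The remaining steps are coset bookkeeping plus one application of the Thurston--Bennequin bound.
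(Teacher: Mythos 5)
Your proposal is correct and follows essentially the same route as the paper: translate $c(\xi)\not\in \text{Im}(v_{\underline{\mathfrak{s}}\ast})$ into the threshold inequality $A(\underline{\mathfrak{s}})<\nu_{c(\xi)}(-Y,K)$, use Lemma~\ref{lemma:nutau} and Proposition~\ref{prop: dual} to pin $\nu_{c(\xi)}(-Y,K)$ to $\{-\tau^{\ast},-\tau^{\ast}+1\}$, and then apply the bound of \cite[Theorem 1.1]{lw} to rule out the first case and force the equality $\tau^{\ast}_{c(\xi)}(Y,K)=\frac{1}{2}(tb_{\mathbb{Q}}(L)\mp rot_{\mathbb{Q}}(L)+1)$. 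The only difference is that you spell out the nesting $\text{Im}(v_{k\ast})\subseteq\text{Im}(v_{(k+1)\ast})$ and the $PD[\mu]$-shift identification, which the paper absorbs into the phrase ``by the definition of $\nu_{c(\xi)}(-Y,K)$.''
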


\begin{proof}
By the definition of $\nu_{c(\xi)}(-Y, K)$, the contact invariant $c(\xi)\not\in \text{Im}(v_{\underline{\mathfrak{s}}\ast})$ if and only if $A(\underline{\mathfrak{s}})<\nu_{c(\xi)}(-Y, K)$. By Lemma~\ref{lemma:nutau} and Proposition~\ref{prop: dual},  $\nu_{c(\xi)}(-Y, K)$ equals either $-\tau^{\ast}_{c(\xi)}(Y, K)$ or $-\tau^{\ast}_{c(\xi)}(Y, K)+1$. On the other hand, it follows from \cite[Theorem 1.1]{lw} that $\frac{1}{2}(-tb_{\mathbb{Q}}(L)\pm rot_{\mathbb{Q}}(L)-1)\geq -\tau^{\ast}_{c(\xi)}(Y, K)$. Applying Corollary~\ref{cor:cone}, we conclude that $\nu_{c(\xi)}(-Y, K)=-\tau^{\ast}_{c(\xi)}(Y,K)+1$ and $-\tau^{\ast}_{c(\xi)}(Y,K)=A(\underline{\mathfrak{s}})=\frac{1}{2}(-tb_{\mathbb{Q}}(L)\pm rot_{\mathbb{Q}}(L)-1)$.
\end{proof}

\begin{proof}[Proof of Theorem~\ref{Theorem:Main1}]
Since $\frac{1}{2}(tb_{\mathbb{Q}}(L)\pm rot_{\mathbb{Q}}(L)+1)<\tau^{\ast}_{c(\xi)}(Y,K)=-\tau_{c(\xi)}(-Y,K)$,  Lemma~\ref{lemma: im} implies that $c(\xi)$ lies in the image of $v_{\underline{\mathfrak{s}}\ast}$. It suffices to find a cycle $c\in A_{\underline{\mathfrak{s}}}$ such that $v_{\underline{\mathfrak{s}}\ast}([c])=c(\xi)\in H_{\ast}(B_{\underline{\mathfrak{s}}})$, while $h_{\underline{\mathfrak{s}}\ast}([c])=0$. Recall that $A_{\underline{\mathfrak{s}}}$ is the subquotient complex  $C_{\underline{\mathfrak{s}}}\{\text{max}(i,j)=0\}$ of $CFK^{\infty}(-Y,K,\underline{\mathfrak{s}})$. By the definition of $\tau_{c(\xi)}(-Y,K)$, there is a cycle $c$ in the vertical complex $B_{\underline{\mathfrak{s}}}=C_{\underline{\mathfrak{s}}}\{i=0\}$ that is supported in $C_{\underline{\mathfrak{s}}}\{i=0, j\leq  \tau_{c(\xi)}(-Y,K)-A(\underline{\mathfrak{s}})\}$ and $[c]=c(\xi)\in H_{\ast}(B_{\underline{\mathfrak{s}}})$. By our assumption, $A(\underline{\mathfrak{s}})=\frac{1}{2}(\pm rot_{\mathbb{Q}}(L)-tb_{\mathbb{Q}}(L)-1)>\tau_{c(\xi)}(-Y,K)$, so $c$ can be considered as a cycle in $A_{\underline{\mathfrak{s}}}$, and since it lies in the subcomplex with $j<0$, it vanishes under $h_{\underline{\mathfrak{s}}}$.
\end{proof}

\begin{remark}
\begin{figure}[htb]
\begin{overpic}
{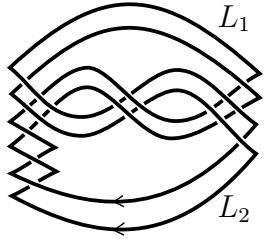}
\put(80, 8){$L_2$}
\put(80, 80){$L_{1}$}
\end{overpic}
\caption{A Legendrian link $L_{1}\cup L_{2}$, where $L_2$ is a Legendrian push-off of $L_1$.}
\label{figure:rot}
\end{figure}
For a Legendrian knot $L$ in $(S^3,\xi_{st})$, if $rot(L)\neq 0$, then \cite[Theorem 1.1]{gol} tells us that $\xi_{+1}(L)$ has vanishing contact invariant. However, this is not true for Legendrian knots in contact rational homology 3-spheres. For example, in Figure~\ref{figure:rot}, $L_1$ is a Legendrian right handed trefoil with $tb(L_1)=0$ and $rot(L_1)=-1$, and $L_2$ is a Legendrian push-off of $L_1$. The 3-manifold $S^3(L_{2}^{-})$ is an integral homology 3-sphere. By \cite[Lemma 3.1]{go},  the Thurston-Bennequin invariant of $L_1$ in  $(S^3(L_{2}^{-}),\xi_{st}(L_2^-))$ is $0$ and the rotation number of $L_1$ in $(S^3(L_{2}^{-}),\xi_{st}(L_2^-))$ is $-1$. Contact $(+1)$-surgery along $L_1$ in  $(S^3(L_{2}^{-}),\xi_{st}(L_2^-))$ yields $(S^3, \xi_{st})$ (see \cite{dg1}) which certainly has nonvanishing contact invariant.
\end{remark}

Now we turn to some applications of Theorem~\ref{Theorem:Main1}. First we recall the following proposition.
\begin{proposition} \label{prop: add} \cite[Lemma 3.2]{lw}
For $i=1,2$, suppose that $L_{i}$ is a Legendrian knot in a contact rational homology 3-sphere $(Y_{i}, \xi_{i})$.  Then the rational Thurston-Bennequin invariant and the rational rotation number of the Legendrian knot $L_{1}\sharp L_{2}$ in the contact 3-manifold $(Y_{1}\sharp Y_{2},  \xi_{1}\sharp\xi_{2})$  satisfy
$$tb_{\mathbb{Q}}(L_{1}\sharp L_{2})=tb_{\mathbb{Q}}(L_{1})+tb_{\mathbb{Q}}(L_{2})+1,$$
$$rot_{\mathbb{Q}}(L_{1}\sharp L_{2})=rot_{\mathbb{Q}}(L_{1})+rot_{\mathbb{Q}}(L_{2}).$$
\end{proposition}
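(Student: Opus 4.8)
The plan is to work directly from the Baker--Etnyre definitions (\cite{be}) of the rational invariants via rational Seifert surfaces, and to track how those surfaces, together with the relevant linking and Euler-class data, decompose under the Legendrian connected sum. Write $q_i$ for the order of $[L_i]$ in $H_1(Y_i;\mathbb{Z})$, and recall that in $H_1(Y_1\sharp Y_2;\mathbb{Z})\cong H_1(Y_1;\mathbb{Z})\oplus H_1(Y_2;\mathbb{Z})$ one has $[L_1\sharp L_2]=([L_1],[L_2])$, so that $q:=\mathrm{lcm}(q_1,q_2)$ is the order of $[L_1\sharp L_2]$; set $m_i=q/q_i$. The first step is to construct a rational Seifert surface. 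Given rational Seifert surfaces $F_i\to Y_i$ for $L_i$, I would build a rational Seifert surface $F\to Y_1\sharp Y_2$ for $L=L_1\sharp L_2$ by taking $m_i$ parallel copies of $F_i$ on the $Y_i$-side and splicing them across the connect-sum region (a Darboux ball) by a collection of bands, chosen so that the resulting connected surface has $\partial F$ a $q$-fold cover of $L$. The bookkeeping here (matching the $c_i$ boundary components of the copies of $F_i$ so that $F$ is connected and its boundary covers $L$ exactly $q$ times) is routine but must be handled with care; this is where the multiplicities $m_i$ enter.

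For the rotation number, recall that the contact structure on the connected sum is $\xi_1\sharp\xi_2$, formed inside a Darboux ball where it is standard, and that $rot_{\mathbb{Q}}(L)$ is $\frac{1}{q}$ times the relative Euler number of $\xi|_F$ computed against the trivialization of $\xi$ along $\partial F$ coming from the oriented tangent to $L$. Since the splicing bands lie in a region where both $\xi$ and the tangent framing are standard, they contribute nothing to the relative Euler number, while the $m_i$ copies of $F_i$ contribute $m_i\,q_i\,rot_{\mathbb{Q}}(L_i)=q\,rot_{\mathbb{Q}}(L_i)$. Dividing by $q$ gives $rot_{\mathbb{Q}}(L)=rot_{\mathbb{Q}}(L_1)+rot_{\mathbb{Q}}(L_2)$ with no correction term.

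For the Thurston--Bennequin invariant I would use $tb_{\mathbb{Q}}(L)=lk_{\mathbb{Q}}(L,\lambda_c)=\frac{1}{q}\,([F]\bullet\lambda_c)$ as in Lemma~\ref{lemma: tb}, splitting the intersection count into the two summand regions and the connecting Darboux ball. Over the $Y_i$-side the $m_i$ copies contribute $m_i\,([F_i]\bullet\lambda_{c,i})=m_i\,q_i\,tb_{\mathbb{Q}}(L_i)=q\,tb_{\mathbb{Q}}(L_i)$. The genuinely new intersections occur inside the Darboux ball, where the picture is exactly the standard Legendrian connected sum in $(\mathbb{R}^3,\xi_{st})$; there the contact push-off of the connecting band meets the $q$-fold cover $\partial F$ and contributes $+q$. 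Summing the three contributions and dividing by $q$ then yields $tb_{\mathbb{Q}}(L)=tb_{\mathbb{Q}}(L_1)+tb_{\mathbb{Q}}(L_2)+1$, the asymmetry between the two formulas arising precisely because the band links (but does not rotate relative to the tangent framing).

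I expect the main obstacle to be this local computation in the connecting ball: verifying that the band contributes $+q$ to $[F]\bullet\lambda_c$ (equivalently $+1$ after normalization) and nothing to the relative Euler number, uniformly in the multiplicities $m_i$. The cleanest way to pin this down is to reduce the local model to the standard $(\mathbb{R}^3,\xi_{st})$ computation for ordinary Legendrian connected sum, where $tb$ is classically additive up to $+1$ and $rot$ is additive with no correction, and then to observe that passing to the $q$-fold cover of the knot merely scales the linking contribution of the band by $q$ while leaving the tangential trivialization standard.
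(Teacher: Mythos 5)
You should know at the outset that the paper does not prove Proposition~\ref{prop: add} at all: it is imported wholesale from \cite[Lemma 3.2]{lw}, and no argument for it appears in the text. So there is no internal proof to compare yours against; I can only assess your proposal on its own merits and against the paper's surrounding machinery.

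On those terms your strategy is sound and is the natural generalization of the classical null-homologous argument. The identification of the order $q=\mathrm{lcm}(q_1,q_2)$ of $[L_1\sharp L_2]$ in $H_1(Y_1;\mathbb{Z})\oplus H_1(Y_2;\mathbb{Z})$ is correct; building the rational Seifert surface by splicing $m_i=q/q_i$ parallel copies of $F_i$ across the connect-sum sphere is exactly the Calegari--Gordon construction \cite[(2.3.1)]{cg}, which this paper itself invokes in the proof of Corollary~\ref{Cor2}; computing $tb_{\mathbb{Q}}$ as $\frac{1}{q}[F]\bullet\lambda_c$ matches Lemma~\ref{lemma: tb}, and computing $rot_{\mathbb{Q}}$ as a normalized relative Euler number is the Baker--Etnyre definition \cite{be}. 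Two points, however, need more than the wave of the hand you give them. First, connectivity: when $m_1,m_2>1$ the spliced surface need not be connected, while the paper's definition of a rational Seifert surface requires a connected domain; you must either choose the matching of the $q$ boundary strands to force connectedness or tube components together in the interior, observing that tubing changes neither the boundary data nor the two counts you use. Second, the local computation you correctly single out as the crux: in the standard front-projection model of Legendrian connected sum \cite{eh}, where a right cusp of $L_1$ and a left cusp of $L_2$ are cancelled, one checks that the $q$ sheets of $F$ crossing the ball meet the contact push-off of $L$ in points of total sign $+q$ while contributing nothing to the relative Euler count, because the tangent direction of $L$ and the plane field are constant there. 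Both facts do hold, so neither gap hides an obstruction; written out, your outline becomes a complete proof, independent of the citation the paper relies on.
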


\begin{proof}[Proof of Proposition~\ref{Pro0}]
It suffices to prove the case that $\mathbb{L}_1$ contains only one component $L_1$. Suppose $L_{1}$ is the Legendrian connected sum of $L'_{3}$ and $L_3$. Then we have  $$(S^3(\mathbb{L}_{2}^{-}), L_1)=(S^3(\mathbb{L}_{2}^{-}), L'_3)\sharp(S^3, L_3).$$
By \cite[Theorem 1.1]{lw},
$$tb_{\mathbb{Q}}(L'_3)+|rot_{\mathbb{Q}}(L'_3)|+1\leq 2\tau^{\ast}_{c(\xi_{st}(\mathbb{L}_{2}^{-}))}(S^3(\mathbb{L}_{2}^{-}), L'_3).$$
By assumption,
$$tb(L_3)+|rot(L_3)|+1<2\tau(L_3).$$
So by Propositions~\ref{prop: additivity} and \ref{prop: add}, we have
$$tb_{\mathbb{Q}}(L_1)+|rot_{\mathbb{Q}}(L_1)|+1< 2\tau^{\ast}_{c(\xi_{st}(\mathbb{L}_{2}^{-1}))}(S^3(\mathbb{L}_{2}^{-}), L_1).$$
The proposition now follows from Theorem~\ref{Theorem:Main1}.
\end{proof}

\begin{figure}[htb]
\begin{overpic}
{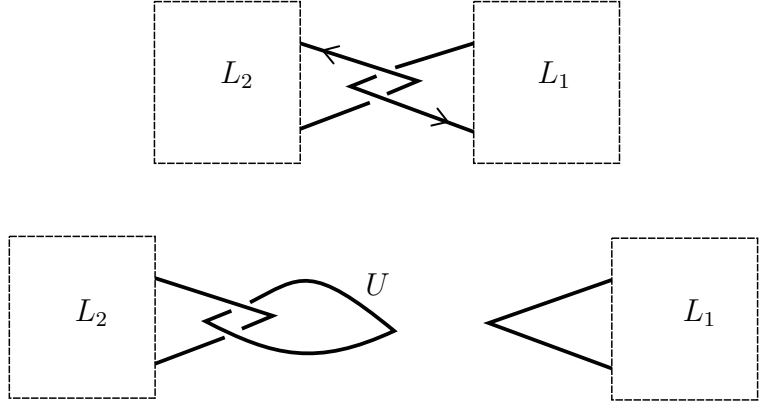}
\put(80, 120){$L_2$}
\put(200, 120){$L_{1}$}

\put(25, 30){$L_2$}
\put(135, 40){$U$}
\put(255, 30){$L_{1}$}
\end{overpic}
\caption{A Legendrian link $L_{1}\cup L_{2}$.}
\label{figure:lk1}
\end{figure}

\begin{corollary}\label{Cor1}
Let $L_{1}\cup L_{2}\subset (S^{3},\xi_{st})$ be an oriented Legendrian link with two components which has a front projection depicted at the top of Figure~\ref{figure:lk1}.  If $tb(L_2)\neq 1$ and  $tb(L_1)+|rot(L_1)|<2\tau(L_1)-1$, then the contact invariant $c(\xi_{st}(L_{1}^{+}\cup L_{2}^{-}))$ vanishes.
\end{corollary}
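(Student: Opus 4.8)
The plan is to read the corollary as a special case of Proposition~\ref{Pro0}, carried out through Theorem~\ref{Theorem:Main1}, by reordering the surgeries so that the contact $(-1)$-surgery on $L_2$ is performed first. This presents $(S^3(L_1^+\cup L_2^-),\xi_{st}(L_1^+\cup L_2^-))$ as the result of contact $(+1)$-surgery on the single Legendrian knot $L_1$ sitting inside $(S^3(L_2^-),\xi_{st}(L_2^-))$. Two preliminary facts make this legitimate. Since contact $(-1)$-surgery along $L_2$ is topological surgery with coefficient $tb(L_2)-1$ relative to the Seifert framing, the hypothesis $tb(L_2)\neq 1$ forces this coefficient to be nonzero, so $S^3(L_2^-)$ is a rational homology $3$-sphere, as required to put us in the setting of Theorem~\ref{Theorem:Main1}. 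Moreover, contact $(-1)$-surgery is Legendrian (Stein) surgery, so $(S^3(L_2^-),\xi_{st}(L_2^-))$ is Stein fillable and hence $c(\xi_{st}(L_2^-))\neq 0$ by Ghiggini's theorem \cite{g1}.

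Writing $(Y,\xi):=(S^3(L_2^-),\xi_{st}(L_2^-))$, it then suffices by Theorem~\ref{Theorem:Main1} to verify
$$tb_{\mathbb{Q}}(L_1)+|rot_{\mathbb{Q}}(L_1)|<2\tau^{\ast}_{c(\xi)}(Y,L_1)-1,$$
with every invariant now computed in $Y$. This is exactly where Figure~\ref{figure:lk1} enters: the passage from the top front projection to the bottom one, where the auxiliary unknot $U$ makes the separation visible, is meant to show that $L_1$ may be isotoped into a ball disjoint from $L_2$, so that $L_1$ is an isolated connected summand of $\mathbb{L}_1=L_1$ that does not tangle with $L_2$ in the sense of Proposition~\ref{Pro0}. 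Granting this, $L_1$ stays null-homologous in $Y$, its contact framing is unaffected, and $tb_{\mathbb{Q}}(L_1)=tb(L_1)$, $rot_{\mathbb{Q}}(L_1)=rot(L_1)$.

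Finally I would pin down the $\tau^{\ast}$-invariant. Because $L_1$ lives in a ball, there is a connected-sum splitting $(Y,L_1)\cong (Y,U_0)\,\sharp\,(S^3,L_1)$ with $U_0$ an unknot, under which $c(\xi)$ corresponds to $c(\xi)\otimes c(\xi_{st})$. Proposition~\ref{prop: additivity} then gives $\tau^{\ast}_{c(\xi)}(Y,L_1)=\tau^{\ast}_{c(\xi)}(Y,U_0)+\tau^{\ast}_{c(\xi_{st})}(S^3,L_1)$, and since $\tau^{\ast}$ of the unknot vanishes while $\tau^{\ast}_{c(\xi_{st})}(S^3,L_1)=\tau(L_1)$ is the ordinary $\tau$-invariant, I obtain $\tau^{\ast}_{c(\xi)}(Y,L_1)=\tau(L_1)$. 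The displayed inequality therefore becomes precisely the hypothesis $tb(L_1)+|rot(L_1)|<2\tau(L_1)-1$, and Theorem~\ref{Theorem:Main1} yields $c(\xi_{st}(L_1^+\cup L_2^-))=0$. The hard part will be the geometric step in the second paragraph: justifying from Figure~\ref{figure:lk1} that the apparent linking of $L_1$ and $L_2$ is inessential, so that $L_1$ can genuinely be isolated from $L_2$; once that isotopy is established, the surgery-coefficient bookkeeping (Lemmata~\ref{lemma: rot} and~\ref{lemma: tb}) and the additivity of $\tau^{\ast}$ are purely formal.
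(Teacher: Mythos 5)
Your overall skeleton (perform the contact $(-1)$-surgery on $L_2$ first, check that $S^3(L_2^-)$ is a rational homology 3-sphere with non-vanishing contact invariant, then feed a connected-sum decomposition into additivity of $\tau^{\ast}$ and Theorem~\ref{Theorem:Main1}) is the same as the paper's, and your first paragraph matches the paper's proof in content. But the geometric step you defer to the end is not merely ``hard'' --- it is false, and this is a genuine gap. In Figure~\ref{figure:lk1} the two components form a clasp with $lk(L_1,L_2)=\pm 1$; this is visible in the proof of Corollary~\ref{Cor2}, where Lemma~\ref{tb and rot} yields $tb_{\mathbb{Q}}=tb(L_1)+\tfrac{1}{1-tb(L_2)}$, i.e.\ $l^2=1$. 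So the link is not split, and $L_1$ cannot be isotoped into a ball disjoint from $L_2$. Consequently every claim you derive from that premise fails in general: the image of $L_1$ in $Y=S^3(L_2^-)$ has order $|1-tb(L_2)|$ in $H_1(Y;\mathbb{Z})$, hence is not null-homologous unless $tb(L_2)\in\{0,2\}$; its rational classical invariants acquire correction terms, $tb_{\mathbb{Q}}=tb(L_1)+\tfrac{1}{1-tb(L_2)}$ and $rot_{\mathbb{Q}}=rot(L_1)+\tfrac{rot(L_2)}{1-tb(L_2)}$; and there is no identification $\tau^{\ast}_{c(\xi)}(Y,L_1)=\tau(L_1)$.

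The paper's decomposition is the opposite of yours: it is not $L_1$ that sits in a ball, but only the knotted part of $L_1$. One writes $(S^{3}(L_2^{-}), L_1)=(S^{3}(L_2^{-}), U)\,\sharp\, (S^{3}, L_1)$, where $U$ is the clasp unknot which still links $L_2$ (bottom of Figure~\ref{figure:lk1}), and the second summand is a copy of the knot type of $L_1$ placed in a ball away from $L_2$. Since $\tau^{\ast}_{c(\xi)}(Y,U)$ is not zero in general, the paper never computes it; instead it invokes the Bennequin-type bound $tb_{\mathbb{Q}}(U)+|rot_{\mathbb{Q}}(U)|+1\leq 2\tau^{\ast}_{c(\xi)}(Y,U)$ of \cite[Theorem 1.1]{lw} for that summand, the strict hypothesis $tb(L_1)+|rot(L_1)|+1<2\tau(L_1)$ for the $S^3$-summand, and then Propositions~\ref{prop: additivity} and~\ref{prop: add} together with the triangle inequality for the rotation numbers to obtain the strict inequality $tb_{\mathbb{Q}}+|rot_{\mathbb{Q}}|+1<2\tau^{\ast}_{c(\xi)}$ for the image of $L_1$ in $Y$, which is what Theorem~\ref{Theorem:Main1} requires. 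This is exactly the content of Proposition~\ref{Pro0}, with the isolated summand $L_3$ being the copy of $L_1$ in $S^3$ rather than the component $L_1$ itself. The fix to your argument is therefore to redirect which piece is ``isolated'': keep your use of Theorem~\ref{Theorem:Main1} and of additivity, but replace the claimed equality $\tau^{\ast}_{c(\xi)}(Y,L_1)=\tau(L_1)$ by the one-sided estimate from \cite{lw} applied to the clasp unknot $U$.
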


\begin{proof} Clearly, the contact invariant $c(\xi_{st}(L_{2}^{-}))$ is non-trivial. Since $tb(L_2)-1\neq 0$, $S^{3}(L_2^{-})$ is a rational homology 3-sphere. We have $$(S^{3}(L_2^{-}), L_1)=(S^{3}(L_2^{-}), U)\sharp (S^{3}, L_1),$$ where $U$ is a Legendrian unknot shown at the bottom left of Figure~\ref{figure:lk1}.

Since $$tb(L_1)+|rot(L_1)|+1<2\tau(L_1),$$
the corollary follows from Proposition~\ref{Pro0}.
\end{proof}

\begin{figure}[htb]
\begin{overpic}
{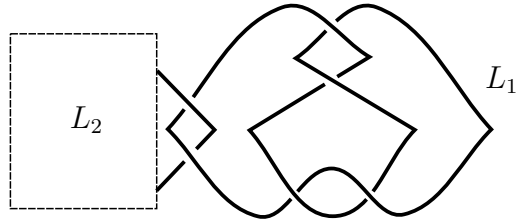}
\put(23, 35){$L_2$}
\put(180, 50){$L_{1}$}
\end{overpic}
\caption{A Legendrian link $L_{1}\cup L_{2}$, where $L_1$ is a Legendrian figure eight knot.}
\label{figure:lk3}
\end{figure}

\begin{example}\label{Example2}
Let $L=L_{1}\cup L_{2}$ be a Legendrian link in $(S^{3}, \xi_{st})$ depicted in Figure~\ref{figure:lk3}. Note that $L_{1}$ is a Legendrian figure eight knot with  $tb(L_{1})=-3$ and $rot(L_1)=0$. Since $\tau(L_1)=0$, Corollary~\ref{Cor1} implies that $(S^3(L_{1}^{+}\cup L_{2}^{-}),\xi_{st}(L_1^+\cup L_2^-))$ is a contact 3-manifold with vanishing contact invariant for any Legendrian knot $L_2$ with $tb(L_2)\neq 1$.
\end{example}

In the last part of this section, we prove Proposition~\ref{Prop:tb} and its application to contact $(+1)$-surgeries along Legendrian links in $(S^3,\xi_{st})$, Corollary~\ref{cor0}.  Note that the vanishing result in Proposition~\ref{Prop:tb} is only obtained for the plus-version of the contact invariant $c^+(\xi)$ as opposed to $c(\xi)$ in the other parts of the paper.
To the best of the authors' knowledge, there is no known example of contact 3-manifold $(Y,\xi)$ with vanishing $c^{+}(\xi)$ but nonvanishing $c(\xi)$.

\begin{proof}[Proof of Proposition~\ref{Prop:tb}]
Let $W$ be the cobordism from $Y$ to $Y_{+1}(L)$ induced by contact $(+1)$-surgery. Then the map $F_{-W}^{+}:HF^{+}(-Y)\rightarrow HF^{+}(-Y_{+1}(L))$ send $c^{+}(\xi)$ to $c^{+}(\xi_{+1}(L))$. By \cite[Lemma 5.1]{mt} and Lemma~\ref{lemma: tb}, the self-intersection of a generator of $H_{2}(-W;\mathbb{Z})\cong \mathbb{Z}$ is $-q^2(tb_{\mathbb{Q}}(L)+1)>0$. So $-W$ is positive definite. By \cite[Lemma 8.2]{OSzFour}, $F_{-W}^{\infty}: HF^{\infty}(-Y)\rightarrow HF^{\infty}(-Y_{+1}(L))$ vanishes. Since $Y$ is an L-space, $HF^{\infty}(-Y)\rightarrow HF^{+}(-Y)$ is onto. Hence $F_{-W}^{+}=0$, and the contact invariant $c^{+}(\xi_{+1}(L))$ vanishes.
\end{proof}

\begin{proof}[Proof of Corollary~\ref{cor0}] If $L_2$ is an unknot, then the corollary follows from \cite[Theorem 1.1]{dlw}. In the following we assume that $L_2$ is nontrivial.

Since $L_2$ is an L-space knot, $g(L_2)=\tau(L_2)$.  If $tb(L_2)<2g(L_2)-1=2\tau(L_2)-1$, then \cite[Theorem 1.1]{gol} implies that $c^{+}(\xi_{st}(L^{+}_{2}))$ vanishes. So the contact invariant  $c^{+}(\xi_{st}(\mathbb{L}^{+}))$ vanishes for any Legendrian knot $L_1$.

From Bennequin inequality, we can now assume that $tb(L_2)=2g(L_2)-1$. (Indeed, Lidman and Sivek conjectured in \cite[Conjecture 1.19]{ls2} that any L-space knot $K$ has maximal Thurston-Bennequin invariant $2g(K)-1$.) By \cite[Theorem 1.1]{gol},  $(S^3(L_{2}^{+}),\xi_{st}(L_2^+))$ is a tight contact L-space.  Using \cite[Lemma 3.1]{go}, we know that the rational Thurston-Bennequin invariant of $L_1$ in $(S^3(L_{2}^{+}),\xi_{st}(L_2^+))$ is  $$tb(L_1)+\frac{\det\left(
\begin{array}{cc}
0 & l \\
l & tb(L_2)+1
\end{array}
\right)}{tb(L_2)+1}=tb(L_1)-\frac{l^2}{2g(L_2)},$$ which is less than $-1$ by assumption. So the corollary follows immediately  from Proposition~\ref{Prop:tb}.
\end{proof}

\begin{example}\label{Example11}
Contact  $(+1)$-surgery along the Legendrian link  $\mathbb{L}=L_{1}\cup L_{2}$ in $(S^3,\xi_{st})$ depicted in Figure~\ref{figure: lk11} yields a contact 3-manifold with vanishing contact invariant $c^{+}(\xi_{st}(\mathbb{L}^+))$.
\end{example}

\begin{figure}[htb]
\begin{overpic}
{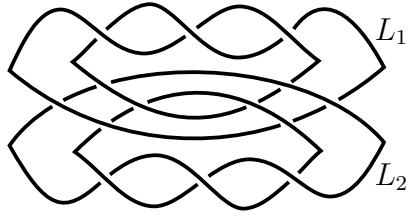}
\put(140, 65){$L_1$}
\put(140, 10){$L_2$}
\end{overpic}
\caption{A Legendrian link $L_{1}\cup L_{2}$, where both components are Legendrian right handed trefoil knots with $tb=1$, and the linking number is $4$.}
\label{figure: lk11}
\end{figure}

\section{Contact ($\pm$1)-surgeries on rational homology 3-spheres}

Let $\mathbb{L}=L_1\cup\cdots\cup L_n$ be a Legendrian link in a contact rational homology 3-sphere $(Y,\xi)$. Suppose that contact $(\pm 1)$-surgery along the components of $\mathbb{L}$ produces $(Y',\xi')$. Set $a_i=tb_{\mathbb{Q}}(L_i)\pm 1$, depending on whether we perform contact $(+1)$-surgery or $(-1)$-surgery along $L_i$. Let $L_0$ be a Legendrian knot in $(Y,\xi)$ disjoint from $\mathbb{L}$. For $i\neq j$, denote the rational linking number $lk_{\mathbb{Q}}(L_i,L_j)$ by $l_{ij}$. Let $M=(m_{ij})_{i,j=1}^n$ be the linking matrix of $\mathbb{L}$, i.e.
$$m_{ij}=\left\{ \begin{array}{ll} a_i & \mathrm{if}\ i=j, \\ l_{ij} & \mathrm{if}\ i\neq j.\end{array}\right.$$
Let $M_0=(m_{ij})_{i,j=0}^n$ be the extended matrix given by
$$m_{ij}=\left\{ \begin{array}{ll} 0 & \mathrm{if}\ i=j=0, \\ a_i & \mathrm{if}\ i=j\ge 1, \\ l_{ij} & \mathrm{if}\ i\neq j. \end{array}\right.$$
$Y'$ is still a rational homology 3-sphere if and only if $\det M\neq 0$ (see the proof of the following lemma). Let $L$ be the image of $L_0$ in $(Y',\xi')$.
The following lemma is a generalization of \cite[Lemma 3.1]{go}.

\begin{lemma} \label{tb and rot} Suppose $\det M\neq 0$. Then
$$tb_{\mathbb{Q}}(L)=tb_{\mathbb{Q}}(L_0)+\frac{\det M_0}{\det M}$$
and
$$rot_{\mathbb{Q}}(L)=rot_{\mathbb{Q}}(L_0)-\left< \left( \begin{array}{c} rot_{\mathbb{Q}}(L_1) \\ \vdots \\ rot_{\mathbb{Q}}(L_n) \end{array} \right) ,\
M^{-1} \left( \begin{array}{c} l_{01} \\ \vdots \\ l_{0n} \end{array} \right) \right> .$$
\end{lemma}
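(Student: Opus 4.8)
The plan is to work entirely in the surgered manifold $(Y',\xi')$ and to build a rational Seifert surface for $L$ out of a rational Seifert surface for $L_0$ in $Y$, corrected by meridian disks of the surgery solid tori; the matrix $M$ enters through the homological constraints that determine these corrections and is inverted by Cramer's rule. I would begin with the homological bookkeeping. Since $Y$ is a rational homology sphere, $H_1(Y\setminus N(\mathbb L);\mathbb Q)$ is freely generated over $\mathbb Q$ by the meridians $\mu_1,\dots,\mu_n$, and the rational linking numbers give $[\lambda_{can,i}]=\sum_{j\neq i} l_{ij}\,\mu_j$. Contact $(\pm1)$-surgery reglues a solid torus along the slope $\gamma_i=\lambda_{c,i}\pm\mu_i$, whose rational surgery coefficient with respect to $\lambda_{can,i}$ is exactly $a_i=tb_{\mathbb Q}(L_i)\pm1$; hence $[\gamma_i]=\sum_j m_{ij}\mu_j$, and the new relations in $H_1(Y';\mathbb Q)$ are precisely $M\boldsymbol\mu=0$. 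In particular $\det M\neq0$ forces every $\mu_j$ to be torsion, which is why $Y'$ is a rational homology sphere exactly in that case.

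For the Thurston--Bennequin formula I would recall that $tb_{\mathbb Q}(L)=lk_{Y'}(L,L^{+})$, where $L^{+}$ is the contact push-off; since surgery is performed away from $L_0$, the curve $L^{+}$ is unchanged and only the ambient linking form changes. Take a rational Seifert surface $F_0$ for $L_0$ in $Y$ with $\partial F_0=q_0 L_0$, where $q_0$ is the order of $[L_0]$; viewed in $Y'$ it meets $\partial N(L_i)$ in a curve homologous to $q_0 l_{0i}\mu_i$ (plus a $\lambda_{can,i}$-part that I would absorb into the framing). To close it up into a rational Seifert surface for $L$ I would add $x_i$ copies of the meridian disk of the $i$-th glued torus together with core-parallel annuli, where the $x_i$ are forced by requiring that the total boundary along each $\partial N(L_i)$ vanish; this is exactly the linear system $M\mathbf x=-q_0\,\mathbf l_0$ with $\mathbf l_0=(l_{01},\dots,l_{0n})^{T}$, so $\mathbf x=-q_0 M^{-1}\mathbf l_0$. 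Intersecting the resulting $2$-chain with $L^{+}$ and dividing by the order of $[L]$ then yields
$$tb_{\mathbb Q}(L)=tb_{\mathbb Q}(L_0)-\mathbf l_0^{T}M^{-1}\mathbf l_0,$$
which I would identify with $\det M_0/\det M$ via the bordered-determinant identity $\det\!\begin{pmatrix}0&\mathbf l_0^{T}\\ \mathbf l_0&M\end{pmatrix}=-(\det M)\,\mathbf l_0^{T}M^{-1}\mathbf l_0$.

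For the rotation number I would use that $rot_{\mathbb Q}(L)$ is computed by evaluating the relative Euler class of $\xi'$, trivialized along the boundary by the contact framing, on the very same capped-off surface. On the part coming from $F_0$ this returns $q_0\,rot_{\mathbb Q}(L_0)$, unchanged because $\xi'\equiv\xi$ near $L_0$ and $F_0$; on the $i$-th surgery disk the evaluation of $c_1(\xi')$ is $rot_{\mathbb Q}(L_i)$, which is the surgered-manifold analogue of Lemma~\ref{lemma: rot}, computing $\langle c_1(\mathfrak t_1),[F_{qC}]\rangle=q\cdot rot_{\mathbb Q}(L_i)$ on the core disk of the handle. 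Summing with the weights $\mathbf x=-q_0 M^{-1}\mathbf l_0$ and dividing by $q_0$ gives
$$rot_{\mathbb Q}(L)=rot_{\mathbb Q}(L_0)-\big\langle \mathbf r,\,M^{-1}\mathbf l_0\big\rangle,\qquad \mathbf r=(rot_{\mathbb Q}(L_1),\dots,rot_{\mathbb Q}(L_n))^{T},$$
as claimed. The main obstacle, and where I would spend the care, is the rational chain-level bookkeeping: keeping orientations and the orders of $[L_0]$ and $[L]$ consistent, correctly splitting each $\gamma_i$ into its $\mu_i$- and $\lambda_{can,i}$-components so that the coefficient matrix is genuinely $M$, and justifying that the surgery-disk contribution to the relative Euler class is exactly $rot_{\mathbb Q}(L_i)$ rather than off by the framing term $a_i$. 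The determinant manipulation and the inversion of $M$ are then routine.
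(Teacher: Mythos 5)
Your strategy is sound and, at bottom, it is the same computation as the paper's written in dual language: where you build a capped rational $2$-chain and solve $M\mathbf{x}=-q_0\mathbf{l}_0$ by Cramer/Schur, the paper expresses the class $\lambda_0+a_0\mu_0$ as a combination of the surgery relations in $H_1(X'(L);\mathbb{Q})$ and extracts $a_0$ from the bordered-determinant identity $(a_0-\frac{r_0}{t_0})\det M+\det M_0=0$, which is exactly your Schur-complement identity; for the rotation number the paper pushes forward $\mathrm{PD}(e(\xi,\bigcup L_i))_{\mathbb{Q}}=\sum_i rot_{\mathbb{Q}}(L_i)\mu_i$ under inclusion-induced maps rather than evaluating on a chain. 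So the route is essentially parallel, but two of your intermediate claims need repair, and they sit precisely at the point you flagged as delicate.

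First, the asserted identity $[\lambda_{can,i}]=\sum_{j\neq i}l_{ij}\mu_j$ is false in general. Since the rational Seifert surface $F_i$ meets $\partial N(L_i)$ in curves of total class $q_i\lambda_{can,i}+c_ir_i\mu_i$, the correct statement is $[\lambda_{can,i}]=-\frac{r_i}{t_i}\mu_i+\sum_{j\neq i}l_{ij}\mu_j$, and correspondingly $tb_{\mathbb{Q}}(L_i)=m_i-\frac{r_i}{t_i}$ where $\lambda_{c,i}=\lambda_{can,i}+m_i\mu_i$; the coefficient of the surgery slope with respect to $\lambda_{can,i}$ on the torus is the integer $m_i\pm 1$, not $a_i$. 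Your working relation $[\gamma_i]=\sum_j m_{ij}\mu_j$ is nevertheless correct, because these two inaccuracies cancel; the clean justification is that the $\mu_j$-coordinates of any class in $H_1(X(L_0\cup\mathbb{L});\mathbb{Q})$ are its rational linking numbers with the $L_j$, and $lk_{\mathbb{Q}}(\gamma_i,L_i)=tb_{\mathbb{Q}}(L_i)\pm 1=a_i$ while $lk_{\mathbb{Q}}(\gamma_i,L_j)=l_{ij}$ for $j\neq i$. This $-\frac{r_i}{t_i}$ bookkeeping is exactly what distinguishes the rational case from \cite[Lemma 3.1]{go} and is the main content of the paper's proof, so it cannot be waved through. (A smaller slip of the same kind: when converting the intersection count to $tb_{\mathbb{Q}}(L)$ you must divide by $q_0$, the multiplicity with which your chain's boundary covers $L$, not by the order of $[L]$ in $H_1(Y';\mathbb{Z})$, which is in general different; compare Lemma~\ref{lemma:order3}.) Second, your claim that each glued meridian disk contributes $rot_{\mathbb{Q}}(L_i)$ to the relative Euler class evaluation is not an analogue of Lemma~\ref{lemma: rot}, which is a $4$-dimensional statement about $c_1$ of a Spin$^c$ structure on the handle cobordism; it is precisely the $3$-dimensional naturality statement that the paper imports from \cite[Lemma 3.2]{go}, namely that under $H_1(X(L_0\cup\mathbb{L});\mathbb{Q})\to H_1(X'(L);\mathbb{Q})$ the class $\mathrm{PD}(e(\xi,\bigcup_{i=0}^n L_i))_{\mathbb{Q}}$ maps to $\mathrm{PD}(e(\xi',L))_{\mathbb{Q}}$. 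You should cite that lemma (or prove the statement) rather than argue by analogy. With these two repairs your argument goes through and yields both formulas of the lemma.
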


\begin{proof}[Proof]
For $i=0,1,\ldots ,n$, denote by $N(L_i)$ a closed tubular neighborhood of $L_i$ in $Y$. Suppose for $i\neq j$, $N(L_i)$ and $N(L_j)$ are disjoint. Denote the knot exterior
$\overline{Y\setminus N(L_i)}$ by $X(L_i)$. Denote $\overline{Y\setminus\bigcup\limits_{i=1}^n N(L_i)}$ by $X(\mathbb{L})$ and $\overline{Y\setminus\bigcup\limits_{i=0}^n N(L_i)}$
by $X(L_0\cup\mathbb{L})$. Suppose the order of $[L_i]$ in $H_1(Y;\mathbb{Z})$ is $q_i$. Let $F_i$ be a rational Seifert surface for $L_i$. We can assume
that $F_i\cap\partial N(L_i)$ is composed of $c_i$ parallel oriented simple closed curves, each of which has homology $t_i\lambda_i+r_i\mu_i\in H_1(\partial N(L_i);\mathbb{Z})$, where $\lambda_i$ is the class of a canonical longitude and $\mu_i$ is the class of a meridian, $t_i,r_i$ are coprime and $0\le r_i<t_i$. Certainly we have $c_it_i=q_i$.

Note that
$$H_1(X(L_0\cup\mathbb{L});\mathbb{Q})\cong \mathbb{Q}\langle\mu_0\rangle\oplus\cdots\oplus\mathbb{Q}\langle\mu_n\rangle,$$
where $\mathbb{Q}\langle\mu_i\rangle$ denotes the vector space over $\mathbb{Q}$ generated by the class $\mu_i$. In $H_1(X(L_0\cup\mathbb{L});\mathbb{Q})$, we have
$$c_i(t_i\lambda_i+r_i\mu_i)=\sum_{j=0 \atop j\neq i}^{n} q_il_{ij}\mu_j.$$
Since $c_it_i=q_i$, this is equivalent to
$$\lambda_i=-\frac{r_i}{t_i}\mu_i+\sum_{j=0 \atop j\neq i}^{n} l_{ij}\mu_j.$$
Suppose the class of a longitude of $L_i$ determined by the contact framing is $\lambda_i+m_i\mu_i$. Then
$$tb_{\mathbb{Q}}(L_i)=\frac{1}{q_i}[F_i]\bullet (\lambda_i+m_i\mu_i)=\frac{1}{q_i}(c_it_im_i-c_ir_i)=m_i-\frac{r_i}{t_i}.$$
Contact $(\pm 1)$-surgery along $L_i$ implies that we glue a meridional disc along $$\lambda_i+(m_i\pm 1)\mu_i=a_i\mu_i+\sum_{j=0 \atop j\neq i}^{n} l_{ij}\mu_j,$$
$i=1,\ldots,n$. It follows that
$$H_1(X'(L);\mathbb{Q})\cong \mathbb{Q}\langle\mu_0\rangle\oplus\cdots\oplus\mathbb{Q}\langle\mu_n\rangle/\langle a_i\mu_i+\sum_{j=0 \atop j\neq i}^{n} l_{ij}\mu_j,\ i=1,\ldots,n\rangle,$$
where $X'(L)$ denotes the knot exterior $\overline{Y'\setminus N(L_0)}$.
Similarly,
$$H_1(Y';\mathbb{Q})\cong \mathbb{Q}\langle\mu_1\rangle\oplus\cdots\oplus\mathbb{Q}\langle\mu_n\rangle/\langle a_i\mu_i+\sum_{j=1 \atop j\neq i}^{n} l_{ij}\mu_j,\ i=1,\ldots,n\rangle.$$
Hence $Y'$ is a rational homology 3-sphere if and only if $\det M\neq 0$.

Since $\det M\neq 0$, $Y'$ is a rational homology 3-sphere and $H_1(X'(L);\mathbb{Q})\cong\mathbb{Q}\langle \mu_0\rangle$. Thus there is a unique $a_0\in\mathbb{Q}$ such that $\lambda_0+a_0\mu_0=0$ in $H_1(X'(L);\mathbb{Q})$. The rational
Thurston-Bennequin invariant of $L$ in $(Y',\xi')$ can be computed as
$$tb_{\mathbb{Q}}(L)=(\lambda_0+a_0\mu_0)\bullet (\lambda_0+m_0\mu_0)=m_0-a_0,$$
where the intersection product is on $\partial X'(L)$. Since
$$\lambda_0=-\frac{r_0}{t_0}\mu_0+\sum_{j=1}^{n} l_{0j}\mu_j\in H_1(X(L_0\cup\mathbb{L});\mathbb{Q}),$$
and $$\lambda_0+a_0\mu_0=0\in H_1(X'(L);\mathbb{Q}),$$
$(a_0-\frac{r_0}{t_0})\mu_0+\sum\limits_{j=1}^{n} l_{0j}\mu_j$ is a linear combination of the relations in
$H_1(X'(L);\mathbb{Q})$, which gives
$$0=\left| \begin{array}{cccc} a_0-\frac{r_0}{t_0} & l_{01} & \cdots & l_{0n} \\ l_{10} & a_1 & \cdots & l_{1n} \\ \vdots & \vdots
& \ddots & \vdots \\ l_{n0} & l_{n1} & \cdots & a_n \end{array} \right| =(a_0-\frac{r_0}{t_0})\det M+\det M_0.$$
Hence $tb_{\mathbb{Q}}(L)=m_0-(\frac{r_0}{t_0}-\frac{\det M_0}{\det M})=tb_{\mathbb{Q}}(L_0)+\frac{\det M_0}{\det M}$.

The Poincar\'{e} dual of the relative class $e(\xi, L_i)$ over $\mathbb{Q}$, $\mathrm{PD}(e(\xi,L_i))_{\mathbb{Q}}\in H_1(X(L_i);\mathbb{Q})\cong\mathbb{Q}\langle\mu_i\rangle$, is
equal to $rot_{\mathbb{Q}}(L_i)\mu_i$. Since under the map $H_1(X(L_0\cup\mathbb{L});\mathbb{Q})\to H_1(X(L_i);\mathbb{Q})$ induced by inclusion, $\mathrm{PD}(e(\xi,
\bigcup\limits_{i=0}^n L_i))_\mathbb{Q}$ maps to $\mathrm{PD}(e(\xi, L_i))_{\mathbb{Q}}$, we have
$$\mathrm{PD}(e(\xi,\bigcup^n_{i=0} L_i))_\mathbb{Q}=\sum_{i=0}^n rot_{\mathbb{Q}}(L_i)\mu_i.$$
Under the map $H_1(X(L_0\cup\mathbb{L});\mathbb{Q})\to H_1(X'(L);\mathbb{Q})$ induced by inclusion, $\mathrm{PD}(e(\xi,
\bigcup\limits_{i=0}^n L_i))_\mathbb{Q}$ maps to $\mathrm{PD}(e(\xi', L))_{\mathbb{Q}}$ (see \cite[Lemma 3.2]{go}). Therefore, in $H_1(X'(L);\mathbb{Q})$, we have
$$rot_{\mathbb{Q}}(L)\mu_0=\sum_{i=0}^n rot_{\mathbb{Q}}(L_i)\mu_i$$
$$=\left( rot_{\mathbb{Q}}(L_0)-\left< \left( \begin{array}{c} rot_{\mathbb{Q}}(L_1) \\ \vdots \\ rot_{\mathbb{Q}}(L_n) \end{array} \right) ,\
M^{-1} \left( \begin{array}{c} l_{01} \\ \vdots \\ l_{0n} \end{array} \right) \right> \right)\mu_0.$$
This proves the second formula in the lemma.
\end{proof}

\section{Overtwisted contact surgeries on rational homology 3-spheres}

We use the following result as the main tool of the proof of Theorem~\ref{Theorem:Main3}.

\begin{theorem}[\'{S}wi\c{a}towski\cite{d}, Etnyre\cite{e1}, Baker-Onaran\cite{bo}] \label{loose}
If $L\subset (Y,\xi)$ is a rationally null-homologous Legendrian knot such that  the complement of a regular neighborhood of $L$ is tight, then $$-|tb_{\mathbb{Q}}(L)|+|rot_{\mathbb{Q}}(L)|\leq -\frac{\chi(F)}{q},$$ where $q$ is the order of $[L]$ in $H_{1}(Y;\mathbb{Z})$, and $\chi(F)$ is the Euler characteristic of a rational Seifert surface $F$ for $L$.
\end{theorem}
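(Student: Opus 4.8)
The plan is to prove the inequality by convex surface theory inside the tight complement $M=\overline{Y\setminus N(L)}$, converting $tb_{\mathbb{Q}}(L)$ and $rot_{\mathbb{Q}}(L)$ into combinatorial data of the dividing set on a convex rational Seifert surface. First I would make the boundary torus $T=\partial N(L)$ convex; for a standard neighborhood of the Legendrian knot $L$ its dividing set $\Gamma_T$ consists of two parallel curves of slope equal to the contact framing $\lambda_{c}=\lambda_{can}+(p-1)\mu$. Using the Legendrian realization principle I would then isotope the rational Seifert surface $F$, keeping its boundary slope $\nu=t\lambda_{can}+r\mu$ fixed, so that $F$ becomes convex and $\partial F$ consists of Legendrian ruling curves on $T$. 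Writing $F=F_{+}\cup_{\Gamma_F}F_{-}$ for the decomposition of $F$ into the positive and negative regions cut out by the dividing set $\Gamma_F$, the whole argument reduces to controlling $\chi(F_{\pm})$ and the number of boundary intersections $\#(\Gamma_F\cap\partial F)$.

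The two key identifications I would establish are the following. Comparing the dividing slope $\lambda_{c}$ with the boundary slope $\nu$ on $T$ and using the computation in Lemma~\ref{lemma: tb} (which gives $q\cdot tb_{\mathbb{Q}}(L)=q(p-1)-cr$), the geometric intersection count becomes
\[
\tfrac12\,\#(\Gamma_F\cap\partial F)=|[F]\bullet\lambda_{c}|=q\,|tb_{\mathbb{Q}}(L)|.
\]
For the rotation number I would use that, on a convex surface, the relative Euler number of $\xi|_F$ with respect to the boundary trivialization coming from the framing equals $\chi(F_{+})-\chi(F_{-})$; since by definition (cf.\ \cite{be}) the rational rotation number satisfies $\langle e(\xi,L),[F]\rangle=q\cdot rot_{\mathbb{Q}}(L)$, this yields $\chi(F_{+})-\chi(F_{-})=q\cdot rot_{\mathbb{Q}}(L)$.

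The tightness of $M$ now enters through Giroux's criterion: since $M$ is tight and $F$ is not a sphere, $\Gamma_F$ contains no homotopically trivial closed curve. A short Euler characteristic count then shows that every disk component of $F_{\pm}$ must meet $\partial F$ along a boundary-parallel arc of $\Gamma_F$, and since distinct such disk regions use disjoint dividing arcs we obtain $\chi(F_{\pm})\le \tfrac12\#(\Gamma_F\cap\partial F)=q\,|tb_{\mathbb{Q}}(L)|$. Cutting $F$ along $\Gamma_F$ gives the bookkeeping identity $\chi(F_{+})+\chi(F_{-})=\chi(F)+\tfrac12\#(\Gamma_F\cap\partial F)$. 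Taking the larger of $\chi(F_{+}),\chi(F_{-})$ and combining these three facts,
\[
q\,|rot_{\mathbb{Q}}(L)|=|\chi(F_{+})-\chi(F_{-})|\le \tfrac12\#(\Gamma_F\cap\partial F)-\chi(F)=q\,|tb_{\mathbb{Q}}(L)|-\chi(F),
\]
which rearranges to $-|tb_{\mathbb{Q}}(L)|+|rot_{\mathbb{Q}}(L)|\le -\chi(F)/q$, as desired.

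The main obstacle is the rational bookkeeping. One must ensure that $N(L)$ is a standard neighborhood so that $\Gamma_T$ has exactly two components, for otherwise the count of $\#(\Gamma_F\cap\partial F)$ acquires extra terms; one must track the $q$-fold covering $\partial F\to L$ and the distinction between $\lambda_{can}$ and $\lambda_{c}$ throughout the intersection-number and relative-Euler-class computations; and one must verify that the quantity $\chi(F_{+})-\chi(F_{-})$ is computed with respect to precisely the boundary framing used to define $rot_{\mathbb{Q}}(L)$. It is also here that the hypothesis of a tight \emph{complement}, rather than a tight ambient manifold, is essential: tightness is only needed, and only available, on $M$, and the estimate $\chi(F_{\pm})\le \tfrac12\#(\Gamma_F\cap\partial F)$ is symmetric in the two regions. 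This symmetry is exactly what produces the sharp two-sided statement involving $-|tb_{\mathbb{Q}}(L)|$ and $|rot_{\mathbb{Q}}(L)|$ rather than the signed invariants.
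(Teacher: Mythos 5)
The paper does not prove Theorem~\ref{loose} at all: it is imported as a black box from \cite{d}, \cite{e1} and \cite{bo}, so your argument can only be measured against those references. What you propose is, in outline, exactly the convex-surface-theory proof given there (Etnyre \cite{e1} in the null-homologous case, Baker--Onaran \cite{bo} in the rational case): two dividing curves of contact-framing slope on $\partial N(L)$, the arc count $\tfrac12\#(\Gamma\cap\partial)=q\,|tb_{\mathbb{Q}}(L)|$ via the intersection computation of Lemma~\ref{lemma: tb}, Giroux's criterion in the tight complement, the disk estimate $\chi(F_{\pm})\le\#\{\text{arcs}\}$, and the cut-and-paste identity. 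Your final arithmetic is correct and does yield $-|tb_{\mathbb{Q}}(L)|+|rot_{\mathbb{Q}}(L)|\le-\chi(F)/q$, and you correctly identify why the absolute value $|tb_{\mathbb{Q}}(L)|$ (rather than $tb_{\mathbb{Q}}(L)$) appears: the geometric intersection count with $\Gamma_T$ is insensitive to the sign of $tb_{\mathbb{Q}}(L)$, and only tightness of the complement is used.

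Two of the issues you relegate to ``bookkeeping'' are, however, the real mathematical content, and your sketch does not resolve them. First, $F$ itself can never be made convex: $j\colon F\to Y$ is an embedding only on its interior, the boundary being a $q$-fold cover of $L$, so the convex surface must be the truncated embedded surface $F'=j(F)\cap\overline{Y\setminus N(L)}$, which has $\chi(F')=\chi(F)$ and whose boundary consists of $c$ curves of class $t\lambda_{can}+r\mu$ on $T$; moreover, making $F'$ convex requires the non-positive-twisting criterion of Kanda--Honda along each boundary component, not merely Legendrian realization --- this does hold here, since the twisting along each ruling curve equals $-\tfrac12\#(\Gamma_T\cap\cdot)\le0$, but it must be invoked. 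Second, the identity $\chi(F_+)-\chi(F_-)=q\cdot rot_{\mathbb{Q}}(L)$ is not ``by definition'': Kanda's formula computes the relative Euler class with respect to the tangent trivialization along the ruling curves $\partial F'$, whereas $rot_{\mathbb{Q}}(L)$ is defined via the trivialization along $L$ itself, and one must show that the discrepancy over the annuli $j(F)\cap N(L)$ vanishes --- equivalently, that the rotation numbers of the Legendrian ruling curves sum to $q\cdot rot_{\mathbb{Q}}(L)$. That lemma is precisely what \cite{bo} supplies; with those two points filled in as in \cite{bo}, your outline closes into a complete proof.
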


\begin{proof}[Proof of Theorem~\ref{Theorem:Main3}]

Consider contact $(+1)$-surgery along $L$. Let $L^*$ be the surgery dual. Let $L_0$ be a Legendrian push-off of $L$. Then $L_0^*$, the image of $L_0$ in
$(Y_{+1}(L),\xi_{+1}(L))$, is Legendrian isotopic to $L^*$. We use Lemma~\ref{tb and rot} to compute $tb_{\mathbb{Q}}(L_0^*)$ and $rot_{\mathbb{Q}}(L_0^*)$.
Now $M=(tb_{\mathbb{Q}}(L)+1)$ and $M_0=\left( \begin{array}{cc} 0 & tb_{\mathbb{Q}}(L) \\ tb_{\mathbb{Q}}(L) & tb_{\mathbb{Q}}(L)+1 \end{array} \right)$.
Hence
$$tb_{\mathbb{Q}}(L_0^*) =tb_{\mathbb{Q}}(L_0)+\frac{\det M_0}{\det M} $$
$$=tb_{\mathbb{Q}}(L)-\frac{(tb_{\mathbb{Q}}(L))^2}{tb_{\mathbb{Q}}(L)+1} =\frac{tb_{\mathbb{Q}}(L)}{tb_{\mathbb{Q}}(L)+1}$$
and
$$rot_{\mathbb{Q}}(L_0^*)=rot_{\mathbb{Q}}(L_0)-\frac{rot_{\mathbb{Q}}(L)\cdot tb_{\mathbb{Q}}(L)}{tb_{\mathbb{Q}}(L)+1}=\frac{rot_{\mathbb{Q}}(L)}{tb_{\mathbb{Q}}(L)+1}.$$

\begin{lemma}\label{lemma:order3}
The order of $[L_{0}^{\ast}]$ in $H_{1}(Y_{+1}(L);\mathbb{Z})$ is $q\cdot|tb_{\mathbb{Q}}(L)+1|$.
\end{lemma}

\begin{proof}
We use the notation in the first paragraph of Section 2 with $K$ replaced by $L$. Let $X(L)$ denote the knot exterior $\overline{Y\setminus N(L)}$. Let $i_*:H_1(\partial N(L);\mathbb{Z})\to H_1(X(L);\mathbb{Z})$ be the map induced by inclusion. Then $\ker i_*$ is generated by $c(t\lambda_{can}+r\mu)=q\lambda_{can}+cr\mu$.
Write $\mu'$ and $\lambda'$ for the classes of a meridian and a longitude, respectively, of the solid torus we glue in to perform the surgery.
Suppose the class of a longitude of $L$ determined by the contact framing is $\lambda_c=\lambda_{can}+(p-1)\mu$ for some integer $p$. Contact $(+1)$-surgery can be described
by the map
$$\mu'\mapsto \mu+\lambda_c=p\mu+\lambda_{can},\ \lambda'\mapsto \lambda_c=(p-1)\mu+\lambda_{can}.$$
Then in terms of $\mu'$ and $\lambda'$, $\ker i_*$ is generated by $(pq-cr)\lambda'+(cr-pq+q)\mu'$. Thus the order of $[L^*]$ in $H_1(Y_{+1}(L);\mathbb{Z})$ is $|pq-cr|$.
Since $L_0^*$ is isotopic to $L^*$, by Lemma~\ref{lemma: tb}, $[L_{0}^{\ast}]$ is of order $q\cdot |tb_{\mathbb{Q}}(L)+1|$ in $H_{1}(Y_{+1}(L);\mathbb{Z})$.
\end{proof}

(1) Let $L_{k}^{\ast}$ be $k$ positive or negative stabilization of $L_{0}^{\ast}$. If $rot_{\mathbb{Q}}(L_{0}^{\ast})$ is nonnegative, then we choose $k$ positive stabilization. Otherwise, we choose  $k$ negative stabilization. Assume that $k$ is sufficiently large. Since $tb_{\mathbb{Q}}(L)<-1$, $tb_{\mathbb{Q}}(L_{0}^{\ast})$ is positive. So we have $$-|tb_{\mathbb{Q}}(L_{k}^{\ast})|+|rot_{\mathbb{Q}}(L_{k}^{\ast})|=-|tb_{\mathbb{Q}}(L_{0}^{\ast})-k|+|rot_{\mathbb{Q}}(L_{0}^{\ast})|+k$$
$$=-(k-tb_{\mathbb{Q}}(L_{0}^{\ast}))+|rot_{\mathbb{Q}}(L_{0}^{\ast})|+k
=|tb_{\mathbb{Q}}(L_{0}^{\ast})|+|rot_{\mathbb{Q}}(L_{0}^{\ast})|$$
$$=\frac{|tb_{\mathbb{Q}}(L)|+|rot_{\mathbb{Q}}(L)|}{|tb_{\mathbb{Q}}(L)+1|}.$$

The order of $[L_{k}^{\ast}]$ in $H_{1}(Y_{+1}(L);\mathbb{Z})$ is the same as that of $[L_0^*]$, that is, $q|tb_{\mathbb{Q}}(L)+1|$.
Denote $F\cap X(L)$ by $F^0$. We can radially cone $\partial F^0\subset\partial X(L)$ in the solid torus we glue in to get a rational Seifert surface $F^*$ for $L^*$ in $Y_{+1}(L)$. Since $L_{k}^{\ast}$ is smoothly isotopic to $L^*$, there is a rational Seifert surface $F_k^*$ for $L_k^*$ in $Y_{+1}(L)$ with $\chi (F_k^*)=\chi (F^*)=\chi(F)$. Since $tb_{\mathbb{Q}}(L)-|rot_{\mathbb{Q}}(L)|<\frac{\chi(F)}{q}$, $|tb_{\mathbb{Q}}(L)|+|rot_{\mathbb{Q}}(L)|>\frac{-\chi(F)}{q}$ and $-|tb_{\mathbb{Q}}(L_{k}^{\ast})|+|rot_{\mathbb{Q}}(L_{k}^{\ast})|>\frac{-\chi(F_k^*)}{q|tb_{\mathbb{Q}}(L)+1|}$. By Theorem~\ref{loose}, the complement of $L_{k}^{\ast}$ in $(Y_{+1}(L), \xi_{+1}(L))$ is overtwisted.

(2) Since $\chi(F)\le 1$, $tb_{\mathbb{Q}}(L)+|rot_{\mathbb{Q}}(L)|<\frac{\chi(F)}{q}-2$ implies that $tb_{\mathbb{Q}}(L)< -1$.
Consider $L_+^*$ and $L_-^*$, the positive and negative stabilizations of $L_0^*$. We have
$$tb_{\mathbb{Q}}(L_+^*)=tb_{\mathbb{Q}}(L_0^*)-1=-\frac{1}{tb_{\mathbb{Q}}(L)+1}$$ and
$$rot_{\mathbb{Q}}(L_+^*)=rot_{\mathbb{Q}}(L_0^*)+1=\frac{rot_{\mathbb{Q}}(L)+tb_{\mathbb{Q}}(L)+1}{tb_{\mathbb{Q}}(L)+1}.$$
It follows that
$$-|tb_{\mathbb{Q}}(L_{+}^{\ast})|+|rot_{\mathbb{Q}}(L_{+}^{\ast})|=\frac{|rot_{\mathbb{Q}}(L)+tb_{\mathbb{Q}}(L)+1|-1}{|tb_{\mathbb{Q}}(L)+1|}>-\frac{\chi (F)}{q|tb_{\mathbb{Q}}(L)+1|}.$$
So $L_+^*$ is loose by Theorem~\ref{loose}. Similarly, $L_-^*$ is also loose.

Let $X_+^*(L)$ (respectively, $X_-^*(L)$) denote the complement of a standard neighbourhood of $L_+^*$ (respectively, $L_-^*$) in $(Y_{+1}(L),\xi_{+1}(L))$.
Then $(X_+^*(L),\xi_{+1}(L))$ and $(X_-^*(L),\xi_{+1}(L))$ are overtwisted. Since the result of any positive contact surgery along $L$ in $(Y,\xi)$ contains either
$(X_+^*(L),\xi_{+1}(L))$ or $(X_-^*(L),\xi_{+1}(L))$ (see \cite[Section 4]{c}), it is overtwisted.
\end{proof}

We give some applications of Theorem~\ref{Theorem:Main3}. In practice, the most difficult part is to find a rational Seifert surface.

\begin{corollary}\label{Cor2}
Let $L_{1}\cup L_{2}\subset (S^{3},\xi_{st})$ be an oriented Legendrian link with two components which has a front projection depicted at the top of Figure~\ref{figure:lk1}.  If $$tb(L_2)\neq 1,\, tb(L_1)+\frac{1}{1-tb(L_2)}<-1$$ and $$|rot(L_1)+\frac{rot(L_2)}{1-tb(L_2)}|>2g_{1}+\frac{2g_{2}-1}{|1-tb(L_2)|}+tb(L_1)+\frac{1}{1-tb(L_2)},$$ where $g_i$ is the genus of $L_i$ for $i=1,2$, then  $(S^3(L_{1}^{+}\cup L_{2}^{-}),\xi_{st}(L_1^+\cup L_2^-))$ is overtwisted.
\end{corollary}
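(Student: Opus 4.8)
The plan is to deduce this from Theorem~\ref{Theorem:Main3}(1) applied to the Legendrian knot $L=L_1$ sitting inside the contact $3$--manifold $(Y,\xi)=(S^3(L_2^-),\xi_{st}(L_2^-))$. First I would note that contact $(-1)$--surgery along $L_2$ realizes smooth surgery with coefficient $tb(L_2)-1$; since $tb(L_2)\neq 1$ this coefficient is nonzero, so $Y$ is a rational homology $3$--sphere and $(Y,\xi)$ is a contact rational homology $3$--sphere. Moreover contact $(+1)$--surgery along $L$ in $(Y,\xi)$ reproduces $(S^3(L_1^+\cup L_2^-),\xi_{st}(L_1^+\cup L_2^-))$, so it suffices to verify the two hypotheses of Theorem~\ref{Theorem:Main3}(1) for $L$ in $(Y,\xi)$.

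Next I would compute the rational invariants of $L$. Reading the front projection at the top of Figure~\ref{figure:lk1} gives $lk(L_1,L_2)=1$, and, exactly as in the proof of Corollary~\ref{Cor1}, $(S^3(L_2^-),L_1)=(S^3(L_2^-),U)\sharp(S^3,L_1)$ where $U$ is a meridian of $L_2$. Applying Lemma~\ref{tb and rot} with the single surgery component being $L_2$ and the auxiliary knot being $L_1$, so that $M=(tb(L_2)-1)$ and $l_{01}=1$, yields $tb_{\mathbb{Q}}(L)=tb(L_1)+\frac{1}{1-tb(L_2)}$ and $rot_{\mathbb{Q}}(L)=rot(L_1)+\frac{rot(L_2)}{1-tb(L_2)}$, while the order of $[L]$ in $H_1(Y;\mathbb{Z})\cong\mathbb{Z}/(tb(L_2)-1)$ is $q=|1-tb(L_2)|$.

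The crux is to produce a rational Seifert surface $F$ for $L$ and compute $\chi(F)$. The key observation is that on $\partial V'$ the meridian $U=\mu_2$ meets the surgery meridian $(tb(L_2)-1)\mu_2+\lambda_2$ exactly once, so $U$ is a longitude of the surgery solid torus, hence is isotopic to the surgery dual of $L_2$. Consequently a genus $g_2$ Seifert surface $\Sigma_2$ for $L_2$ is itself a rational Seifert surface for $U$ of order $q$, with $\chi=1-2g_2$, since its boundary $\lambda_2$ wraps $q$ times around $U$. I would then assemble $F$ from the connected sum decomposition above: because the boundary of a rational Seifert surface covers $L$ exactly $q$ times, the $(S^3,L_1)$ summand contributes $q$ copies of a genus $g_1$ Seifert surface for $L_1$, each glued on by a boundary connected sum, giving $\chi(F)=(1-2g_2)-2g_1q=1-2g_2-2g_1q$. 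Verifying this Euler characteristic, and in particular the identification of $U$ with the surgery dual, is the main technical obstacle; the rest is bookkeeping.

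Finally I would check that the two displayed hypotheses of the corollary are exactly the hypotheses of Theorem~\ref{Theorem:Main3}(1). The condition $tb_{\mathbb{Q}}(L)<-1$ is precisely $tb(L_1)+\frac{1}{1-tb(L_2)}<-1$. Using $\frac{\chi(F)}{q}=\frac{1-2g_2}{|1-tb(L_2)|}-2g_1$, the inequality $tb_{\mathbb{Q}}(L)-|rot_{\mathbb{Q}}(L)|<\frac{\chi(F)}{q}$ rearranges to $|rot(L_1)+\frac{rot(L_2)}{1-tb(L_2)}|>2g_1+\frac{2g_2-1}{|1-tb(L_2)|}+tb(L_1)+\frac{1}{1-tb(L_2)}$, which is the third hypothesis. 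Theorem~\ref{Theorem:Main3}(1) then shows that $(Y_{+1}(L),\xi_{+1}(L))=(S^3(L_1^+\cup L_2^-),\xi_{st}(L_1^+\cup L_2^-))$ is overtwisted, as desired.
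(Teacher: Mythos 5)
Your proposal is correct and follows essentially the same route as the paper: reduce to Theorem~\ref{Theorem:Main3}(1) for the image $L$ of $L_1$ in $(S^3(L_2^-),\xi_{st}(L_2^-))$, compute $tb_{\mathbb{Q}}(L)$, $rot_{\mathbb{Q}}(L)$ and the order $q=|1-tb(L_2)|$ via Lemma~\ref{tb and rot}, and obtain the rational Seifert surface from the decomposition $(S^3(L_2^-),L)=(S^3(L_2^-),U)\sharp(S^3,L_1)$ with $U$ smoothly the surgery dual of $L_2$. The only difference is cosmetic: the paper cites \cite[(2.3.1)]{cg} for $\chi(F)=1-2g_2-2g_1\cdot|1-tb(L_2)|$, whereas you construct the surface directly by boundary-connect-summing $q$ copies of a genus-$g_1$ Seifert surface for $L_1$ onto the rational Seifert surface for $U$, which yields the same value.
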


\begin{proof}
Let $L$ be the image of $L_1$ in $(S^3(L_2^{-}),\xi_{st}(L_2^-))$. By Lemma~\ref{tb and rot}, $tb_{\mathbb{Q}}(L)=tb(L_1)+\frac{1}{1-tb(L_2)}$ and $rot_{\mathbb{Q}}(L)=rot(L_1)+\frac{rot(L_2)}{1-tb(L_2)}$. The order $q$ of $[L]$ in $H_1(S^3(L_{2}^{-});\mathbb{Z})$ is $|1-tb(L_2)|$. The Legendrian knot  $L$ in
$(S^3(L_{2}^{-}),\xi_{st}(L_2^-))$ can be seen as the connected sum of a Legendrian knot $U$ in $(S^3(L_{2}^{-}),\xi_{st}(L_2^-))$ and $L_{1}$ in $(S^3,\xi_{st})$ (see the bottom of Figure~\ref{figure:lk1}). The order of $[U]$ in $H_1(S^3(L_{2}^{-});\mathbb{Z})$ is also $|1-tb(L_2)|$. Since $U$ is smoothly isotopic to the core of the solid torus we glue in to get $S^3(L_{2}^{-})$, it has a rational Seifert surface in $S^3(L_2^-)$ with Euler characteristic $1-2g_2$. By \cite[(2.3.1)]{cg}, $L$ has a rational Seifert surface $F$ in $S^3(L_{2}^{-})$ with Euler characteristic $$1-2g_2+|1-tb(L_2)|\cdot(1-2g_1)-|1-tb(L_2)|=1-2g_{2}-2g_1\cdot|1-tb(L_2)|.$$
So we have $tb_{\mathbb{Q}}(L)<-1$ and $tb_{\mathbb{Q}}(L)-|rot_{\mathbb{Q}}(L)|<\frac{\chi(F)}{q}$. By Theorem~\ref{Theorem:Main3}, $(S^3(L_{1}^{+}\cup L_{2}^{-}),\xi_{st}(L_1^+\cup L_2^-))$ is overtwisted.
\end{proof}

\begin{figure}[htb]
\begin{overpic}
{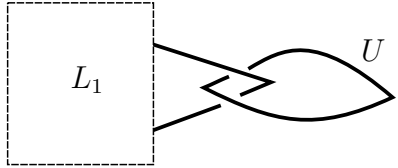}

\put(25, 30){$L_1$}
\put(135, 40){$U$}
\end{overpic}
\caption{A Legendrian link $L_{1}\cup U$.}
\label{figure:lk4}
\end{figure}

\begin{example}\label{Example3}
Let $L_{1}\cup U$ be the Legendrian link in $(S^3,\xi_{st})$ shown in Figure~\ref{figure:lk4}. By Corollary~\ref{Cor2}, if $tb(L_1)\leq -2$ and $|rot(L_1)|>tb(L_1)+2g_1$, where $g_1$ is the genus of $L_1$, then $(S^3(L_{1}^{+}\cup U^{-}),\xi_{st}(L_1^+\cup U^-))$ is overtwisted.
\end{example}

\begin{example}\label{Example31}
Let $L_{1}\cup U$ be the Legendrian link in $(S^3,\xi_{st})$ shown in Figure~\ref{figure:lk41}. Denote the image of $L_1$ in $(S^3(U^{-}),\xi_{st}(U^-))$ by $L$. Obviously, $L$ is  null-homologous in $S^3(U^{-})$. Moreover, $tb_{\mathbb{Q}}(L)=-6$, $|rot_{\mathbb{Q}}(L)|=1$ and there exists a Seifert surface $F$ with $\chi(F)=-3$ for $L$ in $S^3(U^-)$ by tubing operation. By Theorem~\ref{Theorem:Main3} or \cite[Theorem 1.1]{c},  $(S^3(L_{1}^{+}\cup U^{-}),\xi_{st}(L_1^+\cup U^-))$ is overtwisted.
\end{example}

\begin{figure}[htb]
\begin{overpic}
{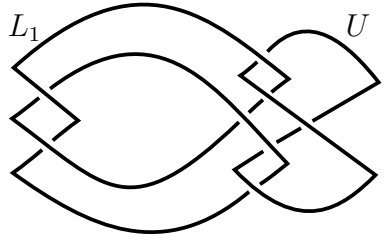}
\put(0, 75){$L_1$}
\put(128, 75){$U$}
\end{overpic}
\caption{A Legendrian link $L_{1}\cup U$ with linking number $0$.}
\label{figure:lk41}
\end{figure}

\begin{example}\label{Example4}
In \cite[Proposition 3.4]{bg}, Baker and Grigsby proved that any Legendrian knot $L$ in a universally tight contact lens space $(L(a,b),\xi_{UT})$ has a twisted toroidal front projection.  The invariant $tb_{\mathbb{Q}}(L)$ can be computed via such a front projection (see \cite[Proposition 6.8]{bg} and \cite[Corollary 3.3]{c1}). So we can apply Proposition~\ref{Prop:tb} for Legendrian knots in  $(L(a,b),\xi_{UT})$ conveniently.  In \cite[Proposition 3.6]{c1}, the invariant $rot_{\mathbb{Q}}(L)$ is also computed via the front projection. Possibly, one can construct a rational Seifert surface for $L$ via the front projection in a similar way as in \cite[Section 3.4]{oss}. Then we can apply Theorem~\ref{Theorem:Main3} for Legendrian knots in  $(L(a,b),\xi_{UT})$.
\end{example}

\section{Overtwisted contact (+1)-surgeries along Legendrian two-component links}

In this section, we prove Theorem~\ref{Theorem:Main2}.

\begin{proof}[Proof of Theorem~\ref{Theorem:Main2}]
We shall construct an explicit overtwisted disk in the contact 3-manifold $(S^3(\mathbb{L}^{+}),\xi_{st}(\mathbb{L}^{+}))$.
First, we construct a Legendrian knot $L'$ in $(S^3,\xi_{st})$ disjoint from the link $\mathbb{L}=L_{1}\cup L_{2}$. See Figure~\ref{figure:ot2}.
Outside the dashed box, $L'$ consists of two Legendrian arcs which are downward Legendrian push-offs of the parts of $L_{1}$ and $L_{2}$ outside the dashed box, respectively.
There is a thrice-punctured sphere $S$ shown in Figure~\ref{figure:ot2} whose boundary $\partial S=L_{1}\cup L_{2}\cup L'$.
\begin{figure}[htb]
\begin{overpic}
{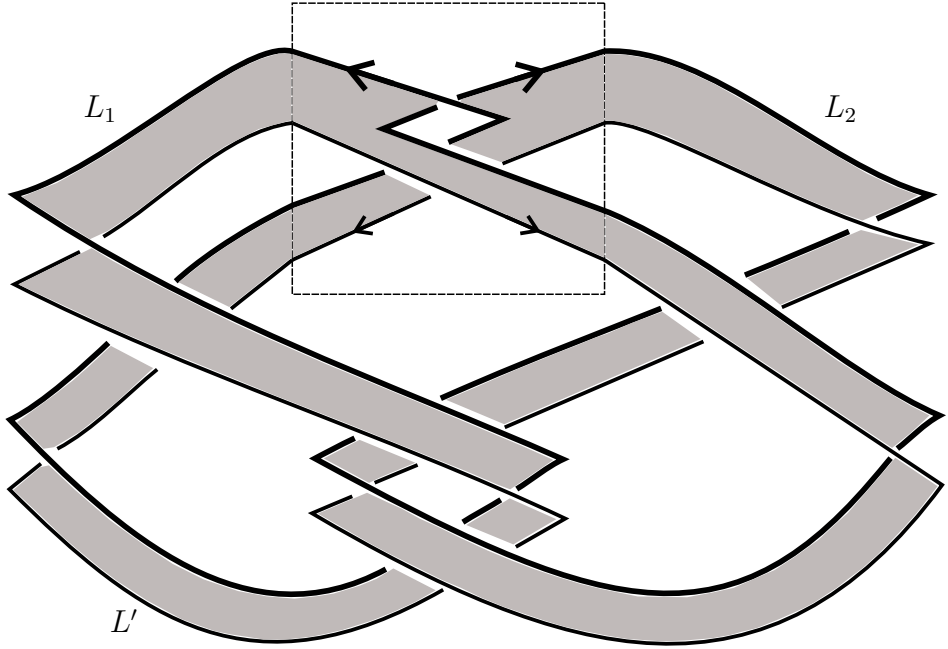}
\put(30, 200){$L_1$}
\put(310, 200){$L_2$}
\put(40, 5){$L'$}
\end{overpic}
\caption{The thin knot is $L'$. The shaded area is a thrice-punctured sphere. }
\label{figure:ot2}
\end{figure}

We orient $L_1,L_2$ and $L'$ as the boundary of $S$. Suppose the linking number of $L_1$ and $L_2$ is $l$.

\begin{lemma}  \label{lemma:tb(L')}
$tb(L')=tb(L_1)+tb(L_2)+2(l+1)$.
\end{lemma}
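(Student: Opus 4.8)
The plan is to compute $tb(L')$ directly from its front projection, using the standard identity $tb(L')=w(L')-\tfrac{1}{2}c(L')$ in $(S^3,\xi_{st})$, where $w(L')$ is the writhe and $c(L')$ the number of cusps of the front. The construction in Figure~\ref{figure:ot2} is tailored for exactly this: outside the dashed box $L'$ is a \emph{downward Legendrian push-off} of $L_1$ and $L_2$, so there its front is a parallel shifted copy of the fronts of $L_1$ and $L_2$, and all of the nontrivial combinatorics is concentrated inside the box. Before counting anything I would first fix orientations from the requirement $\partial S=L_1\cup L_2\cup L'$ for the thrice-punctured sphere $S$, since this is what determines the relative orientation of the two push-off arcs and hence the signs of the crossings between them.

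Next I would split the contributions to $w(L')$ and $c(L')$ into three groups: (i) the two push-off arcs, (ii) the crossings between these two arcs, and (iii) the connecting piece inside the box. For (i), a push-off reproduces every self-crossing and every cusp of $L_i$ with the same sign, so the arc modeled on $L_i$ contributes precisely $w(L_i)-\tfrac12 c(L_i)=tb(L_i)$, giving the term $tb(L_1)+tb(L_2)$. For (ii), outside the box the two arcs are parallel copies of $L_1$ and $L_2$, so their mutual crossings are in (signed) bijection with the crossings between $L_1$ and $L_2$, whose signed total is $2\,lk(L_1,L_2)=2l$; this yields the term $2l$. The remaining constant $+2$ must then come entirely from the turnbacks and crossings that $L'$ acquires inside the box when the two arcs are joined into a single closed curve.

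The main obstacle is precisely the bookkeeping in step (iii): one must check that the specific box configuration contributes exactly $+2$ after the half-cusp correction, and that the orientations induced as $\partial S$ make the inter-strand crossings count as $+2l$ rather than $-2l$. I would carry this out by reading off the box crossings and cusps directly from Figure~\ref{figure:ot2}. As a consistency check, and to make the orientation conventions transparent, I would also adopt the surface viewpoint: $S$ exhibits $L'$ as a band between the full Legendrian push-offs $L_1^+$ and $L_2^+$ (with $[L']=-[L_1]-[L_2]$ in $H_1(S)$ and $tb(L_i^+)=tb(L_i)$), so that the linking contributes $2l$ and the band together with the turnbacks accounts for the extra constant, reproducing $tb(L')=tb(L_1)+tb(L_2)+2(l+1)$. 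I expect the figure-dependent crossing and cusp count in the box to be the only genuinely delicate point; everything else is routine.
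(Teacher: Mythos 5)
Your overall strategy---computing $tb(L')=w(L')-\tfrac{1}{2}c(L')$ from the front projection, splitting the count into the two push-off arcs, the inter-arc crossings, and the piece inside the box---is the same as the paper's, but two of your three counting claims are false for the actual configuration, and the verification you defer to step (iii) would not return the value you expect. What you are missing is that in Figure~\ref{figure:ot} the components $L_1$ and $L_2$ are tangled \emph{inside} the dashed box: with the orientations induced as $\partial S$ they clasp there with two negative crossings, and the arcs of $L_1\cup L_2$ inside the box carry two cusps between them. Hence in your step (i), each arc of $L'$ is a push-off only of the part of $L_i$ \emph{outside} the box, so it does not reproduce the cusps of $L_i$ lying inside the box; and in your step (ii), the claimed signed bijection between inter-arc crossings and $L_1$--$L_2$ crossings fails exactly at the two clasp crossings, which are not reproduced (inside the box the arcs of $L'$ are joined to each other instead of following $L_1$ and $L_2$). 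The paper's count reflects this: the self-crossings of $L'$ outside the box contribute $w(L_1)+w(L_2)+2(l+1)$ --- the inter-arc crossings give $2l-(-2)=2(l+1)$, not $2l$ --- while inside the box $L'$ has a single negative self-crossing contributing $-1$ and no cusps, and $c(L')=c(L_1)+c(L_2)-2$; assembling these gives $tb(L')=tb(L_1)+tb(L_2)+2(l+1)$.

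As a consequence, the check you postpone to step (iii), as you describe it (``the turnbacks and crossings that $L'$ acquires inside the box''), would yield $-1$ (one negative crossing, no cusps), not $+2$; combined with your (i) and (ii) this gives $tb(L_1)+tb(L_2)+2l-1$, contradicting the statement, so the proposal as written does not close. Your expected $+2$ is correct only as the \emph{total} discrepancy between your idealized count and the true one, and it decomposes as $+2$ (the two negative clasp crossings you counted in $2l$ but which are absent from $L'$) $-1$ (the new self-crossing of $L'$ in the box) $+1$ (the half-cusp correction for the two cusps you counted but which are absent from $L'$). The arithmetic can therefore be repaired, but only after retracting the ``reproduction/bijection'' claims in (i) and (ii); the genuinely delicate point is not the box count per se but the fact that the configuration forces $L_1$ and $L_2$ to cross and to have cusps inside the box, which is precisely what your decomposition assumes away.
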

\begin{proof} The proof is similar to that of \cite[Lemma 6.1]{dlw}.  The number of cusps of $L'$ is $c(L')=c(L_1)+c(L_2)-2$. The writhe of $L'$ is $w(L')=w(L_1)+w(L_2)+2(l+1)-1$, where the self-crossings of $L'$ outside the dashed box contribute $w(L_1)+w(L_2)+2(l+1)$ to $w(L')$, and the self-crossing of $L'$ inside the dashed box contributes $-1$ to $w(L')$.  So $tb(L')=w(L')-\frac{1}{2}c(L')=tb(L_1)+tb(L_2)+2(l+1)$.
\end{proof}

\begin{lemma} \label{lemma:SurfaceFraming} (1) For $i=1,2$, the framing of $L_{i}$ induced by $S$ is $tb(L_i)+1$ with respect to the Seifert surface framing of $L_i$. \\ (2) The framing of $L'$  induced by $S$  is $tb(L_1)+tb(L_2)+2(l+1)$ with respect to the Seifert surface framing of $L'$; that is, the framing of $L'$ induced by $S$ coincides with the contact framing of $L'$.
\end{lemma}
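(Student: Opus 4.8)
The plan is to compute the surface framings directly from the front projection data, following the same bookkeeping as in the proof of Lemma~\ref{lemma:tb(L')}. The framing that a Seifert (or more general spanning) surface induces on a boundary component is measured by comparing the surface-pushed-off curve with the Seifert-surface framing, and for a curve presented by a front projection this amounts to a count of writhe and cusps together with the interactions with the other boundary components of $S$. So first I would fix orientations on $L_1, L_2, L'$ as $\partial S$ and recall that the thrice-punctured sphere $S$ induces on each boundary component the framing given by the linking of that component with the push-off along $S$.

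For part (1), the key observation is that along $L_i$ the surface $S$ is a collar, and pushing $L_i$ off along $S$ produces a parallel copy whose linking number with $L_i$ records the $S$-framing. Because $S$ is constructed from the standard annular neighborhoods in the front together with the band through the dashed box, the push-off differs from the contact push-off by exactly the linking contribution coming from the extra boundary component $L'$ (a downward push-off) and from $L_{3-i}$; I would tally these contributions to see that the $S$-framing of $L_i$ equals $tb(L_i)+1$ relative to the Seifert framing. The arithmetic here is the same local computation used for $c(L')$ and $w(L')$ in the previous lemma, just applied to $L_i$ rather than $L'$.

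For part (2), I would use the additivity of the framing around $\partial S$: the total surface framing summed over the three boundary components is constrained by the fact that $S$ has Euler characteristic $\chi(S)=-1$, and more concretely by the relation $\mathrm{fr}_S(L') = \sum_{\text{components}} (\text{linking data})$. Alternatively, and more cleanly, I would push $L'$ off along $S$ and compute its linking with $L'$ directly from Figure~\ref{figure:ot2}, obtaining $tb(L_1)+tb(L_2)+2(l+1)$, which by Lemma~\ref{lemma:tb(L')} is exactly $tb(L')$. Since the contact framing of a Legendrian knot is the Seifert framing shifted by its Thurston--Bennequin invariant, the equality $\mathrm{fr}_S(L') = tb(L')$ is precisely the statement that the $S$-framing of $L'$ agrees with its contact framing.

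The main obstacle I expect is keeping the framing bookkeeping honest: one must correctly account for the contribution of the single self-crossing of $L'$ inside the dashed box (the $-1$ that already appeared in Lemma~\ref{lemma:tb(L')}) and for the mutual linking terms between $L'$ and $L_1, L_2$ when computing the push-off along $S$, so that the surface framing of $L'$ comes out to agree with $tb(L')$ rather than differing by a correction. Once part (2) is in hand, the coincidence of the $S$-framing with the contact framing of $L'$ is what will later allow $S$ to be capped off after the $(+1)$-surgery to produce the overtwisted disk, so the precise match (not merely up to a shift) is the crucial point to verify.
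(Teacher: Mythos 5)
Your proposal takes essentially the same route as the paper: the paper also computes the $S$-framing of each boundary component as the linking number with its push-off in the interior of $S$, carried out by a signed crossing count in the front projection (with the crossings outside the dashed box contributing $tb(L_1)+tb(L_2)+1+2(l+1)$ and the self-crossing inside the box contributing $-1$), and then identifies the resulting value for $L'$ with $tb(L')$ via Lemma~\ref{lemma:tb(L')} to conclude the $S$-framing equals the contact framing. Your primary computation for part (2) and your bookkeeping caveats match the paper's argument; the alternative Euler-characteristic route you sketch is unnecessary and is not what the paper uses.
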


\begin{proof} (1) For $i=1,2$, the framing of $L_{i}$ induced by $S$, with respect to the Seifert surface framing of $L_i$, is the linking number of $L_i$ and its push-off in the
interior of $S$. The verification is straightforward.\\
(2) Let $L'_0$ be the push-off of $L'$ in the interior of $S$. We compute the linking number $lk(L', L'_0)$ as the number of crossings where $L'_0$ crosses under $L'$, counted with sign.  The crossings outside the dashed box contribute $tb(L_1)+tb(L_2)+1+2(l+1)$ to $lk(L',L'_0)$. The crossing inside the dashed box contributes $-1$ to $lk(L',L'_0)$. So  $lk(L', L'_0)=tb(L_1)+tb(L_2)+2(l+1)$.
\end{proof}

By Lemma~\ref{lemma:SurfaceFraming}(1), after we perform contact $(+1)$-surgery along $L_{1}\cup L_{2}$, $S$ caps off to a disk with boundary $L'$. It follows from Lemma~\ref{lemma:SurfaceFraming}(2) that this disk is an overtwisted disk.
\end{proof}

Roger Casals provided an alternative proof of Theorem~\ref{Theorem:Main2}.

\begin{proof}[Alternative proof of Theorem~\ref{Theorem:Main2}]

\begin{figure}[htb]
\begin{overpic}
{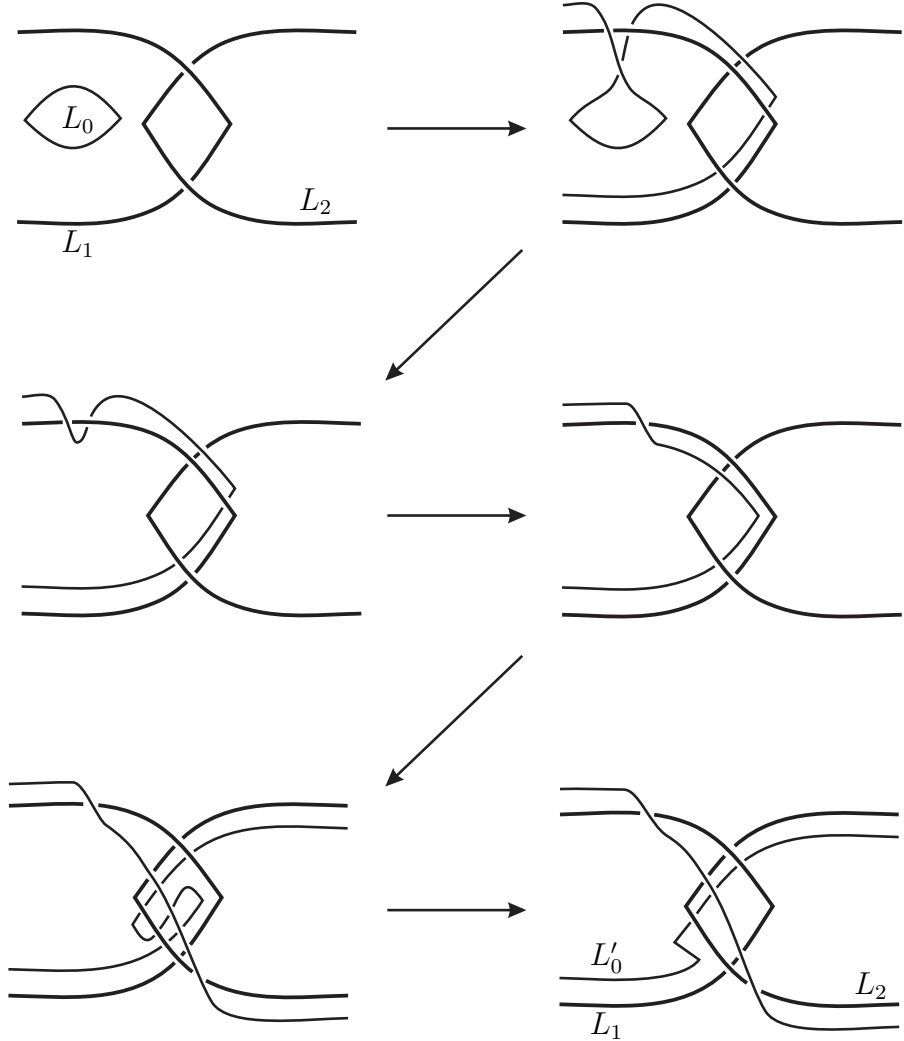}
\put(20, 342){$L_0$}
\put(20, 295){$L_1$}
\put(110, 311){$L_2$}
\put(220, -1){$L_1$}
\put(220, 25){$L_0'$}
\put(320, 14){$L_2$}
\end{overpic}
\caption{The thick curves present the two components of the link $\mathbb{L}=L_{1}\cup L_{2}$. The thin curves present $L_0$ and the resulting Legendrian knots after Legendrian Reidemeister moves and Kirby moves. The rest parts of the pictures are identical.}
\label{figure:ot3}
\end{figure}

We prove that the Legendrian unknot $L_0$ with $tb(L_0)=-1$ in $(S^3(\mathbb{L}^+),\xi_{st}(\mathbb{L}^+))$ in the first picture in the sequence of Figure~\ref{figure:ot3} destabilizes.
In fact, $L_0$ is Legendrian isotopic to the Legendrian knot $L_0'$ in $(S^3(\mathbb{L}^+),\xi_{st}(\mathbb{L}^+))$ in the final picture which contains an isolated stabilized arc.

In the first and fourth steps in the sequence of Figure~\ref{figure:ot3}, we perform Kirby moves of the second kind (see \cite[Proposition 1]{dg2}). The remaining moves are
Legendrian Reidemeister moves.
\end{proof}

\end{document}